\newcommand{\bbH}{{\mathbb{H}}}
\newcommand{\bbQ}{{\mathbb{Q}}}
\newcommand{\bbC}{{\mathbb{C}}}
\newcommand{\bbR}{{\mathbb{R}}}
\newcommand{\bbZ}{{\mathbb{Z}}}
\newcommand{\fraka}{{\mathfrak{a}}}
\numberwithin{equation}{section}
\newtheorem{Prop}[equation]{Proposition}
\newtheorem{Lem}[equation]{Lemma}
\newtheorem{Def}[equation]{Definition}
\newtheorem{Thm}[equation] {Theorem}
\newtheorem{Cor}[equation]{Corollary}
\title
[
On $m$--fold Holomorphic Differentials and Modular Forms
]
{On $m$--fold Holomorphic Differentials and Modular Forms} 
\author{Damir Miko\v c and Goran Mui\'c }
\address{Department of Teacher Education Studies in Gospi\' c,
  University of Zadar, dr. Ante Star\v cevi\' ca 12,
53000 Gospi\' c, Croatia}
\email{damir.mikoc@gmail.com}
\address{
Department of Mathematics, Faculty of Science, 
University of Zagreb,
Bijeni\v cka 30, 10000 Zagreb,
Croatia}
 \email{gmuic@math.hr}
\begin{document}

\begin{abstract}
  Let $\Gamma$ be the Fuchsian group of the first kind. For an even integer $m\ge 4$,  we study $m/2$--holomorphic differentials in terms of space of
  (holomorphic) cuspidal modular forms $S_m(\Gamma)$. We also give in depth study of Wronskians of cuspidal modular forms and their divisors.
  \end{abstract}
\subjclass[2000]{11F11}
\keywords{Wronskians, holomorphic differentials, cuspidal modular forms}
\thanks{The  author acknowledges Croatian Science Foundation grant IP-2018-01-3628.}
\maketitle

\section{Introduction}

Let $\Gamma$ be the Fuchsian group of the first kind  \cite[Section 1.7, page 28]{Miyake}.
 Examples of such groups are the important modular groups such as  $SL_2(\Bbb Z)$ and its congruence subgroups
 $\Gamma_0(N)$,  $\Gamma_1(N)$, and $\Gamma(N)$  \cite[Section 4.2]{Miyake}. Let $\Bbb H$ be the complex upper half-plane. 
The quotient $\Gamma\backslash \Bbb H$ can be compactified by adding a finite number of $\Gamma$-orbits of points in $\mathbb R\cup \{\infty\}$ called cusps of $\Gamma$ and we obtain a compact
Riemann surface which will be denoted by $\mathfrak{R}_\Gamma$. For $l\ge 1$, let $H^l\left(\mathfrak R_\Gamma\right)$ be the space of all holomorphic differentials on $\mathfrak{R}_\Gamma$ (see \cite{FK}, or
Section \ref{mhd}) in this paper).

Let $m\ge 2$ be an even integer.  Let
 $S_m(\Gamma)$ be the space of (holomorphic) cusp forms of weight $m$ (see Section \ref{prelim}). It is well--known that
$S_2(\Gamma)$ is naturally isomorphic to the vector space $H^1\left(\mathfrak R_\Gamma\right)$ (see \cite[Theorem 2.3.2]{Miyake}). This is employed on many instances in studying various properties of
modular curves (see for example \cite[Chapter 6]{ono}).  In this paper we study the generalization of this concept to the holomorphic differentials of higher order.

For an even integer $m\ge 4$, in general, the space $S_m(\Gamma)$ is too big to be isomorphic to $H^{m/2}\left(\mathfrak R_\Gamma\right)$ due to presence of cusps and elliptic
points. So, in general we define a
subspace 

$$
  S^H_m(\Gamma)=\left\{f\in S_m(\Gamma); \ \  \text{$f=0$ or $f$ satisfies (\ref{int-mhd-8})} \right\},
$$
 where
\begin{equation}\label{int-mhd-8}
\mathfrak c_f\ge \sum_{\mathfrak a\in \mathfrak
R_\Gamma, \ elliptic} \left[\frac{m}{2}(1-1/e_{\mathfrak  a})\right]\mathfrak a+ \left(\frac{m}{2} -1\right)\sum_{\mathfrak b\in \mathfrak
  R_\Gamma, \ cusp}  \mathfrak b.
\end{equation}
The integral divisor $\mathfrak c_f$ is defined in Lemma \ref{prelim-1} while the multiplicities $e_{\mathfrak  a}$ are defined in Section \ref{prelim}.
Now, we have the following result (see Section \ref{mhd-cont}):

\begin{Thm} The usual map $f\longmapsto \omega_f$  from the space of all cuspidal modular form into space of meromorphic differentials (see \cite[Theorem 2.3.3]{Miyake}) induces the isomorphism
  of  $S^H_m(\Gamma)$ onto $H^{m/2}\left(\mathfrak R_\Gamma\right)$. 
  \end{Thm}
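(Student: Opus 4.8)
The plan is to analyze the meromorphic $m/2$--differential $\omega_f=f(z)\,(dz)^{m/2}$ attached to $f\in S_m(\Gamma)$ and to show that $\omega_f$ is \emph{holomorphic} on $\mathfrak R_\Gamma$ precisely when $f$ satisfies (\ref{int-mhd-8}). By \cite[Theorem 2.3.3]{Miyake} the assignment $f\mapsto\omega_f$ is a linear injection into the space of meromorphic $m/2$--differentials: the transformation law $f(\gamma z)=(cz+d)^m f(z)$ together with $d(\gamma z)=(cz+d)^{-2}\,dz$ makes $\omega_f$ genuinely $\Gamma$--invariant, hence descends to $\mathfrak R_\Gamma$, and $\omega_f=0$ forces $f=0$ since $dz$ is nowhere zero. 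Because $f$ is holomorphic on $\bbH$ and $dz$ does not vanish there, $\omega_f$ is automatically holomorphic at every point of $\mathfrak R_\Gamma$ lying below a non--elliptic, non--cusp point; so the whole content is a local order computation at the elliptic points and cusps, after which I compare $\operatorname{div}(\omega_f)$ with the integral divisor $\mathfrak c_f$ of Lemma \ref{prelim-1}.

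First I would treat an elliptic point $\mathfrak a=\pi(z_0)$ of order $e=e_{\mathfrak a}$. Choosing a coordinate $s$ on $\bbH$ near $z_0$ on which the stabilizer acts by $s\mapsto\zeta s$ with $\zeta=e^{2\pi i/e}$, the coarse coordinate on $\mathfrak R_\Gamma$ is $t=s^e$, and substituting $dz=(\text{unit})\,ds$ and $ds=\tfrac1e t^{1/e-1}\,dt$ gives
\begin{equation*}
\ord_{\mathfrak a}(\omega_f)=\frac{\ord_{z_0}(f)-\tfrac m2(e-1)}{e}.
\end{equation*}
Invariance of $\omega_f$ forces this to be an integer, equivalently $\ord_{z_0}(f)\equiv-\tfrac m2\equiv\tfrac m2(e-1)\pmod e$, a congruence I would read off from the automorphy factor at the fixed point. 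This congruence is the crux: since $\ord_{z_0}(f)$ and $\tfrac m2(e-1)$ are congruent modulo $e$, the inequality $\ord_{z_0}(f)\ge\tfrac m2(e-1)$ (which is exactly $\ord_{\mathfrak a}(\omega_f)\ge 0$) is equivalent to the floored inequality in (\ref{int-mhd-8}), i.e.\ the comparison of the elliptic coefficient of $\mathfrak c_f$ with $\left[\tfrac m2(1-1/e_{\mathfrak a})\right]$. The cusp case is softer: in the parameter $q=e^{2\pi i z/h}$ one has $dz=\tfrac{h}{2\pi i}\tfrac{dq}{q}$, so $(dz)^{m/2}$ contributes a pole of order $m/2$ and $\ord_{\mathfrak b}(\omega_f)=\ord_q(f)-\tfrac m2$; matched against the cusp--normalization of $\mathfrak c_f$ this produces exactly the coefficient $\tfrac m2-1$. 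Assembling the three cases I obtain the identity $\operatorname{div}(\omega_f)=\mathfrak c_f-R$, where $R$ is the right--hand side of (\ref{int-mhd-8}); hence $\omega_f\in H^{m/2}(\mathfrak R_\Gamma)\iff\mathfrak c_f\ge R\iff f\in S^H_m(\Gamma)$, which proves the map is injective onto holomorphic differentials.

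It remains to prove surjectivity. Given $\omega\in H^{m/2}(\mathfrak R_\Gamma)$, I would pull it back along $\pi:\bbH\to\Gamma\backslash\bbH\subset\mathfrak R_\Gamma$. The pullback $\pi^*\omega$ is a holomorphic $m/2$--differential on $\bbH$, even over elliptic points, since $t=s^e$ yields $\pi^*\omega=(\text{unit})\,s^{(m/2)(e-1)}(ds)^{m/2}$; thus $\pi^*\omega=f(z)(dz)^{m/2}$ for a holomorphic $f$ on $\bbH$, and the $\Gamma$--invariance of a pullback gives $f(\gamma z)=(cz+d)^m f(z)$, so $f$ is weakly modular of weight $m$. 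Reading $\omega$ in the parameter $q$ at each cusp and inverting $\omega_f=\omega$ shows $f$ is holomorphic there with $\ord_q(f)\ge m/2\ge 1$, so $f$ vanishes at every cusp and $f\in S_m(\Gamma)$. Finally $\operatorname{div}(\omega_f)=\operatorname{div}(\omega)\ge 0$ gives $\mathfrak c_f\ge R$ by the identity above, whence $f\in S^H_m(\Gamma)$ and $\omega_f=\omega$; together with injectivity this establishes the isomorphism.

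I anticipate the elliptic--point analysis to be the main obstacle: everything turns on the congruence $\ord_{z_0}(f)\equiv\tfrac m2(e-1)\pmod{e_{\mathfrak a}}$, which is what upgrades the naive holomorphy bound into the floor expression $\left[\tfrac m2(1-1/e_{\mathfrak a})\right]$ of (\ref{int-mhd-8}); by contrast the cusp estimate and the pullback argument for surjectivity are routine. As a consistency check, for $m=2$ all floors and the coefficient $\tfrac m2-1$ vanish, so $S^H_2(\Gamma)=S_2(\Gamma)$ and the statement recovers the classical isomorphism $S_2(\Gamma)\cong H^1(\mathfrak R_\Gamma)$.
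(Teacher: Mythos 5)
Your proposal is correct and follows essentially the same route as the paper: the heart of both arguments is the divisor identity $\mathrm{div}(\omega_f)=\mathfrak c_f - R$ (the paper's (\ref{mhd-7})), from which $\omega_f\in H^{m/2}(\mathfrak R_\Gamma)$ if and only if (\ref{int-mhd-8}) holds. The only difference is one of packaging: you rederive the local order computations at elliptic points and cusps and the surjectivity by explicit pullback, whereas the paper imports these as the known isomorphism $\mathcal A_m(\Gamma)\cong D^{m/2}(\mathfrak R_\Gamma)$ and the order-shift formulas from \cite[Theorems 2.3.1 and 2.3.3]{Miyake}, combined with Lemma \ref{prelim-1} (vi)--(vii).
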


We study the space  $S^H_m(\Gamma)$ in detail in Section (see Section \ref{mhd-cont}).  The main results are contained in a very detailed Lemma \ref{mhd-9} and Theorem \ref{mhd-10}. 
We recall (see \cite[III.5.9]{FK} or Definition \ref{mhd-def} that  $\mathfrak a \in \mathfrak R_\Gamma$ is a $m/2$-Weierstrass point if
there exists a non--zero $\omega\in H^{m/2}\left(\mathfrak R_\Gamma\right)$ such that
$$
\nu_{\mathfrak a}(\omega)\ge \dim H^{m/2}\left(\mathfrak R_\Gamma\right).
$$

Equivalently \cite[Proposition III.5.10]{FK} , if 
$$
\nu_{\mathfrak a}\left(W\left(\omega_1, \ldots, \omega_t\right)\right)\ge 1,
$$
where $W\left(\omega_1, \ldots, \omega_t\right)$ is the Wronskian of holomorphic differential forms $\omega_1, \ldots, \omega_t$ (see Section \ref{mhd}). 

When $m=2$ we speak about classical Weierstrass points. So, $1$-Weierstrass points are simply Weierstrass points. Weierstrass points on modular curves are very-well studied
(see for example \cite[Chapter 6]{ono}, \cite{neeman}, \cite{Ogg}, \cite{pete-1}, \cite{pete-2}, \cite{roh}). Higher--order Weierstrass points has not been
not studied much (see for example \cite{neeman}).

\vskip .2in
The case $m\ge 4$ is more complex. We recall that $\mathfrak R_\Gamma$ is hyperelliptic if $g(\Gamma)\ge 2$, and there is a degree two map onto $\mathbb P^1$.
By general theory \cite[Chapter VII, Proposition 1.10]{Miranda}, if $g(\Gamma)=2$, then $\mathfrak R_\Gamma$ is hyperelliptic.
If $\mathfrak R_\Gamma$ is not hyperelliptic, then $\dim S_2(\Gamma)= g(\Gamma)\ge 3$, and  the regular  map
$\mathfrak R_\Gamma\longrightarrow \mathbb P^{g(\Gamma)-1}$ attached to a canonical divisor $K$  is an isomorphism  onto its image
\cite[Chapter VII, Proposition 2.1]{Miranda}.

\vskip .2in

Let $\Gamma=\Gamma_0(N)$, $N\ge 1$. Put $X_0(N)=\mathfrak R_{\Gamma_0(N)}$. We recall that  $g(\Gamma_0(N))\ge 2$ unless
$$
\begin{cases}
N\in\{1-10, 12, 13, 16, 18, 25\} \ \ \text{when $g(\Gamma_0(N))=0$, and}\\
N\in\{11, 14, 15, 17, 19-21, 24, 27, 32, 36, 49\} \ \ \text{when
	$g(\Gamma_0(N))=1$.}
\end{cases}
$$
Let $g(\Gamma_0(N))\ge 2$. Then,  we remark that Ogg \cite{Ogg} has determined all $X_0(N)$ which are hyperelliptic curves.
In view of Ogg's paper, we see that $X_0(N)$ is {\bf not hyperelliptic} for
$N\in \{34, 38, 42, 43, 44, 45, 51-58, 60-70\}$ or $N\ge 72$. This implies $g(\Gamma_0(N))\ge 3$.

\vskip .2in
We prove the following result (see Theorem \ref{cts-50000}) 

\vskip .2in
 \begin{Thm}\label{cts-50000-int}  Let $m\ge 4$ be an even integer. Assume that  $\mathfrak R_\Gamma$ is not hyperelliptic. Then, we have
   $$
   S_{m, 2}^H(\Gamma)= S_m^H(\Gamma),
   $$
   where we denote the  subspace  $S_{m, 2}^H(\Gamma)$  of $S_m^H(\Gamma)$  spanned by all monomials
    $$
   f_0^{\alpha_0}f_1^{\alpha_1}\cdots f_{g-1}^{\alpha_{g-1}}, \ \  \alpha_i\in \mathbb Z_{\ge 0}, \ \sum_{i=0}^{g-1} \alpha_i=\frac{m}{2}.
   $$
   Here $f_0, \ldots, f_{g-1}$, $g=g(\Gamma)$, is a basis of  $S_2(\Gamma)$
 \end{Thm}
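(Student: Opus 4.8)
The plan is to transport the entire statement to the compact Riemann surface $\mathfrak R_\Gamma$ by means of the isomorphism $f\mapsto\omega_f$ furnished by the first theorem above, and then to recognize the assertion as the classical fact that the canonical ring of a non--hyperelliptic curve is generated in degree one (Max Noether's theorem). First I would record that $f\mapsto\omega_f$ is multiplicative: locally a form $f$ of weight $2l$ corresponds to $\omega_f=f(z)\,(dz)^{l}$, so for $f_1,\dots,f_{m/2}\in S_2(\Gamma)$ one has
$$
\omega_{f_1\cdots f_{m/2}}=\omega_{f_1}\otimes\cdots\otimes\omega_{f_{m/2}}.
$$
Each $\omega_{f_i}\in H^1(\mathfrak R_\Gamma)$ is a genuine holomorphic $1$--differential (this is the case $m=2$, $S_2(\Gamma)\cong H^1(\mathfrak R_\Gamma)$), so the tensor product is a holomorphic $(m/2)$--differential; hence $f_1\cdots f_{m/2}\in S^H_m(\Gamma)$ and the inclusion $S^H_{m,2}(\Gamma)\subseteq S^H_m(\Gamma)$ is automatic. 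Writing $K$ for a canonical divisor, so that $H^1(\mathfrak R_\Gamma)=H^0(K)$ and $H^{m/2}(\mathfrak R_\Gamma)=H^0\!\left(\tfrac m2\,K\right)$, the isomorphism carries $S^H_{m,2}(\Gamma)$ precisely onto the image of the multiplication map
$$
\mu\colon \mathrm{Sym}^{m/2}\,H^1(\mathfrak R_\Gamma)\longrightarrow H^{m/2}(\mathfrak R_\Gamma),
$$
since the monomials in a basis of $S_2(\Gamma)$ are exactly the images of a basis of $\mathrm{Sym}^{m/2}H^1(\mathfrak R_\Gamma)$. Thus the theorem is \emph{equivalent} to the surjectivity of $\mu$.

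Next I would establish this surjectivity. Because $\mathfrak R_\Gamma$ is not hyperelliptic we have $g=g(\Gamma)\ge 3$ and, as recalled in the introduction, the canonical map embeds $\mathfrak R_\Gamma$ into $\mathbb P^{g-1}$. Surjectivity of $\mu$ for $n=m/2\ge 2$ is then exactly Max Noether's theorem: for a non--hyperelliptic curve the maps $\mathrm{Sym}^n H^0(K)\to H^0(nK)$ are onto for all $n\ge 1$, equivalently the canonical curve is projectively normal (see the classical theory of canonical curves, e.g. \cite{FK}, \cite{Miranda}). By Riemann--Roch one has $\dim H^n(\mathfrak R_\Gamma)=(2n-1)(g-1)$ for $n\ge 2$, and the passage from $n$ to $n+1$ reduces to the surjectivity of $H^1(\mathfrak R_\Gamma)\otimes H^n(\mathfrak R_\Gamma)\to H^{n+1}(\mathfrak R_\Gamma)$. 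For $n\ge 3$ this is immediate from the base--point--free pencil trick applied to a base--point--free pencil $V\subseteq H^1(\mathfrak R_\Gamma)$: the kernel of $V\otimes H^n(\mathfrak R_\Gamma)\to H^{n+1}(\mathfrak R_\Gamma)$ is $H^{n-1}(\mathfrak R_\Gamma)$, and the dimension count $2(2n-1)-(2n-3)=2n+1$ already fills $H^{n+1}(\mathfrak R_\Gamma)$.

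The main obstacle is the low--degree part, above all the base case $n=2$, namely the surjectivity of $\mathrm{Sym}^2 H^0(K)\to H^0(2K)$ (quadratic normality of the canonical curve). Here the crude pencil count is insufficient, and one must invoke the projective geometry of the canonical embedding in $\mathbb P^{g-1}$: surjectivity of $\mu$ in degree two is equivalent to the canonical curve lying on no more than the expected $(g-2)(g-3)/2$ independent quadrics. This is precisely the point at which the non--hyperelliptic hypothesis is indispensable, since for hyperelliptic curves $\mathrm{Sym}^2 H^0(K)\to H^0(2K)$ fails to be surjective. Once the degree--two case (and the closely related degree--three case, where the pencil count falls short by exactly one dimension) is in hand, the induction above completes the proof that $\mu$ is surjective, and therefore that $S^H_{m,2}(\Gamma)=S^H_m(\Gamma)$. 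Everything else — the multiplicativity of $f\mapsto\omega_f$, the identification of $S^H_{m,2}(\Gamma)$ with the image of $\mu$, and the inductive passage in high degree — is formal once the translation to the canonical curve has been made.
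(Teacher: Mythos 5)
Your proposal is correct and follows essentially the same route as the paper: both arguments identify $S^H_m(\Gamma)$ with $H^0\left(\frac{m}{2}K\right)$ (the paper does this by dividing by $F^{m/2}$ for a fixed $F\in S_2(\Gamma)$ and comparing dimensions via Riemann--Roch, you do it via the multiplicativity of $f\mapsto\omega_f$), and both then conclude by citing Max Noether's theorem on the surjectivity of $\mathrm{Sym}^{m/2}H^0(K)\to H^0\left(\frac{m}{2}K\right)$ for non--hyperelliptic curves. Your additional sketch of why Max Noether holds (pencil trick in high degree, quadratic normality as the essential base case) is extra exposition, but the hard step is invoked rather than proved in both treatments, so the proofs are in substance identical.
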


 \vskip .2in
 The criterion is given by the following corollary (see Corollary \ref{cts-500000}):

 \vskip .2in 
 \begin{Cor}\label{cts-50000-int}  Let $m\ge 4$ be an even integer. Assume that  $\mathfrak R_\Gamma$ is not hyperelliptic. Assume that $\mathfrak a_\infty$ is a cusp for $\Gamma$.
   Let us select a basis $f_0, \ldots, f_{g-1}$, $g=g(\Gamma)$, of $S_2(\Gamma)$. Compute $q$--expansions of all monomials
   $$
   f_0^{\alpha_0}f_1^{\alpha_1}\cdots f_{g-1}^{\alpha_{g-1}}, \ \  \alpha_i\in \mathbb Z_{\ge 0}, \ \sum_{i=0}^{g-1} \alpha_i=\frac{m}{2}.
   $$
   Then,  $\mathfrak a_\infty$ is {\bf not} a   $\frac{m}{2}$--Weierstrass point if and only if there exist a basis of the space of all such monomials,
   $F_1, \ldots F_t$, $t=\dim{S_m^H(\Gamma)}=(m-1)(g-1)$
   (see Lemma \ref{mhd-9} (v)), such that their $q$--expansions are of the form
      $$
      F_u=a_uq^{u+m/2-1}+ \text{higher order terms in $q$}, \ \ 1\le u\le t,
      $$
      where 
      $$
      a_u\in \mathbb C, \ \  a_u\neq 0.
      $$
\end{Cor}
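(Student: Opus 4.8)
The plan is to reduce the statement to the classical characterization of Weierstrass points through vanishing sequences, and then to translate that characterization, via the isomorphism $f\longmapsto \omega_f$ together with Theorem \ref{cts-50000}, into the stated condition on $q$--expansions.

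First I would record the local comparison at the cusp between the order of vanishing of $\omega_f$ and the order of the $q$--expansion of $f$. If $q$ denotes the local uniformizer at $\mathfrak a_\infty$ and $h$ the width, then $dz=\tfrac{h}{2\pi i}\tfrac{dq}{q}$ gives $(dz)^{m/2}=c\,q^{-m/2}(dq)^{m/2}$ with $c\neq 0$, so for $0\neq f\in S_m^H(\Gamma)$ one has
\[
\nu_{\mathfrak a_\infty}(\omega_f)=\mathrm{ord}_q(f)-\frac{m}{2}.
\]
This is precisely the cusp computation underlying Lemma \ref{mhd-9}, so I would cite it rather than reprove it. In particular $\omega_f$ vanishes to order exactly $u-1$ at $\mathfrak a_\infty$ if and only if $\mathrm{ord}_q(f)=u+\tfrac{m}{2}-1$, which is the normalization appearing in the statement.

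Next I would invoke the Wronskian gap formula. Let $t=\dim H^{m/2}(\mathfrak R_\Gamma)=(m-1)(g-1)$ (Lemma \ref{mhd-9}(v)), and choose a basis $\omega_1,\dots,\omega_t$ adapted to $\mathfrak a_\infty$, i.e.\ with strictly increasing orders $\gamma_1<\cdots<\gamma_t$. Then
\[
\nu_{\mathfrak a_\infty}\!\left(W(\omega_1,\dots,\omega_t)\right)=\sum_{i=1}^{t}\bigl(\gamma_i-(i-1)\bigr),
\]
and every summand is $\ge 0$ because the $\gamma_i$ are distinct nonnegative integers. By the Wronskian criterion recalled before Definition \ref{mhd-def} (see \cite[Proposition III.5.10]{FK}), $\mathfrak a_\infty$ is an $\tfrac{m}{2}$--Weierstrass point exactly when this order is positive; hence $\mathfrak a_\infty$ is \emph{not} an $\tfrac{m}{2}$--Weierstrass point if and only if $\gamma_i=i-1$ for all $i$, that is, the vanishing sequence of $H^{m/2}(\mathfrak R_\Gamma)$ at $\mathfrak a_\infty$ is the full sequence $0,1,\dots,t-1$.

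Combining the two steps yields the corollary. For the ``only if'' direction I would transport the adapted basis $\omega_1,\dots,\omega_t$ through the isomorphism to forms $F_1,\dots,F_t\in S_m^H(\Gamma)$; by Theorem \ref{cts-50000} each $F_u$ lies in $S_{m,2}^H(\Gamma)$ and is therefore a linear combination of the listed monomials, and by the order relation above $\mathrm{ord}_q(F_u)=u+\tfrac{m}{2}-1$, so the $F_u$ have the asserted $q$--expansions and, having distinct leading orders, form a basis. For the converse, if such $F_u$ exist then $\nu_{\mathfrak a_\infty}(\omega_{F_u})=u-1$ realizes all of $0,1,\dots,t-1$ inside the $t$--dimensional space $H^{m/2}(\mathfrak R_\Gamma)$, forcing the vanishing sequence to be complete and $\nu_{\mathfrak a_\infty}(W)=0$. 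The remaining work is purely computational and is what the corollary is designed to exploit: from the $q$--expansions of the monomials one forms the coefficient matrix and row--reduces it to echelon form, producing a basis with distinct leading exponents; the whole content of the test is then whether those exponents are the consecutive values $\tfrac{m}{2},\dots,\tfrac{m}{2}+t-1$ or whether a gap occurs. I expect the only genuine subtlety to be the bookkeeping with the cusp width $h$ in the normalization of $\mathrm{ord}_q$, which is already fixed in Lemma \ref{mhd-9}.
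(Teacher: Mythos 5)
Your argument is correct and follows essentially the paper's own route: the paper obtains this corollary simply by combining Theorem \ref{cts-50000} (so that the listed monomials span all of $S_m^H(\Gamma)$ and a basis of their span is a basis of $S_m^H(\Gamma)$) with Lemma \ref{mhd-9} (x), which is precisely the gap-free vanishing-sequence criterion at $\mathfrak a_\infty$ that you establish. The only difference is cosmetic: you re-derive that criterion from the Wronskian order formula $\nu_{\mathfrak a_\infty}\left(W(\omega_1,\dots,\omega_t)\right)=\sum_{i}\left(\gamma_i-(i-1)\right)$ together with \cite[Proposition III.5.10]{FK}, whereas the paper deduces it from Definition \ref{mhd-def} directly through parts (viii) and (ix) of Lemma \ref{mhd-9}; the two characterizations are equivalent and both are recorded in the paper.
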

This is useful for explicit computations in SAGE at least when $\Gamma=\Gamma_0(N)$.  We give examples in Section \ref{cts} (see
 Propositions \ref{cts-5001} and \ref{cts-5002}).   A different more theoretical criterion is contained in Theorem \ref{mhd-10}.
 
\vskip .2in 
Various other aspects of modular curves has
been studied in \cite{BKS}, \cite{bnmjk}, \cite{sgal}, \cite{ishida}, \cite{Muic}, \cite{MuicMi}, \cite{mshi} and \cite{yy}. 
We continue the approach presented in \cite{Muic1}, \cite{Muic2}, and \cite{MuicKodrnja}. In the proof of Theorem \ref{cts-50000}  we give an explicit construction of a
higher order canonical map
i.e., a map attached to divisor $\frac{m}{2}K$, where $K$ is a canonical divisor of $\mathfrak R_\Gamma$. The case $m=2$ is studied in depth in many papers (see for example \cite{sgal}). 

\vskip .2in 

In Section \ref{wron}  we deal with a generalization of the usual notion of the Wronskian  of cuspidal modular forms \cite{roh}, (\cite{ono}, 6.3.1),  (\cite{Muic}, the proof of
Theorem 4-5),
and (\cite{Muic2},
Lemma 4-1).  The main result of the section is Proposition  \ref{wron-2} which in the most important case has the following form:

\begin{Prop} Let $m\ge 1$. Then, for any sequence 
$f_1, \ldots, f_k\in M_m(\Gamma)$,
the Wronskian
$$
W\left(f_1, \ldots, f_k\right)(z)\overset{def}{=}\left|\begin{matrix}
f_1(z) &
\cdots & f_{k}(z) \\
\frac{df_1(z)}{dz} &
\cdots & \frac{df_{k}(z)}{dz} \\
&\cdots & \\
\frac{d^{k-1}f_1(z)}{dz^{k-1}} &
\cdots & \frac{d^{k-1}f_{k}(z)}{dz^{k-1}} \\
\end{matrix}\right|
$$
is a cuspidal modular form in $S_{k(m+k-1)}(\Gamma)$ if $k\ge 2$. If $f_1, \ldots, f_k$ are linearly independent,
then  $W\left(f_1, \ldots, f_k\right)\neq 0$.   
\end{Prop}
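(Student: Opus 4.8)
The plan is to establish the three assertions in turn: that $W(f_1,\dots,f_k)$ satisfies the weight $k(m+k-1)$ automorphy law, that it is holomorphic on $\bbH$ and vanishes at every cusp (so that it lies in $S_{k(m+k-1)}(\Gamma)$ once $k\ge 2$), and finally that it does not vanish identically when the $f_i$ are linearly independent.

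For the modularity I would rely on two factorization properties of the derivative matrix $\bigl(\tfrac{d^{j-1}f_i}{dz^{j-1}}\bigr)_{1\le i,j\le k}$. First, by the Leibniz rule this matrix for the functions $hf_i$ equals a lower-triangular matrix with constant diagonal $h$ times the derivative matrix for the $f_i$, which gives the scaling identity $W(hf_1,\dots,hf_k)=h^{k}\,W(f_1,\dots,f_k)$. Second, the change-of-variables rule: if $w=\phi(z)$ and $F_i(z)=f_i(\phi(z))$, then the matrix of $z$-derivatives of the $F_i$ factors as a lower-triangular connection matrix with diagonal $\bigl((\phi')^0,\dots,(\phi')^{k-1}\bigr)$ times the matrix of $w$-derivatives of the $f_i$ evaluated at $\phi(z)$; taking determinants yields $W_z(F_1,\dots,F_k)(z)=\phi'(z)^{k(k-1)/2}\,W_w(f_1,\dots,f_k)(\phi(z))$. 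Applying this with $\phi=\gamma\in\Gamma$, so $\phi'(z)=(cz+d)^{-2}$, and combining it with $f_i(\gamma z)=(cz+d)^{m}f_i(z)$ and the scaling identity for $h=(cz+d)^{m}$, the two evaluations of $W_z(f_1\circ\gamma,\dots,f_k\circ\gamma)(z)$ must agree, forcing
$$
W(f_1,\dots,f_k)(\gamma z)=(cz+d)^{\,km+k(k-1)}\,W(f_1,\dots,f_k)(z)=(cz+d)^{\,k(m+k-1)}\,W(f_1,\dots,f_k)(z).
$$

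Holomorphy on $\bbH$ is immediate, since each entry of the Wronskian matrix is holomorphic and the determinant is a polynomial in these entries. For cuspidality I would conjugate a given cusp to $\infty$ of width $h$, pass to $q=e^{2\pi i z/h}$, and write $\tfrac{d}{dz}=\tfrac{2\pi i}{h}\,D$ with $D=q\tfrac{d}{dq}$; pulling the constants out of the successive rows gives $W=(2\pi i/h)^{k(k-1)/2}\det\bigl(D^{\,j-1}f_i\bigr)$. Since each $f_i$ has a $q$-expansion involving only non-negative powers and $D$ sends $q^{n}$ to $n\,q^{n}$, every entry $D^{j-1}f_i$ is holomorphic at $q=0$, so $W$ is holomorphic at the cusp. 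Moreover $D$ annihilates constants, so at $q=0$ every row with $j\ge 2$ vanishes and only the top row survives; because $k\ge 2$ the matrix then has a zero row, so its determinant vanishes at $q=0$. Hence the constant term of $W$ is zero at each cusp, and $W\in S_{k(m+k-1)}(\Gamma)$.

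For the final assertion I would invoke the classical fact that holomorphic functions on a connected domain are linearly dependent over $\bbC$ precisely when their Wronskian vanishes identically; the easy direction follows from a column relation inherited by all derivatives, and the converse uses analyticity on the connected set $\bbH$. Thus linear independence of $f_1,\dots,f_k$ yields $W(f_1,\dots,f_k)\neq 0$. The main obstacle is the modularity step: one must set up the two lower-triangular connection matrices, justify that they have the stated diagonals (constant $h$ in one case, the powers $(\phi')^{j-1}$ in the other), and then track the exponents of $(cz+d)$ so that the contribution $k(k-1)$ coming from the change of variables combines correctly with $km$ from the scaling identity to produce exactly $k(m+k-1)$; once this bookkeeping is in place, the holomorphy, cuspidality, and non-vanishing arguments are routine.
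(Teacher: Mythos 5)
Your proposal is correct and follows essentially the same route as the paper: your two triangular connection matrices (the Leibniz factorization for $hf_i$ and the chain-rule factorization for $f_i\circ\gamma$) compose to exactly the paper's Lemma \ref{wron-1}, and the cuspidality argument via $q$-expansions whose rows of index $j\ge 2$ have vanishing constant term, as well as the appeal to the classical Wronskian criterion for linear independence, are the same as in the paper's proof. The only point worth making explicit is that for a cusp other than $\infty$ you should apply your two identities to the scaling matrix $\sigma^{-1}$ (they hold for arbitrary M\"obius maps, not just $\gamma\in\Gamma$) to see that slashing by $\sigma^{-1}$ commutes with forming the Wronskian, which is what the paper's equation (\ref{wron-3}) accomplishes.
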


\vskip .2in
What is new and deep is the computation of the divisor of $W\left(f_1, \ldots, f_k\right)$ (see Section \ref{wron-cont}). The main results are Proposition \ref{wron-cont-2}
and Theorem  \ref{wron-6}. A substantial example has been given in Section \ref{lev0} in the case of $\Gamma=SL_2(\mathbb Z)$ (see Proposition \ref{lev0-4}).

\vskip .2in

We would  like to thank I. Kodrnja for her help with the SAGE system. Also we would like to thank M. Kazalicki and F. Najman for some useful discussions about modular
forms and curves in general.

\section{Preliminaries}\label{prelim}

In this section we recall necessary facts about modular forms and their divisors \cite{Miyake}.
We follow the exposition in (\cite{Muic2}, Section 2).

\vskip .2in

 Let $\mathbb H$ be the upper half--plane.
Then the group $SL_2(\bbR)$  acts on $\mathbb H$  as follows:
$$ g.z=\frac{az+b}{cz+d}, \ \ g=\left(\begin{matrix}a & b\\ c & d
\end{matrix}\right)\in SL_2(\bbR).
$$
We let $j(g, z)=cz+d$. The function $j$ satisfies the cocycle identity:
\begin{equation}\label{cocycle}
j(gg', z)=j(g, g'.z) j(g', z).
\end{equation}

Next, $SL_2(\bbR)$--invariant measure on $\mathbb H$ is defined by $dx dy
/y^2$, where the coordinates on $\mathbb H$ are written in a usual way 
$z=x+\sqrt{-1}y$, $y>0$. A discrete subgroup $\Gamma\subset
SL_2(\bbR)$ is called a Fuchsian group of the first kind if 
$$
\iint _{\Gamma \backslash \mathbb H} \frac{dxdy}{y^2}< \infty.
$$
Then, adding a finite number of points 
in $\bbR\cup \{\infty\}$ called cusps, $\cal F_\Gamma$ can be
compactified. In this way we obtain a compact Riemann surface 
$\mathfrak R_\Gamma$. One of the most important examples are the groups
$$
\Gamma_0(N)=\left\{\left(\begin{matrix}a & b \\ c & d\end{matrix}\right) \in SL_2(\mathbb Z); \ \
    c\equiv 0 \ (\mathrm{mod} \ N)\right\}, \ \ N\ge 1.
$$
We write $X_0(N)$ for    $\mathfrak R_{\Gamma_0(N)}$.

Let $\Gamma$ be a Fuchsian group of the first kind. 
 We consider the space $M_m(\Gamma)$
(resp., $S_m(\Gamma)$)  of all 
modular (resp., cuspidal) forms of weight $m$;  this is the space
of all holomorphic functions $f: \mathbb H\rightarrow \bbC$ 
such that $f(\gamma.z)=j(\gamma, z)^m f(z)$ 
($z\in \mathbb H$, $\gamma\in \Gamma$) which are holomorphic (resp., holomorphic and vanish) at
every cusp for $\Gamma$. We also need the following obvious property: for 
$f, g\in M_m(\Gamma, \chi)$, $g\neq 0$, the quotient $f/g$ is a meromorphic function on  $\mathfrak R_{\Gamma}$.

\vskip .2in 

Next, we recall from (\cite{Miyake}, 2.3) some notions related to the theory of divisors 
of modular forms of even weight $m\ge 2$ and state a preliminary result.

Let $m\ge 2$ be an even integer and $f\in M_{m}(\Gamma)-\{0\}$. Then,
$\nu_{z-\xi}(f)$ denotes the order of the holomorphic function $f$ at $\xi$.
For each $\gamma\in \Gamma$, the functional equation
$f(\gamma.z)=j(\gamma, z)^m f(z)$, $z\in \mathbb H$, shows that 
$\nu_{z-\xi}(f)=\nu_{z-\xi'}(f)$ where $\xi'=\gamma.\xi$. 
Also, if we let
$$
e_{\xi} =\# \left(\Gamma_{\xi}/\Gamma\cap \{\pm 1\}\right),
$$
then $e_{\xi}=e_{\xi'}$. The point $\xi\in \mathbb H$ is elliptic if $e_\xi>1$. Next, following (\cite{Miyake}, 2.3), we define
$$
\nu_\xi(f)=\nu_{z-\xi}(f)/e_{\xi}.
$$
Clearly, $\nu_{\xi}=\nu_{\xi'}$, and we may let 
$$
\nu_{\mathfrak a_\xi}(f)=\nu_\xi(f),
$$ 
where 
$$\text{$\mathfrak a_\xi\in \mathfrak R_\Gamma$ is the projection of $\xi$
to $\mathfrak R_\Gamma$,} 
$$
a notation we use throughout this paper.

If $x\in \bbR\cup \{\infty\}$ is a cusp for $\Gamma$, then we define 
$\nu_x(f)$ as follows. Let $\sigma\in SL_2(\bbR)$ such that
$\sigma.x=\infty$. We write 
$$
\{\pm 1\} \sigma \Gamma_{x}\sigma^{-1}= \{\pm 1\}\left\{\left(\begin{matrix}1 & lh'\\ 0 &
    1\end{matrix}\right); \ \ l\in \bbZ\right\},
$$
where $h'>0$. Then we write the Fourier expansion of $f$ at $x$ as follows:
$$
(f|_m \sigma^{-1})(\sigma.z)= \sum_{n=1}^\infty a_n e^{2\pi
  \sqrt{-1}n\sigma.z/h'}.
$$

We let 
$$
\nu_x(f)=l\ge 0,
$$
where $l$ is defined by $a_0=a_1=\cdots =a_{l-1}=0$, $a_l\neq 0$. One
easily see that this definition does not depend on $\sigma$. Also, 
if $x'=\gamma.x$, then
$\nu_{x'}(f)=\nu_{x}(f)$. Hence, if $\mathfrak b_x\in \mathfrak
R_\Gamma$ is a cusp corresponding to $x$, then we may define
$$
\nu_{\mathfrak b_x}=\nu_x(f). 
$$

Put
$$
\mathrm{div}{(f)}=\sum_{\mathfrak a\in \mathfrak
R_\Gamma} \nu_{\mathfrak a}(f) \mathfrak a \in \ \  \bbQ\otimes \mathrm{Div}(\mathfrak R_\Gamma),
$$
where $\mathrm{Div}(\mathfrak R_\Gamma)$ is the group of (integral) divisors on 
$\mathfrak R_\Gamma$.

Using (\cite{Miyake}, 2.3), this sum is finite i.e., $ \nu_{\mathfrak a}(f)\neq 0$
for only a finitely many points. We let 
$$
\mathrm{deg}(\mathrm{div}{(f)})=\sum_{\mathfrak a\in \mathfrak
R_\Gamma} \nu_{\mathfrak a}(f).
$$

Let $\mathfrak d_i\in \bbQ\otimes \mathrm{Div}(\mathfrak R_\Gamma)$, $i=1, 2$. Then we say 
that $\mathfrak d_1\ge \mathfrak d_2$ if their difference $\mathfrak d_1 - \mathfrak d_2$ 
belongs to $\mathrm{Div}(\mathfrak R_\Gamma)$ and is non--negative in the usual sense.

\begin{Lem}\label{prelim-1} Assume that $m\ge 2$ is an even integer. Assume that 
$f\in  M_m(\Gamma)$, $f\neq 0$. Let $t$ be the number of inequivalent cusps  for $\Gamma$.  Then we have the following:
\begin{itemize}

\item[(i)] For $\mathfrak a\in \mathfrak
R_\Gamma$, we have  $\nu_{\mathfrak a}(f) \ge 0$.

\item [(ii)]  For a cusp $\mathfrak a\in \mathfrak R_\Gamma$, we have that 
$\nu_{\mathfrak a}(f)\ge 0$ is an integer.

\item[(iii)] If  $\mathfrak a\in \mathfrak
R_\Gamma$ is not an elliptic point or a cusp, then $\nu_{\mathfrak a}(f)\ge 0$
is an integer.  If  $\mathfrak a\in \mathfrak
R_\Gamma$ is an elliptic point, then $\nu_{\mathfrak a}(f)-\frac{m}{2}(1-1/e_{\mathfrak a})$ is 
an integer.

\item[(iv)]Let $g(\Gamma)$ be the genus of $ \mathfrak R_\Gamma$. Then 
 \begin{align*}
\mathrm{deg}(\mathrm{div}{(f)})&= m(g(\Gamma)-1)+ \frac{m}{2}\left(t+ \sum_{\mathfrak a\in \mathfrak
  R_\Gamma, \ \ elliptic} (1-1/e_{\mathfrak a})\right)\\
&= \frac{m}{4\pi} \iint_{\Gamma \backslash \mathbb H} \frac{dxdy}{y^2}.
\end{align*}

\item[(v)] Let $[x]$ denote the largest integer $\le x$ for $x\in
  \bbR$.  Then

\begin{align*}
\dim S_m(\Gamma) &=
\begin{cases} (m-1)(g(\Gamma)-1)+(\frac{m}{2}-1)t+ \sum_{\substack{\mathfrak a\in \mathfrak
      R_\Gamma, \\ elliptic}} \left[\frac{m}{2}(1-1/e_{\mathfrak a})\right], \ \ \text{if $m\ge 4$,}\\
  g(\Gamma), \ \ \text{if $m=2$.}\\
  \end{cases}\\
\dim M_m(\Gamma)&=\begin{cases} \dim S_m(\Gamma)+t, \ \ \text{if $m\ge 4$, or $m=2$ and $t=0$,}\\
\dim S_m(\Gamma)+t-1=g(\Gamma)+t-1,\ \ \text{if $m=2$ and $t\ge 1$.}\\
\end{cases} 
\end{align*}

\item[(vi)] There exists an integral divisor $\mathfrak c'_f\ge 0$ of degree
  $$
  \begin{cases}
    \dim M_m(\Gamma)+ g(\Gamma)-1, \ \ \text{if $m\ge 4$, or $m=2$ and $t\ge 1$,}\\
    2(g(\Gamma)-1), \ \ \text{if $m=2$ and $t=0$}
  \end{cases}
  $$
  such that
\begin{align*}
\mathrm{div}{(f)}= & \mathfrak c'_f+ \sum_{\mathfrak a\in \mathfrak
R_\Gamma, \ \ elliptic} \left(\frac{m}{2}(1-1/e_{\mathfrak a}) -
\left[\frac{m}{2}(1-1/e_{\mathfrak
    a})\right]\right)\mathfrak a.
\end{align*}
\item[(vii)] Assume that $f\in S_m(\Gamma)$. Then, the  integral divisor defined by
 $ \mathfrak c_f\overset{def}{=}\mathfrak c'_f-
\sum_{\substack{\mathfrak b \in \mathfrak
R_\Gamma, \\ cusp}} \mathfrak b$ satisfies  $\mathfrak c_f\ge 0$  and its degree is given by
$$
  \begin{cases}
    \dim S_m(\Gamma)+ g(\Gamma)-1; \ \ \text{if $m\ge 4$,}\\
    2(g(\Gamma)-1); \ \ \text{if $m=2$.}
  \end{cases}
  $$
\end{itemize}
\end{Lem}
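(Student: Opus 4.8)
The plan is to reduce all seven parts to three standard ingredients on the compact Riemann surface $\mathfrak{R}_\Gamma$---a local analysis of $f$ at elliptic points and cusps, the valence (degree) formula, and Riemann--Roch---after which (vi) and (vii) become pure bookkeeping; throughout I would follow the treatment in (\cite{Miyake}, Section~2.3). Parts (i) and (ii) are immediate: $f$ is holomorphic on $\mathbb{H}$, so $\nu_{z-\xi}(f)\ge 0$ and hence $\nu_\xi(f)=\nu_{z-\xi}(f)/e_\xi\ge 0$, while at a cusp the Fourier expansion of $f$ begins at an index $l\ge 0$, an integer, giving both nonnegativity and integrality. At a point that is neither elliptic nor a cusp one has $e_\xi=1$, so $\nu_\xi(f)=\nu_{z-\xi}(f)\in\mathbb{Z}_{\ge0}$, settling the first half of (iii). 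The only genuine local content is the elliptic congruence. Here I would use the uniformizer $s=(z-\xi)/(z-\bar\xi)$, which vanishes to first order at $\xi$ and on which the stabilizer $\Gamma_\xi/(\Gamma\cap\{\pm1\})$ acts through a primitive $e_\xi$-th root of unity, so that $w=s^{e_\xi}$ is a genuine coordinate at $\mathfrak{a}_\xi\in\mathfrak{R}_\Gamma$. Since $dz=\frac{\xi-\bar\xi}{(1-s)^2}\,ds$ is a nonvanishing multiple of $ds$ near $\xi$, rewriting the $m/2$-fold differential $\omega_f=f\,(dz)^{m/2}$ in the coordinate $w$ produces a leading $w$-exponent equal to $(\nu_{z-\xi}(f)-\frac{m}{2}(e_\xi-1))/e_\xi$; this must be an integer for $\omega_f$ to descend to $\mathfrak{R}_\Gamma$, which is exactly the assertion that $\nu_\xi(f)-\frac{m}{2}(1-1/e_{\mathfrak{a}})\in\mathbb{Z}$.

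For (iv) I would reuse $\omega_f$ globally. The local computations give $\nu_{\mathfrak{a}}(\omega_f)=\nu_{\mathfrak{a}}(f)$ away from the special points, $\nu_{\mathfrak{a}}(\omega_f)=\nu_{\mathfrak{a}}(f)-\frac{m}{2}(1-1/e_{\mathfrak{a}})$ at an elliptic point, and, using $dz=\frac{h'}{2\pi\sqrt{-1}}\,dq/q$ at a cusp with $q=e^{2\pi\sqrt{-1}\sigma.z/h'}$, $\nu_{\mathfrak{b}}(\omega_f)=\nu_{\mathfrak{b}}(f)-\frac{m}{2}$. Summing these and using that a global meromorphic $m/2$-differential on a surface of genus $g(\Gamma)$ has total degree $\frac{m}{2}(2g(\Gamma)-2)=m(g(\Gamma)-1)$ yields the first displayed equality. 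The second equality is the Gauss--Bonnet area formula $\iint_{\Gamma\backslash\mathbb{H}}dx\,dy/y^2=2\pi\big(2g(\Gamma)-2+t+\sum_{\mathfrak{a}\ \mathrm{elliptic}}(1-1/e_{\mathfrak{a}})\big)$, multiplied by $m/(4\pi)$.

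Part (v) is Riemann--Roch applied to the identification furnished by $f\mapsto\omega_f$. By (i)--(iii), $M_m(\Gamma)$ corresponds to the meromorphic $m/2$-differentials $\omega$ with $\mathrm{div}(\omega)\ge -E$, where $E=\sum_{\mathfrak{a}\ \mathrm{elliptic}}[\frac{m}{2}(1-1/e_{\mathfrak{a}})]\mathfrak{a}+\frac{m}{2}\sum_{\mathfrak{b}\ \mathrm{cusp}}\mathfrak{b}$ encodes the largest permissible pole orders of $\omega_f$; fixing one such differential trivializes the space as $L(\frac{m}{2}K+E)$, and requiring vanishing at the cusps replaces the cusp coefficient $\frac{m}{2}$ by $\frac{m}{2}-1$, identifying $S_m(\Gamma)$ with $L(\frac{m}{2}K+E')$. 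For $m\ge4$ both divisors have degree exceeding $2g(\Gamma)-2$, so the obstruction term in Riemann--Roch vanishes and $\dim=\deg-g(\Gamma)+1$; the stated formulas then follow from the identity $m(g(\Gamma)-1)-g(\Gamma)+1=(m-1)(g(\Gamma)-1)$. The $m=2$ cases are exceptional because the degree equals $2g(\Gamma)-2$ (for $S_2$, where $L(K)$ has dimension $g(\Gamma)$) or $2g(\Gamma)-2+t$ (for $M_2$, where $t\ge1$ gives $g(\Gamma)+t-1$, reflecting the single residue relation), and here one uses $[1-1/e_{\mathfrak{a}}]=0$.

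Finally, (vi) and (vii) are bookkeeping. By (iii) the fractional part of $\nu_{\mathfrak{a}}(f)$ at an elliptic point is the fixed number $\frac{m}{2}(1-1/e_{\mathfrak{a}})-[\frac{m}{2}(1-1/e_{\mathfrak{a}})]$, independent of $f$, so subtracting these off makes $\mathfrak{c}'_f$ integral; it is $\ge0$ because $\nu_{\mathfrak{a}}(f)\ge0$ together with this fixed fractional part forces $\nu_{\mathfrak{a}}(f)$ to be at least that fractional part. Its degree is read off from (iv) by subtracting $\sum_{\mathfrak{a}\ \mathrm{elliptic}}(\frac{m}{2}(1-1/e_{\mathfrak{a}})-[\frac{m}{2}(1-1/e_{\mathfrak{a}})])$, and comparison with (v) gives $\deg\mathfrak{c}'_f=\dim M_m(\Gamma)+g(\Gamma)-1$ (and the two $m=2$ subcases). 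For (vii), a cusp form has $\nu_{\mathfrak{b}}(f)\ge1$ at every cusp, so $\mathfrak{c}_f=\mathfrak{c}'_f-\sum_{\mathfrak{b}\ \mathrm{cusp}}\mathfrak{b}$ remains integral and $\ge0$, its degree drops by $t$, and $\dim M_m(\Gamma)=\dim S_m(\Gamma)+t$ from (v) turns $\dim M_m(\Gamma)+g(\Gamma)-1$ into $\dim S_m(\Gamma)+g(\Gamma)-1$. I expect the main obstacle to be the elliptic local computation in (iii)---getting the root-of-unity action and the exponent count exactly right so the congruence $\nu_{z-\xi}(f)\equiv\frac{m}{2}(e_\xi-1)\pmod{e_\xi}$ falls out cleanly---together with the parallel need in (v) to check $\deg(\frac{m}{2}K+E)>2g(\Gamma)-2$, which is precisely what restricts the clean formulas to $m\ge4$ and forces the separate $m=2$ treatment.
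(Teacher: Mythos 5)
Your proposal is correct and follows essentially the same route as the paper: the paper simply cites the standard treatment in Miyake (Sections 2.3--2.5) for (i)--(v) and then obtains (vi) from (iii), (iv), (v) and (vii) from (vi) by exactly the bookkeeping you describe. Your write-up merely unwinds those citations --- the local uniformizer $w=s^{e_\xi}$ at elliptic points, the valence/Gauss--Bonnet formula, and Riemann--Roch --- so there is nothing methodologically different, and the details (in particular the congruence $\nu_{z-\xi}(f)\equiv\frac{m}{2}(e_\xi-1)\ \mathrm{mod}\ e_\xi$ and the degree comparisons in (vi), (vii)) check out.
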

\begin{proof} The claims (i)--(v) are standard (\cite{Miyake}, 2.3, 2.4, 2.5). The claim (vi) follows from (iii), 
(iv), and (v) (see Lemma 4-1 in \cite{Muic}). Finally, (vii) follows from (vi).
\end{proof}

\section{Holomorhic Differentials and $m$--Weierstrass Points on $\mathfrak R_\Gamma$}\label{mhd}

Let $\Gamma$ be a Fuchsian group of the first kind. We let $D^m\left(\mathfrak R_\Gamma\right)$ (resp., $H^m\left(\mathfrak R_\Gamma\right)$)be the space of meromorphic (resp., holomorphic)
differential of degree $m$ on $\mathfrak R_\Gamma$ for each $m\in \mathbb Z$. We recall that $D^0\left(\mathfrak R_\Gamma\right)=\mathbb C\left(\mathfrak R_\Gamma\right)$, and
$D^m\left(\mathfrak R_\Gamma\right)\neq 0$ for all other  $m\in \mathbb Z$. In fact, if we fix a non--zero $\omega \in  D^1\left(\mathfrak R_\Gamma\right)$, then
$D^m\left(\mathfrak R_\Gamma\right)=\mathbb C\left(\mathfrak R_\Gamma\right)\omega^n$. We have  the following:
\begin{equation}\label{mhd-1}
  \deg{\left(\mathrm{div}{(\omega)}\right)}= 2m(g(\Gamma)-1), \ \ \omega \in D^m\left(\mathfrak R_\Gamma\right),  \ \omega\neq 0.
\end{equation}

We shall be interested in the case $m\ge 1$, and in holomorphic differentials. We recall \cite[Proposition III.5.2]{FK} that 
\begin{equation}\label{mhd-2}
  \dim H^m\left(\mathfrak R_\Gamma\right)=
  \begin{cases}  0 \ \ &\text{if} \ \ m\ge 1, g(\Gamma)=0;\\
     g(\Gamma)    \ \ &\text{if} \ \ m=1, \ g(\Gamma)\ge 1;\\
   g(\Gamma)    \ \ & \text{if} \ \ m\ge 2, \ g(\Gamma)= 1;\\
     (2m-1)\left(g\left(\mathfrak R_\Gamma\right)-1\right)    \ \ &\text{if} \ \ m\ge 2, \ g(\Gamma)\ge 2.\\           
  \end{cases}
  \end{equation}

This follows easily from Riemann-Roch theorem. Recall that a canonical class $K$ is simply a divisior on any non--zero meromorphic form $\omega$ on $\mathfrak R_\Gamma$. Different choices of a
$\omega$ differ by a divisor of a non--zero function $f\in \mathbb C\left(\mathfrak R_\Gamma\right)$
$$
\mathrm{div}{(f\omega)}=\mathrm{div}{(f)}+ \mathrm{div}{(\omega)}.
$$
Different choices of $\omega$ have the same degree since $\deg{\left(\mathrm{div}{(f)}\right)}=0$.

For a divisor $\mathfrak a$, we let
$$
L(\mathfrak a)=\left\{f\in \mathbb C\left(\mathfrak R_\Gamma\right); \ \ f=0 \ \text{or} \ \mathrm{div}{(f)}+ \mathfrak a\ge 0\right\}.
$$
We have the following three facts:
\begin{itemize}
\item[(1)] for $\mathfrak a=0$, we have $L(\mathfrak a)=\mathbb C$;
\item[(2)] if $\deg{(\mathfrak a)}< 0$, then  $L(\mathfrak a)=0$;
  \item[(3)] the Riemann-Roch theorem: $\dim L(\mathfrak a)= \deg{(\mathfrak a)}-g(\Gamma)+1 + \dim L(K-\mathfrak a)$.
  \end{itemize}

Now, it is obvious that   $f\omega^m \in H^m\left(\mathfrak R_\Gamma\right)$ if and only if
$$
\mathrm{div}{(f\omega^m)}=\mathrm{div}{(f)}+m\mathrm{div}{(\omega)}= \mathrm{div}{(f)}+mK\ge 0.
$$
Equivalently, $f\in L(mK)$. Thus, we have that $\dim H^m\left(\mathfrak R_\Gamma\right)=  \dim L(mK)$. Finally, by the Riemann-Roch theorem, we have the following:
$$
\dim L(mK)=\deg{(mK)}-g(\Gamma)+1 + \dim L((1-m)K)=(2m-1)(g\left(\mathfrak R_\Gamma\right)-1)+ \dim L((1-m)K).
$$
Now, if $g(\Gamma)\ge 2 $, then  $\deg{(K)}=2(g(\Gamma)-1)>0$, and the claim easily follows from (1) and (2) above.
Next, assume that $g(\Gamma)=1$. If $\omega\in \dim H^1\left(\mathfrak R_\Gamma\right)$ s non--zero, then it has a degree zero.
Thus, it has no zeroes. This means that $\omega H^{l-1}\left(\mathfrak R_\Gamma\right)=H^l\left(\mathfrak R_\Gamma\right)$ for all $l\in \mathbb Z$. But since obviously $H^0\left(\mathfrak R_\Gamma\right)$ consists
of constants only, we obtain the claim. Finally, the case $g(\Gamma)=0$ is obvious from (2) since the degree of $mK$ is $2m(g(\Gamma)-1)<0$ for all $m\ge 1$.

\vskip .2in

Assume that  $g(\Gamma)\ge 1$ and $m\ge 1$. Then,  $\dim H^m\left(\mathfrak R_\Gamma\right)\neq 0$. Let $t= \dim H^m\left(\mathfrak R_\Gamma\right)$. We fix the basis
$\omega_1, \ldots, \omega_t$ of  $H^m\left(\mathfrak R_\Gamma\right)$. Let $z$ be any local coordinate on $\mathfrak R_\Gamma$. Then, locally there exists unique holomorphic functions
$\varphi_1, \ldots, \varphi_t$ such that $\omega_i=\varphi_i \left(dz\right)^m$, for all $i$. Then, again locally,
we can consider the Wronskian $W_z$ defined by
\begin{equation}\label{mhd-3}
W_z\left(\omega_1, \ldots, \omega_t\right)\overset{def}{=}\left|\begin{matrix}
\varphi_1(z) &
\cdots & \varphi_{t}(z) \\
\frac{d\varphi_1(z)}{dz} &
\cdots & \frac{d\varphi_{t}(z)}{dz} \\
&\cdots & \\
\frac{d^{t-1}\varphi_1(z)}{dz^{k-1}} &
\cdots & \frac{d^{t-1}\varphi_{t}(z)}{dz^{t-1}} \\
\end{matrix}\right|. 
\end{equation}

As proved in \cite[Proposition III.5.10]{FK}, collection of all
\begin{equation}\label{mhd-4}
W_z\left(\omega_1, \ldots, \omega_t\right)\left(dz\right)^{\frac{t}{2}\left(2m-1+t\right)} ,
\end{equation}
defines a non--zero holomorphic differential form
$$
W\left(\omega_1, \ldots, \omega_t\right)\in H^{\frac{t}{2}\left(2m-1+t\right)}\left(\mathfrak R_\Gamma\right).
$$
We call this form the Wronskian of the basis $\omega_1, \ldots, \omega_t$. It is obvious that a different choice of a basis of
$H^m\left(\mathfrak R_\Gamma\right)$ results in a Wronskian which differ from $W\left(\omega_1, \ldots, \omega_t\right)$ by a multiplication by
a non--zero complex number. Also,  the degree is given by
\begin{equation}\label{mhd-5}
\deg{\left(\mathrm{div}{\left(W\left(\omega_1, \ldots, \omega_t\right)\right)}\right)}= t\left(2m-1+t\right)(g\left(\mathfrak R_\Gamma\right)-1).
\end{equation}

\vskip .2in
Following \cite[III.5.9]{FK}, we make the following definition:

\begin{Def}\label{mhd-def}  Let $m\ge 1$ be an integer. We say that $\mathfrak a \in \mathfrak R_\Gamma$ is a $m$-Weierstrass point if
there exists a non--zero $\omega\in H^m\left(\mathfrak R_\Gamma\right)$ such that
$$
\nu_{\mathfrak a}(\omega)\ge \dim H^m\left(\mathfrak R_\Gamma\right).
$$
\end{Def}
Equivalently \cite[Proposition III.5.10]{FK} , if 
$$
\nu_{\mathfrak a}\left(W\left(\omega_1, \ldots, \omega_t\right)\right)\ge 1.
$$
When $m=1$ we speak about classical Wierstrass points. So, $1$-Weierstrass points are simply Weierstrass points.

\section{Interpretation in Terms of Modular Forms} \label{mhd-cont}

In this section we give interpretation of results of Section \ref{mhd} in terms of modular forms.
Again, $\Gamma$ stand for a Fuschsian group of the first kind. Let $m\ge 2$ be an even integer. We consider the space
$\mathcal A_m(\Gamma)$ be the space of all all meromorphic functions $f: \mathbb H\rightarrow \bbC$ 
such that $f(\gamma.z)=j(\gamma, z)^m f(z)$  ($z\in \mathbb H$, $\gamma\in \Gamma$) which are meromorphic at
every cusp for $\Gamma$. By \cite[Theorem 2.3.1]{Miyake}, there exists isomorphism of vector spaces $\mathcal A_m(\Gamma)\longrightarrow D^{m/2}\left(\mathfrak R_\Gamma\right)$,
denoted by $f\longmapsto \omega_f$ such that the following holds (see Section \ref{prelim} for notation, and \cite[Theorem 2.3.3]{Miyake}):

\begin{equation}\label{mhd-5}
  \begin{aligned}
  &\nu_{\mathfrak a_\xi}(f)=\nu_{\mathfrak a_\xi}(\omega_f) +\frac{m}{2}\left(1- \frac{1}{e_{\mathfrak a_\xi}}\right) \ \ \text{if} \ \xi\in \mathbb H\\
    &\nu_{\mathfrak a}(f)=\nu_{\mathfrak a}(\omega_f) + \frac{m}{2} \ \ \text{for $\Gamma$---cusp $\mathfrak a$.}\\
    & \mathrm{div}{(f)}=\mathrm{div}{(\omega_f)}+\sum_{\mathfrak a \in \mathfrak R_\Gamma} \frac{m}{2}\left(1- \frac{1}{e_{\mathfrak a}}\right) \mathfrak a,
     \end{aligned}
\end{equation}
where $1/e_{\mathfrak a}=0$ if $\mathfrak a$ is a cusp. Let $f\in M_m(\Gamma)$. Then, combining Lemma \ref{prelim-1} (vi) and (\ref{mhd-5}), we obtain

\begin{equation}\label{mhd-6}
\mathrm{div}{(\omega_f)}=\mathfrak c' _f -\sum_{\mathfrak a\in \mathfrak
R_\Gamma, \ elliptic} \left[\frac{m}{2}(1-1/e_{\mathfrak  a})\right]\mathfrak a - \frac{m}{2}\sum_{\mathfrak b\in \mathfrak
R_\Gamma, \ cusp}  \mathfrak b.
\end{equation}
This shows that $\omega_f$ is holomorphic everywhere except maybe at cusps and elliptic points.
Moreover, if $f\in  S_m(\Gamma)$, then (see Lemma \ref{prelim-1} (vii))
\begin{equation}\label{mhd-7}
\mathrm{div}{(\omega_f)}=\mathfrak c_f -  \sum_{\mathfrak a\in \mathfrak
R_\Gamma, \ elliptic} \left[\frac{m}{2}(1-1/e_{\mathfrak  a})\right]\mathfrak a - \left(\frac{m}{2} -1\right)\sum_{\mathfrak b\in \mathfrak
R_\Gamma, \ cusp}  \mathfrak b.
\end{equation}

Next, we determine all $f\in M_m(\Gamma)$ such that $\omega_f\in H^{m/2}\left(\mathfrak R_\Gamma\right)$. From (\ref{mhd-6}) we see that
such $f$ must belong to $S_m(\Gamma)$, and from (\ref{mhd-7})
\begin{equation}\label{mhd-8}
\mathfrak c_f\ge \sum_{\mathfrak a\in \mathfrak
R_\Gamma, \ elliptic} \left[\frac{m}{2}(1-1/e_{\mathfrak  a})\right]\mathfrak a+ \left(\frac{m}{2} -1\right)\sum_{\mathfrak b\in \mathfrak
  R_\Gamma, \ cusp}  \mathfrak b.
\end{equation}

Now, we define the subspace of $S_m(\Gamma)$ by
  $$
  S^H_m(\Gamma)=\left\{f\in S_m(\Gamma); \ \  \text{$f=0$ or $f$ satisfies (\ref{mhd-8})} \right\}.
  $$
  It is mapped via $f\longmapsto \omega_f$ isomorphically onto $H^{m/2}\left(\mathfrak R_\Gamma\right)$.
  
  We remark that when $m=2$,  (\ref{mhd-8}) and reduces to obvious $\mathfrak c_f\ge 0$. Hence,
  $S^H_2(\Gamma)= S_2(\Gamma)$ recovering  the standard
  isomorphism of $S_2(\Gamma)$ and $H^1(\mathfrak R_\Gamma)$ (see \cite[Theorem 2.3.2]{Miyake}). We have the following result:

  \begin{Lem} \label{mhd-9} Assume that $m, n\ge 2$  are even integers. Let $\Gamma$ be a Fuchsian group of the first kind. Then, we have  the following:
    \begin{itemize}
    \item[(i)] $S^H_2(\Gamma)= S_2(\Gamma)$.
    \item[(ii)]  $S^H_m(\Gamma)$ is isomorphic to  $H^{m/2}\left(\mathfrak R_\Gamma\right)$.
    \item[(iii)]  $S^H_m(\Gamma)=\{0\}$ if $g(\Gamma)=0$.
    \item[(iv)] Assume that $g(\Gamma)=1$.  Let us write $S_2(\Gamma)=\mathbb C \cdot f$, for some non--zero cuspidal form $f$. Then, we have
      $S^H_m(\Gamma)= \mathbb C \cdot f^{m/2}$. 
    \item[(v)] $\dim{S^H_m(\Gamma)}= (m-1)\left(g(\Gamma)-1\right)$ if $g(\Gamma)\ge 2$.
    \item[(vi)] $S^H_m(\Gamma)\cdot S^H_n(\Gamma)\subset S^H_{m+n}(\Gamma)$.
       \item[(vii)] There are no  $m/2$--Weierstrass points on $\mathfrak R_\Gamma$ for $g(\Gamma)\in \{0, 1\}$. 
    \item[(viii)] Assume that $g(\Gamma)\ge 2$, and $\mathfrak a_\infty$ is a $\Gamma$-cusp.  Then,  $\mathfrak a_\infty$ is a 
      $\frac{m}{2}$--Weierstrass point if and only if there exists $f\in  S^H_m(\Gamma)$, $f\neq 0$, such that
      $$
      \mathfrak c'_f(\mathfrak a_\infty)\ge \begin{cases}   \frac{m}{2} + g(\Gamma) \ \ \text{if} \ \ m=2;\\
        \frac{m}{2} +  (m-1)(g(\Gamma)-1)  \ \ \text{if} \ \ m\ge 4.\\
        \end{cases}
      $$
    \item[(ix)] Assume that $g(\Gamma)\ge 1$, and $\mathfrak a_\infty$ is a $\Gamma$-cusp.  Then, there exists a basis $f_1, \ldots f_t$ of $S_m^H(\Gamma)$ such that
      their $q$--expansions are of the form
      $$
      f_u=a_uq^{i_u}+ \text{higher order terms in $q$}, \ \ 1\le u\le t,
      $$
      where 
      $$
      \frac{m}{2}\le i_1< i_2< \cdots < i_t \le \frac{m}{2}+ m\left(g(\Gamma)-1\right),
      $$
      and
      $$
      a_u\in \mathbb C, \ \  a_u\neq 0.
      $$
      \item[(x)] Assume that $g(\Gamma)\ge 1$, and $\mathfrak a_\infty$ is a $\Gamma$-cusp. Then,  $\mathfrak a_\infty$ is {\bf not} a 
      $\frac{m}{2}$--Weierstrass point if and only if there exists a basis $f_1, \ldots f_t$ of $S_m^H(\Gamma)$ such that
      their $q$--expansions are of the form
      $$
      f_u=a_uq^{u+m/2-1}+ \text{higher order terms in $q$}, \ \ 1\le u\le t,
      $$
      where 
      $$
      a_u\in \mathbb C, \ \  a_u\neq 0.
      $$ 
      \end{itemize}  
    \end{Lem}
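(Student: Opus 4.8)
The plan is to convert Definition~\ref{mhd-def} into a condition on the leading exponents of the $q$--expansions of the forms in $S^H_m(\Gamma)$, and then to finish with a short counting argument on a strictly increasing sequence of integers. Throughout write $t=\dim S^H_m(\Gamma)=\dim H^{m/2}\left(\mathfrak R_\Gamma\right)$ (using (ii)).

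First I would set up the dictionary between $q$--orders and orders of the associated differential. For $0\ne f\in S^H_m(\Gamma)$ the leading exponent of its $q$--expansion at $\mathfrak a_\infty$ is exactly $\nu_{\mathfrak a_\infty}(f)$, by the definition of $\nu_x$ at a cusp in Section~\ref{prelim}, while the cusp line of (\ref{mhd-5}) gives
$$
\nu_{\mathfrak a_\infty}(\omega_f)=\nu_{\mathfrak a_\infty}(f)-\frac m2 .
$$
Since $\omega_f\in H^{m/2}\left(\mathfrak R_\Gamma\right)$ is holomorphic we have $\nu_{\mathfrak a_\infty}(\omega_f)\ge 0$, so every non--zero $f\in S^H_m(\Gamma)$ has $q$--order at least $m/2$. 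Invoking (ix) I obtain an echelon basis whose attained orders form a strictly increasing sequence
$$
\frac m2\le i_1<i_2<\cdots<i_t ,
$$
and I record that $i_t=\max\left\{\nu_{\mathfrak a_\infty}(f):\ 0\ne f\in S^H_m(\Gamma)\right\}$: any non--zero form is a combination of the echelon basis, so its leading order is one of the $i_u$, the largest possible being $i_t$.

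Next I would restate the Weierstrass condition. Under the isomorphism $f\mapsto\omega_f$ of $S^H_m(\Gamma)$ onto $H^{m/2}\left(\mathfrak R_\Gamma\right)$, Definition~\ref{mhd-def} says $\mathfrak a_\infty$ is an $\tfrac m2$--Weierstrass point exactly when some non--zero $f$ has $\nu_{\mathfrak a_\infty}(\omega_f)\ge t$, i.e.\ $\nu_{\mathfrak a_\infty}(f)\ge t+\tfrac m2$; by the previous paragraph this is equivalent to $i_t\ge t+\tfrac m2$. Hence $\mathfrak a_\infty$ is \emph{not} a Weierstrass point precisely when $i_t\le t+\tfrac m2-1$.

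Finally comes the counting step. As the $i_u$ are $t$ distinct integers with $i_1\ge m/2$, one automatically has $i_t\ge i_1+(t-1)\ge \tfrac m2+t-1$. Comparing this with the non--Weierstrass inequality forces $i_t=\tfrac m2+t-1$; then the $t$ distinct integers $i_1<\cdots<i_t$ are confined to the interval $\left[\tfrac m2,\ \tfrac m2+t-1\right]$, which contains exactly $t$ integers, so $i_u=u+\tfrac m2-1$ for every $u$ and the echelon basis has the asserted shape. Conversely, a basis with $i_u=u+\tfrac m2-1$ yields $i_t=t+\tfrac m2-1<t+\tfrac m2$, so $\mathfrak a_\infty$ is not a Weierstrass point. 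I expect no genuine obstacle here; the only points needing care are the bookkeeping of the $m/2$--shift in (\ref{mhd-5}) and the observation that a single extremal form of order $i_t$ governs the Weierstrass property, which reduces the whole statement to the elementary fact that $t$ distinct integers bounded below by $m/2$ and above by $\tfrac m2+t-1$ must be consecutive.
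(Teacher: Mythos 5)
Your argument for part (x) is correct and is essentially the route the paper itself takes: the paper dismisses (x) with the one--line remark that it follows from (viii) and (ix), and what you have written is exactly the content of that remark, namely the translation $\nu_{\mathfrak a_\infty}(\omega_f)=\nu_{\mathfrak a_\infty}(f)-\tfrac m2$ coming from (\ref{mhd-5}), the observation that Definition \ref{mhd-def} amounts to $i_t\ge t+\tfrac m2$ because the top leading exponent of an echelon basis is the maximal order of vanishing over all non--zero elements, and the pigeonhole count forcing $i_u=u+\tfrac m2-1$ when that maximum is $\le t+\tfrac m2-1$. I see no error in this portion; it also quietly re--derives the substance of (viii) along the way, which is harmless.

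The genuine gap is one of coverage: the statement is the entire ten--part lemma, and your proposal proves only (x), explicitly taking (ii) and (ix) --- which are themselves assertions of the lemma --- as given, and saying nothing about (i), (iii)--(viii). The parts that require real work and are absent are the following. For (iii)--(v) one needs the dimension formula (\ref{mhd-2}) for $H^{m/2}\left(\mathfrak R_\Gamma\right)$ transported through the isomorphism of (ii). For (iv) one must show $\mathrm{div}{(\omega_f)}=\mathfrak c_f=0$ when $g(\Gamma)=1$ (degree count from Lemma \ref{prelim-1} (vii)) and then compute $\mathfrak c_{f^{m/2}}$ from (\ref{mhd-7}) to verify $f^{m/2}\in S^H_m(\Gamma)$. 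For (vi) one adds the divisor identities of Lemma \ref{prelim-1} (vi)--(vii) for $f$ and $g$ and observes that both defect terms on the right--hand side are $\ge 0$ by the definition of $S^H_m(\Gamma)$ and $S^H_n(\Gamma)$. Most importantly, (ix) --- on which your proof of (x) leans --- is itself non--trivial: the lower bound $i_1\ge\tfrac m2$ you do re--derive, but the upper bound $i_t\le\tfrac m2+m(g(\Gamma)-1)$ requires comparing $\deg{(\mathfrak c'_f)}=\dim M_m(\Gamma)+g(\Gamma)-1$ with the pointwise lower bounds $\mathfrak c'_f(\mathfrak a)\ge\left[\tfrac m2(1-1/e_{\mathfrak a})\right]$ at elliptic points and $\ge\tfrac m2$ at the cusps other than $\mathfrak a_\infty$. (For (x) alone this upper bound is not needed, so your part--(x) argument survives, but as a proof of the lemma as stated the proposal establishes only its final item.)
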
 
  \begin{proof} (i) and (ii) follow from above discussion. Next, using above discussion and (\ref{mhd-2}) we obtain
    $$
    \dim{S^H_m(\Gamma)} =\dim H^{m/2}\left(\mathfrak R_\Gamma\right)=\begin{cases}  0 \ \ &\text{if} \ \ m\ge 2, g(\Gamma)=0;\\
     g(\Gamma)    \ \ &\text{if} \ \ m=2, \ g(\Gamma)\ge 1;\\
   g(\Gamma)    \ \ & \text{if} \ \ m\ge 4, \ g(\Gamma)= 1;\\
     (m-1)\left(g(\Gamma)-1\right)    \ \ &\text{if} \ \ m\ge 4, \ g(\Gamma)\ge 2.\\           
    \end{cases}
    $$
    This immediately implies (iii) and (v). Next, assume that $g(\Gamma)=1$. Then, we see that $\dim{S^H_m(\Gamma)}\le 1$ for all even integers $m\ge 4$.
    It is well known that $f^{m/2} \in S_m(\Gamma)$. Next, (\ref{mhd-8}) for $m=2$ implies $\mathrm{div}{(\omega_f)}=\mathfrak c_f$. Also, the degree of $\mathfrak c_f$ is zero by
    Lemma \ref{prelim-1} (vii). Hence,
    $$
    \mathrm{div}{(\omega_f)}=\mathfrak c_f=0.
    $$
    Using \cite[Theorem 2.3.2]{Miyake}, we obtain
    $$
    \omega_{f^{m/2}}=\omega^{m/2}_f.
    $$
    Hence
    $$
    \mathrm{div}{(\omega_{f^{m/2}})}=\frac{m}{2} \mathrm{div}{(\omega_f)}=0.
    $$
    Then, applying (\ref{mhd-7}) with $f^{m/2}$ in place of $f$, we obtain
    $$
    \mathfrak c_{f^{m/2}} = \sum_{\mathfrak a\in \mathfrak
R_\Gamma, \ elliptic} \left[\frac{m}{2}(1-1/e_{\mathfrak  a})\right]\mathfrak a + \left(\frac{m}{2} -1\right)\sum_{\mathfrak b\in \mathfrak
      R_\Gamma, \ cusp}  \mathfrak b.
    $$
    This shows that $f^{m/2}\in S^H_m(\Gamma)$ proving (iv). Finally, (vi) follows from \cite[Theorem 2.3.1]{Miyake}. We can also see that directly as follows. Let
    $0\neq f\in  S^H_m(\Gamma)$ and $0\neq g\in  S^H_n(\Gamma)$. Then, $fg\in S_{m+n}(\Gamma)$ since  $f\in  S^H_m(\Gamma)\subset  S_m(\Gamma)$ and $g\in  S^H_n(\Gamma)\subset  S_n(\Gamma)$.
    We have the following:
    $$
    \mathrm{div}{(f\cdot g)}= \mathrm{div}{(f)}+\mathrm{div}{(g)}.
    $$
    Using Lemma \ref{prelim-1} (vi) we can rewrite this identity as follows:
    \begin{align*}
     &\mathfrak c'_{f\cdot g} - \sum_{\mathfrak a\in \mathfrak
       R_\Gamma, \ \ elliptic}  \left[\frac{m+n}{2}(1-1/e_{\mathfrak a})\right] \mathfrak a= \\
&\mathfrak c'_{f} - \sum_{\mathfrak a\in \mathfrak
  R_\Gamma, \ \ elliptic}  \left[\frac{m}{2}(1-1/e_{\mathfrak a})\right] \mathfrak a + \mathfrak c'_{g} - \sum_{\mathfrak a\in \mathfrak
  R_\Gamma, \ \ elliptic}  \left[\frac{n}{2}(1-1/e_{\mathfrak a})\right] \mathfrak a.
    \end{align*}
    By Lemma  \ref{prelim-1} (vii) we obtain:
\begin{align*}
     &\mathfrak c_{f\cdot g} - \sum_{\mathfrak a\in \mathfrak
       R_\Gamma, \ \ elliptic}  \left[\frac{m+n}{2}(1-1/e_{\mathfrak a})\right] \mathfrak a  -  \left(\frac{m+n}{2} -1\right)\sum_{\mathfrak b\in \mathfrak
  R_\Gamma, \ cusp}  \mathfrak b=\\
&\left(\mathfrak c_{f} - \sum_{\mathfrak a\in \mathfrak
    R_\Gamma, \ \ elliptic}  \left[\frac{m}{2}(1-1/e_{\mathfrak a})\right] \mathfrak a
 -  \left(\frac{m}{2} -1\right)\sum_{\mathfrak b\in \mathfrak
  R_\Gamma, \ cusp}  \mathfrak b\right)+\\
&\left(\mathfrak c_{g} - \sum_{\mathfrak a\in \mathfrak
   R_\Gamma, \ \ elliptic}  \left[\frac{n}{2}(1-1/e_{\mathfrak a})\right] \mathfrak a
  -  \left(\frac{n}{2} -1\right)\sum_{\mathfrak b\in \mathfrak
  R_\Gamma, \ cusp}  \mathfrak b\right).
    \end{align*}
Finally, (vi) follows applying (\ref{mhd-8})  since both terms on the right hand of equality  are $\ge 0$.

Next, (vii) follows immediately form the discussion in Section \ref{mhd}, and it is well--known. (viii) is a reinterpretation of
Definition \ref{mhd-def}. The details are left to the reader as an easy exercise.

Finally, (ix) and (x) in the case of $g(\Gamma)=1$ are obvious since by Lemma \ref{prelim-1}  we have $S_2(\Gamma)= \mathbb C \cdot f$ where
$$
\mathfrak c'_f=\mathfrak a_\infty + \sum_{\substack{\mathfrak b \in \mathfrak
    R_\Gamma, \ cusp\\
    \mathfrak b \neq \mathfrak a_\infty}} \mathfrak b.
$$

We prove  (ix) and (x) in the case of $g(\Gamma)\ge 2$. Let $f\in S_m^H(\Gamma)$, $f\neq 0$. Then, by the definition of $S_m^H(\Gamma)$, we obtain
\begin{equation} \label{mhd-11}
  \mathfrak c'_f(\mathfrak a_\infty)=1+ \mathfrak c_f(\mathfrak a_\infty)\ge 1+ \left(\frac{m}{2}-1\right)=\frac{m}{2}.
  \end{equation}
On the other hand, again by the definition of $S_m^H(\Gamma)$ (see (\ref{mhd-8})) and the fact that $\mathfrak c'_f\ge 0$, we obtain

\begin{align*}
  \deg{(\mathfrak c'_f)} & =\sum_{\mathfrak a \in \mathfrak R_\Gamma} \ \mathfrak c'_f(\mathfrak a) \ge \\
                        &\sum_{\mathfrak a\in \mathfrak R_\Gamma, \ elliptic}  \mathfrak c'_f(\mathfrak a) +  \sum_{\substack{\mathfrak b \in \mathfrak
      R_\Gamma, \ cusp\\    \mathfrak b \neq \mathfrak a_\infty}} \mathfrak c'_f(\mathfrak b) + \mathfrak c'_f(\mathfrak a_\infty)\ge\\
   &\sum_{\mathfrak a\in \mathfrak R_\Gamma, \ elliptic} \left[\frac{m}{2}(1-1/e_{\mathfrak  a})\right]+ \frac{m}{2} \left(t-1\right) + \mathfrak c'_f(\mathfrak a_\infty)
  \end{align*}
where $t$ is the number of inequivalent $\Gamma$--cusps. The degree $\deg{(\mathfrak c'_f)}$ is given by Lemma \ref{prelim-1} (vi)  

\begin{align*}
  \deg{(\mathfrak c'_f)} &=   \dim M_m(\Gamma)+ g(\Gamma)-1\\
  &=   \begin{cases} 2(g(\Gamma)-1)+t  \ \ \text{if} \ \ m=2;\\
                 m(g(\Gamma)-1)+ \frac{m}{2}t+ \sum_{\substack{\mathfrak a\in \mathfrak
        R_\Gamma, \\ elliptic}} \left[\frac{m}{2}(1-1/e_{\mathfrak a})\right] \ \ \text{if} \ \ m\ge 4.\\
    \end{cases}
\end{align*}
Combining with the previous inequality, we obtain
$$
\mathfrak c'_f(\mathfrak a_\infty) \le \frac{m}{2}+ m(g(\Gamma)-1) \ \ \text{if} \ \ m\ge 2.
$$
Having in mind (\ref{mhd-11}), the rest of (ix) has standard argument (see for example \cite[Lemma 4.3]{Muic}).  Finally, (x) follows (viii) and (ix). 
\end{proof}

\vskip .2in   
The criterion in Lemma \ref{mhd-9} (x) is a quite good criterion to check whether or not $\mathfrak a_\infty$ is a Weierstrass points (the case $m=2$) using computer systems such as SAGE
since we need just to list the basis. This case is well-known (see \cite[Definition 6.1]{ono}). This criterion has been used in practical computations in
combination with SAGE in \cite{MuicKodrnja} for $\Gamma=\Gamma_0(N)$.

\vskip .2in

But it is not good when $m\ge 4$, regarding the bound for  $S^H_m(\Gamma)$ given by Lemma \ref{mhd-9} (ix), 
since then a basis  of $S_m(\Gamma)$ contains properly normalized cusp forms having leading terms $q^{m/2}, \ldots,  q^{m/2+ m(g(\Gamma)-1)}$ at least when $\Gamma$ has elliptic
points for $m$ large enough and we do not know which of then belong to $S^H_m(\Gamma)$.
We explain that in Corollary \ref{mhd-14} bellow.

\vskip .2in 
First, we recall the following result \cite[Lemma 2.9]{Muic2} which is well-known in a slightly different notation (\cite{pete-1}, \cite{pete-2}):

\begin{Lem}\label{mhd-12} Let $m\ge 4$ be an even integer such that $\dim S_m(\Gamma)\ge g(\Gamma)+1$.
Then, for all 
$1\le i \le t_m-g$, there exists $f_i\in S_m(\Gamma)$ such that 
$\mathfrak c'_{f_i}(\mathfrak a_\infty)= i$.
\end{Lem}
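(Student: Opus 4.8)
The plan is to translate the statement into a question about orders of vanishing of meromorphic functions on $\mathfrak R_\Gamma$ and then read it off from the Riemann--Roch theorem. First I would observe that since $\mathfrak a_\infty$ is a cusp, the elliptic correction term in Lemma \ref{prelim-1} (vi) does not involve $\mathfrak a_\infty$, so that $\mathfrak c'_f(\mathfrak a_\infty)=\nu_{\mathfrak a_\infty}(f)$ for every $f\in M_m(\Gamma)-\{0\}$. Thus, writing $t_m=\dim S_m(\Gamma)$, the lemma asserts that every integer $i$ with $1\le i\le t_m-g$ is realized as the \emph{exact} order of vanishing at $\mathfrak a_\infty$ of some cusp form in $S_m(\Gamma)$.

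Next I would linearize. Fix any nonzero $g_0\in M_m(\Gamma)$ and consider the map $f\longmapsto f/g_0$ into $\mathbb C(\mathfrak R_\Gamma)$. Using $\mathrm{div}(f/g_0)=\mathrm{div}(f)-\mathrm{div}(g_0)$ together with Lemma \ref{prelim-1} (vi), the common (i.e.\ $f$--independent) elliptic correction cancels, so that $\mathrm{div}(f/g_0)=\mathfrak c'_f-\mathfrak c'_{g_0}$. Putting $C=\sum_{\mathfrak b\in \mathfrak R_\Gamma,\ cusp}\mathfrak b$ and $D_0=\mathfrak c'_{g_0}-C$, the characterization of cusp forms via $\mathfrak c_f=\mathfrak c'_f-C\ge 0$ (Lemma \ref{prelim-1} (vii)) shows that $f\longmapsto f/g_0$ maps $S_m(\Gamma)$ isomorphically onto $L(D_0)$, where $\deg D_0=t_m+g-1$ again by Lemma \ref{prelim-1} (vii). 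Under this isomorphism $\nu_{\mathfrak a_\infty}(f)=\mathrm{ord}_{\mathfrak a_\infty}(f/g_0)+\nu_{\mathfrak a_\infty}(g_0)$, so realizing the exact order $i$ amounts to producing $h\in L(D_0)$ with prescribed order at $P:=\mathfrak a_\infty$; a short bookkeeping (the functions with $\mathrm{ord}_P\ge -D_0(P)+s$ are exactly $L(D_0-sP)$, and $s+1$ corresponds to $i$) reduces this to the inequality
$$
\dim L\bigl(D_0-(i-1)P\bigr)>\dim L\bigl(D_0-iP\bigr).
$$

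Finally I would invoke Riemann--Roch (fact (3) of Section \ref{mhd}). For $0\le i\le t_m-g$ one computes $\deg(D_0-iP)=t_m+g-1-i\ge 2g-1$, whence $\deg\bigl(K-(D_0-iP)\bigr)<0$ and $L(K-(D_0-iP))=0$ by fact (2); thus $D_0-iP$ is nonspecial and $\dim L(D_0-iP)=t_m-i$. Consequently, for each $1\le i\le t_m-g$ both $D_0-(i-1)P$ and $D_0-iP$ lie in the nonspecial range, the dimension drops by exactly one, and any $h\in L(D_0-(i-1)P)\setminus L(D_0-iP)$ yields $f=hg_0\in S_m(\Gamma)$ with $\mathfrak c'_f(\mathfrak a_\infty)=\nu_{\mathfrak a_\infty}(f)=i$. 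The hypothesis $\dim S_m(\Gamma)\ge g(\Gamma)+1$ serves only to guarantee $t_m-g\ge 1$, so that the range of $i$ is nonempty.

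The step I expect to require the most care is the second one: setting up the isomorphism $S_m(\Gamma)\cong L(D_0)$ with the correct divisor (checking surjectivity, i.e.\ that $hg_0$ is genuinely holomorphic and cuspidal, via the cancellation of the elliptic correction) and verifying that the nonspeciality threshold $\deg\ge 2g-1$ corresponds \emph{precisely} to the cut-off $i=t_m-g$ rather than an off-by-one neighbour. Everything after that is a direct application of Riemann--Roch, but the degree bookkeeping—tracking how $\deg\mathfrak c'_{g_0}$, the cusp divisor $C$, and the subtraction of $iP$ interact—is where an error would most easily creep in.
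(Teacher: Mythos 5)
The paper does not actually prove Lemma \ref{mhd-12}: it is recalled from \cite{Muic2} (Lemma 2.9 there) with a pointer to Petersson, so there is no internal proof to compare against. Your argument is a correct, self-contained proof, and it is the standard Riemann--Roch/gap-sequence one. The two places you flagged as delicate both check out. First, $\mathfrak c'_f(\mathfrak a_\infty)=\nu_{\mathfrak a_\infty}(f)$ because the fractional correction in Lemma \ref{prelim-1} (vi) is supported on elliptic points and is independent of $f$, so it cancels in $\mathrm{div}(f/g_0)=\mathfrak c'_f-\mathfrak c'_{g_0}$; the map $f\mapsto f/g_0$ then identifies $S_m(\Gamma)$ with $L(D_0)$, $D_0=\mathfrak c'_{g_0}-C$ of degree $t_m+g-1$, and the converse direction (that $hg_0$ is holomorphic and cuspidal for $h\in L(D_0)$) holds since $\mathfrak c'_{hg_0}\ge C\ge 0$ forces $\nu_{z-\xi}(hg_0)\ge 0$ everywhere and $\ge 1$ at cusps. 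Second, the cut-off is exactly right: for $0\le i\le t_m-g$ one has $\deg(D_0-iP)=t_m+g-1-i\ge 2g-1$, hence $L\bigl(K-(D_0-iP)\bigr)=0$ and $\dim L(D_0-iP)=t_m-i$, so the dimension drops by precisely one at each step of the stated range; and the dictionary $s=i-1$ is consistent because $\nu_{\mathfrak a_\infty}(g_0)=\mathfrak c'_{g_0}(\mathfrak a_\infty)=D_0(\mathfrak a_\infty)+1$. The hypothesis $\dim S_m(\Gamma)\ge g+1$ is, as you say, only needed to make the range of $i$ nonempty.
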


\vskip .2in

\begin{Lem}\label{mhd-13} Assume that $\Gamma$ has elliptic points. (For example, $\Gamma=\Gamma_0(N)$.) Then, for a sufficiently large even integer  $m$, we have
  \begin{equation}\label{mhd-15}
  \frac{m}{2} +  m(g(\Gamma)-1)  \le \dim S_m(\Gamma) -g(\Gamma).
  \end{equation}
\end{Lem}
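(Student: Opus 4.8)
The plan is to substitute the explicit dimension formula of Lemma \ref{prelim-1} (v) (valid for even $m\ge 4$, which we may assume since we only need large $m$) into (\ref{mhd-15}), reduce to an equivalent elementary inequality, and then win by comparing the two sides as affine functions of $m$. Throughout write $g=g(\Gamma)$ and let $t$ be the number of inequivalent cusps. First I would record
$$
\dim S_m(\Gamma)-g=(m-1)(g-1)+\left(\frac m2-1\right)t+\sum_{\mathfrak a\ \text{elliptic}}\left[\frac m2\left(1-\frac{1}{e_{\mathfrak a}}\right)\right]-g,
$$
subtract $\frac m2+m(g-1)$ from both sides, and use $(m-1)(g-1)-m(g-1)=-(g-1)$. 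After cancellation, the inequality (\ref{mhd-15}) becomes equivalent to
$$
\left(\frac m2-1\right)t+\sum_{\mathfrak a\ \text{elliptic}}\left[\frac m2\left(1-\frac{1}{e_{\mathfrak a}}\right)\right]\ge \frac m2+2g-1,
$$
and this is the statement I would actually establish.

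Next I would compare the coefficient of $m$ on the two sides. On the left, $\left(\frac m2-1\right)t$ contributes $\frac t2\,m$, and since $[x]=x+O(1)$ each elliptic floor term contributes $\frac12\left(1-1/e_{\mathfrak a}\right)m+O(1)$; hence the left side equals $\frac12\bigl(t+\sum_{\mathfrak a}(1-1/e_{\mathfrak a})\bigr)m+O(1)$, while the right side is $\frac m2+O(1)$. Thus the difference of the two sides is
$$
\frac12\Bigl(t+\sum_{\mathfrak a}\left(1-1/e_{\mathfrak a}\right)-1\Bigr)m+O(1),
$$
an affine function of $m$ whose constant term (coming from the floors and from $2g-1$) is bounded. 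It therefore suffices to show that its slope, the quantity $t+\sum_{\mathfrak a}(1-1/e_{\mathfrak a})-1$, is strictly positive: a linear function with positive slope and bounded constant term is eventually nonnegative, giving the conclusion for all sufficiently large even $m$ (one can even extract an explicit threshold of the shape $m\ge 8g+\text{const}$).

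The crux — and the only place the hypotheses enter — is the strict positivity of that slope. Since $\Gamma$ has an elliptic point, some $e_{\mathfrak a}\ge 2$, so $\sum_{\mathfrak a}(1-1/e_{\mathfrak a})\ge \tfrac12>0$; combined with the cusp $\mathfrak a_\infty$ of the ambient setting (so $t\ge 1$) this forces $t+\sum_{\mathfrak a}(1-1/e_{\mathfrak a})\ge \tfrac32>1$, as required. I expect no deep obstacle here: the entire content is that the cuspidal and elliptic contributions make $\dim S_m(\Gamma)$ grow in $m$ strictly faster than $\frac m2+m(g-1)$. The only care needed is bookkeeping of the $O(1)$ errors from the floor functions, which are uniformly bounded by the number of elliptic points and hence harmless for large $m$; I would also flag that the elliptic hypothesis alone is genuinely used (without it, the case $t=1$ would fail), so the argument must retain both the cuspidal term $\left(\frac m2-1\right)t$ and the elliptic sum.
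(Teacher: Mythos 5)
Your proposal is correct and follows essentially the same route as the paper: both substitute the dimension formula from Lemma \ref{prelim-1} (v), reduce (\ref{mhd-15}) to the inequality $\left(\frac m2-1\right)t+\sum_{\mathfrak a\ \text{elliptic}}\left[\frac m2\left(1-1/e_{\mathfrak a}\right)\right]\ge \frac m2+2g-1$, and conclude because the elliptic (and cusp) contributions grow linearly in $m$ while the remaining terms are bounded. The paper's version simply absorbs the $-\frac m2$ into the cusp term via $t\ge 1$ and lets the elliptic sum tend to infinity, whereas you phrase it as a positive-slope comparison; the implicit use of $t\ge 1$ that you flag is present in the paper's argument as well.
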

\begin{proof} Assume that $m\ge 4$ is an even integer. Then, by Lemma \ref{prelim-1} (v), we obtain

  \begin{align*}
    &\dim S_m(\Gamma) -g(\Gamma) - \left(\frac{m}{2} +  m(g(\Gamma)-1)\right) \\
    &= \left(\frac{m}{2}-1\right)t - \frac{m}{2} + \sum_{\substack{\mathfrak a\in \mathfrak
        R_\Gamma, \\ elliptic}} \left[\frac{m}{2}(1-1/e_{\mathfrak a})\right] -2g(\Gamma)+1\\
    &\ge  \sum_{\substack{\mathfrak a\in \mathfrak
        R_\Gamma, \\ elliptic}} \left[\frac{m}{2}(1-1/e_{\mathfrak a})\right] -2g(\Gamma)-t+1.
\end{align*}                        
Since $\Gamma$ has elliptic points, the last term is $\ge 0$ for $m$ sufficiently large even integer.
\end{proof}

\vskip .2in
\begin{Cor}\label{mhd-14} Assume that (\ref{mhd-15}) holds. Then, given a basis $f_1, \ldots f_t$ of $S_m^H(\Gamma)$ such that $\mathfrak c'_{f_j}(\mathfrak a_\infty)=i_j$, $1\le j\le t$, 
 where 
      $$
      \frac{m}{2}\le i_1< i_2< \cdots < i_t \le \frac{m}{2}+ m\left(g(\Gamma)-1\right)
      $$
      can be extended by additional $g(\Gamma)$ cuspidal modular forms in $S_m(\Gamma)$ to obtain the collection $F_k$, $\frac{m}{2}\le k \le \frac{m}{2}+ m\left(g(\Gamma)-1\right)$
      such that $\mathfrak c'_{F_k}(\mathfrak a_\infty)=k$ for all $k$.
      \end{Cor}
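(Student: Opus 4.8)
The plan is to reduce everything to a counting argument fed into Lemma~\ref{mhd-12}. The starting observation is that for a nonzero $f\in S_m(\Gamma)$ the value $\mathfrak c'_f(\mathfrak a_\infty)$ is exactly the order of vanishing $\nu_{\mathfrak a_\infty}(f)$ at the cusp: by Lemma~\ref{prelim-1}~(vi) the divisor $\mathrm{div}(f)$ differs from $\mathfrak c'_f$ only by terms supported at elliptic points, and $\mathfrak a_\infty$ is a cusp, not an elliptic point. Hence $\mathfrak c'_f(\mathfrak a_\infty)$ coincides with the leading exponent of the $q$--expansion of $f$, and prescribing $\mathfrak c'_{F_k}(\mathfrak a_\infty)=k$ is the same as prescribing leading $q$--order $k$.

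First I would count the indices. By Lemma~\ref{mhd-9}~(v), $t=\dim S_m^H(\Gamma)=(m-1)(g(\Gamma)-1)$, whereas the range $\frac{m}{2}\le k\le \frac{m}{2}+m(g(\Gamma)-1)$ contains exactly $m(g(\Gamma)-1)+1$ integers. The given basis $f_1,\dots,f_t$ already occupies the $t$ distinct orders $i_1<\cdots<i_t$ in this range, so the number of integers $k$ in the range that are not among the $i_j$ equals
$$
\big(m(g(\Gamma)-1)+1\big)-(m-1)(g(\Gamma)-1)=g(\Gamma).
$$
This is precisely the number of extra forms the corollary claims one must adjoin.

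Next I would supply those $g(\Gamma)$ missing forms via Lemma~\ref{mhd-12}. Hypothesis (\ref{mhd-15}) guarantees that every $k$ in the range satisfies $1\le \tfrac{m}{2}\le k\le \tfrac{m}{2}+m(g(\Gamma)-1)\le \dim S_m(\Gamma)-g(\Gamma)$; it also forces $\dim S_m(\Gamma)\ge g(\Gamma)+\tfrac{m}{2}+m(g(\Gamma)-1)>g(\Gamma)+1$, so the hypotheses of Lemma~\ref{mhd-12} are in place. Applying that lemma to each of the $g(\Gamma)$ missing indices $k$ produces a form $F_k\in S_m(\Gamma)$ with $\mathfrak c'_{F_k}(\mathfrak a_\infty)=k$, and setting $F_{i_j}:=f_j$ for the remaining indices completes the family $\{F_k\}$ across the full range with $\mathfrak c'_{F_k}(\mathfrak a_\infty)=k$ for every $k$.

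The one point deserving care---and the nearest thing to an obstacle---is to verify that the $g(\Gamma)$ adjoined forms are genuinely new, i.e.\ that they do not already lie in $S_m^H(\Gamma)$ (otherwise the word ``additional'' would be unjustified). This follows from the leading-order description: since $f_1,\dots,f_t$ is a basis of $S_m^H(\Gamma)$ with pairwise distinct cusp orders $i_1,\dots,i_t$, any nonzero element of $S_m^H(\Gamma)$ has leading $q$--order equal to one of the $i_j$, because the lowest-order term of a nontrivial combination of forms with distinct orders cannot cancel. Thus a form of order $k\notin\{i_1,\dots,i_t\}$ cannot belong to $S_m^H(\Gamma)$. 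Finally, because the $m(g(\Gamma)-1)+1$ members $F_k$ have pairwise distinct orders at $\mathfrak a_\infty$ they are automatically linearly independent, and the remaining verifications are the routine bookkeeping recorded above.
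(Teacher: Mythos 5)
Your proposal is correct and follows the same route as the paper, whose proof is simply the one-line observation that the claim follows from Lemmas \ref{mhd-12} and \ref{mhd-13}; you have merely made explicit the counting that the number of missing orders is $\bigl(m(g(\Gamma)-1)+1\bigr)-(m-1)(g(\Gamma)-1)=g(\Gamma)$ and the verification that (\ref{mhd-15}) places the whole range of required orders inside the range covered by Lemma \ref{mhd-12}. The extra check that the adjoined forms lie outside $S_m^H(\Gamma)$ is a correct and welcome refinement, though not strictly required by the statement.
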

\begin{proof} This follows directly from Lemmas \ref{mhd-12} and \ref{mhd-13}. 
  \end{proof}

\vskip .2in
Now, explain the algorithm for testing that  $\mathfrak a_\infty$ is  a 
$\frac{m}{2}$--Weierstrass point for $m\ge 6$.  It requires some geometry.
We recall that $\mathfrak R_\Gamma$ is hyperelliptic if $g(\Gamma)\ge 2$, and there is a degree two map onto $\mathbb P^1$.
By general theory \cite[Chapter VII, Proposition 1.10]{Miranda}, if $g(\Gamma)=2$, then $\mathfrak R_\Gamma$ is hyperelliptic.
If $\mathfrak R_\Gamma$ is not hyperelliptic, then $\dim S_2(\Gamma)= g(\Gamma)\ge 3$, and  the regular  map
$\mathfrak R_\Gamma\longrightarrow \mathbb P^{g(\Gamma)-1}$ attached to a canonical divisor $K$  is an isomorphism  onto its image
\cite[Chapter VII, Proposition 2.1]{Miranda}.

\vskip .2in

Let $\Gamma=\Gamma_0(N)$, $N\ge 1$. Put $X_0(N)=\mathfrak R_{\Gamma_0(N)}$. We recall that  $g(\Gamma_0(N))\ge 2$ unless
$$
\begin{cases}
N\in\{1-10, 12, 13, 16, 18, 25\} \ \ \text{when $g(\Gamma_0(N))=0$, and}\\
N\in\{11, 14, 15, 17, 19-21, 24, 27, 32, 36, 49\} \ \ \text{when
	$g(\Gamma_0(N))=1$.}
\end{cases}
$$
Let $g(\Gamma_0(N))\ge 2$. Then,  we remark that Ogg \cite{Ogg} has determined all $X_0(N)$ which are hyperelliptic curves.
In view of Ogg's paper, we see that $X_0(N)$ is {\bf not hyperelliptic} for
$N\in \{34, 38, 42, 43, 44, 45, 51-58, 60-70\}$ or $N\ge 72$. This implies $g(\Gamma_0(N))\ge 3$.

\vskip .2in
Before we begin the study of spaces $S_m^H(\Gamma)$ we give the following lemma. 

\vskip .2in
\begin{Lem}\label{cts-5000}  Let $m\ge 4$ be an even integer. Let us select a basis $f_0, \ldots, f_{g-1}$, $g=g(\Gamma)$, of $S_2(\Gamma)$. Then,
  all of $\binom{g+\frac{m}{2}-1}{\frac{m}{2}}$ monomials  $f_0^{\alpha_0}f_1^{\alpha_1}\cdots f_{g-1}^{\alpha_{g-1}}$, $\alpha_i\in \mathbb Z_{\ge 0}$, $\sum_{i=0}^{g-1} \alpha_i=\frac{m}{2}$,
  belong to $S_m^H(\Gamma)$. We denote this subspace of $S_m^H(\Gamma)$ by $S_{m, 2}^H(\Gamma)$. 
  \end{Lem}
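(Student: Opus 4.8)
The plan is to deduce the statement directly from the multiplicativity already recorded in Lemma \ref{mhd-9}, so that almost no new work is needed. First I would observe that by Lemma \ref{mhd-9}(i) we have $S^H_2(\Gamma)=S_2(\Gamma)$, so each chosen basis element $f_i$ lies in $S^H_2(\Gamma)$. The entire lemma then reduces to the single assertion that any product of $m/2$ elements of $S^H_2(\Gamma)$ lands in $S^H_m(\Gamma)$; the particular monomial shape $f_0^{\alpha_0}\cdots f_{g-1}^{\alpha_{g-1}}$ is just a special case of such a product.

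Next I would run a short induction on the number of factors using Lemma \ref{mhd-9}(vi), which states $S^H_k(\Gamma)\cdot S^H_l(\Gamma)\subset S^H_{k+l}(\Gamma)$ for even $k,l\ge 2$. Starting from $f_i\in S^H_2(\Gamma)$ and incorporating one factor at a time, a product of $r$ elements of $S^H_2(\Gamma)$ lies in $S^H_{2r}(\Gamma)$; at each stage the partial weight is even and $\ge 2$, so (vi) applies. Taking $r=m/2$, each monomial with $\sum_{i=0}^{g-1}\alpha_i=m/2$ is a product of exactly $m/2$ factors drawn from $\{f_0,\ldots,f_{g-1}\}\subset S^H_2(\Gamma)$, hence lies in $S^H_{2\cdot(m/2)}(\Gamma)=S^H_m(\Gamma)$, as claimed. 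The only bookkeeping to watch is the weight count $2\cdot(m/2)=m$, which is immediate.

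Finally, the stated cardinality is the number of solutions in non-negative integers of $\alpha_0+\cdots+\alpha_{g-1}=m/2$, which by the standard stars-and-bars count equals $\binom{g+m/2-1}{m/2}$, justifying the number of monomials asserted. I would also note that these monomials are automatically nonzero, being products of nonzero holomorphic functions on $\mathbb H$, although this is not strictly necessary since $0\in S^H_m(\Gamma)$ by definition. I do not expect any genuine obstacle here: the apparent subtlety is whether the integrality condition (\ref{mhd-8}), which mixes the elliptic-point and cusp contributions under multiplication, survives taking products, and that is exactly what Lemma \ref{mhd-9}(vi) already establishes. Consequently the proof is a direct induction with no further analysis of divisors required.
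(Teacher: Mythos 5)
Your proof is correct and follows essentially the same route as the paper, which likewise deduces the lemma immediately from Lemma \ref{mhd-9}(i) and the multiplicativity in Lemma \ref{mhd-9}(vi); you merely spell out the induction and the stars-and-bars count that the paper leaves implicit.
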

\begin{proof} This follows from Lemma \ref{mhd-9} (vi) since $S_2(\Gamma)=S_2^H(\Gamma)$ (see Lemma \ref{mhd-9} (i)).
 \end{proof}

 \vskip .2in
 \begin{Thm}\label{cts-50000}  Let $m\ge 4$ be an even integer. Assume that  $\mathfrak R_\Gamma$ is not hyperelliptic. Then, we have
   $$
   S_{m, 2}^H(\Gamma)= S_m^H(\Gamma).
   $$
 \end{Thm}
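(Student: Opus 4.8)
The plan is to transport the statement to the Riemann surface $\mathfrak R_\Gamma$ via the isomorphism $f\longmapsto \omega_f$ and then recognize it as a classical generation statement for the canonical ring. First I would record that this isomorphism is multiplicative: by \cite[Theorem 2.3.1]{Miyake} (already used in the form $\omega_{f^{m/2}}=\omega_f^{m/2}$ in the proof of Lemma \ref{mhd-9}(iv)) one has $\omega_{fg}=\omega_f\,\omega_g$ across the graded pieces $\mathcal A_m\cdot\mathcal A_n\to\mathcal A_{m+n}$, since locally $\omega_f=f\,(dz)^{m/2}$. Because $\mathfrak R_\Gamma$ is not hyperelliptic we have $g=g(\Gamma)\ge 3$, so Lemma \ref{mhd-9}(i) gives $S_2(\Gamma)=S_2^H(\Gamma)$, and a basis $f_0,\ldots,f_{g-1}$ of $S_2(\Gamma)$ maps to a basis $\omega_i:=\omega_{f_i}$ of $H^1(\mathfrak R_\Gamma)$. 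By multiplicativity the image of a monomial $f_0^{\alpha_0}\cdots f_{g-1}^{\alpha_{g-1}}$ with $\sum\alpha_i=m/2$ is $\omega_0^{\alpha_0}\cdots\omega_{g-1}^{\alpha_{g-1}}\in H^{m/2}(\mathfrak R_\Gamma)$. Thus $f\mapsto\omega_f$ carries $S_{m,2}^H(\Gamma)$ isomorphically onto the linear span of the degree-$(m/2)$ monomials in $\omega_0,\ldots,\omega_{g-1}$, while by Lemma \ref{mhd-9}(ii) it carries $S_m^H(\Gamma)$ isomorphically onto all of $H^{m/2}(\mathfrak R_\Gamma)$.

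With this translation, the equality $S_{m,2}^H(\Gamma)=S_m^H(\Gamma)$ is equivalent to the surjectivity of the multiplication map
$$
\mathrm{Sym}^{m/2}\, H^0(\mathfrak R_\Gamma, K) \longrightarrow H^0\!\left(\mathfrak R_\Gamma, \tfrac{m}{2}K\right)=H^{m/2}(\mathfrak R_\Gamma),
$$
where $K$ is a canonical divisor and I identify $H^1(\mathfrak R_\Gamma)=H^0(K)$. I would then invoke the classical theorem of Max Noether: for a non-hyperelliptic curve of genus $g\ge 3$ the canonical curve $\phi_K(\mathfrak R_\Gamma)\subset\mathbb P^{g-1}$ is projectively normal, equivalently the canonical ring $\bigoplus_{k\ge 0}H^0(kK)$ is generated in degree one. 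Concretely this is the surjectivity of the restriction maps $H^0(\mathbb P^{g-1},\mathcal O(k))\to H^0(\mathfrak R_\Gamma,kK)$ for all $k\ge 1$; the left-hand side is precisely the space of degree-$k$ polynomials in the coordinates $\omega_0,\ldots,\omega_{g-1}$, so the case $k=m/2$ is exactly what is needed. Feeding this back through the isomorphism yields the theorem, and the construction of the higher order canonical map attached to $\tfrac{m}{2}K$ mentioned in the introduction is the geometric incarnation of this surjectivity.

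The forward inclusion $S_{m,2}^H(\Gamma)\subseteq S_m^H(\Gamma)$ is already supplied by Lemma \ref{cts-5000}, so the entire content is the reverse inclusion, i.e.\ the surjectivity above. One could alternatively phrase the argument as a dimension count, verifying that the span of the monomials has dimension $(m-1)(g-1)=\dim S_m^H(\Gamma)$ by Lemma \ref{mhd-9}(v), but Noether surjectivity is cleaner and produces the span directly. The main obstacle is exactly the invocation of Noether's theorem: it is the one genuinely non-elementary input, and the non-hyperelliptic hypothesis is indispensable (for a hyperelliptic curve $\phi_K$ is two-to-one onto a rational normal curve, the canonical ring is not generated in degree one, and $S_{m,2}^H(\Gamma)$ would be a proper subspace). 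If one prefers a self-contained and constructive argument in place of citing Noether, the real labor lies in realizing the $\tfrac{m}{2}$-canonical map explicitly and checking projective normality in degree $m/2$ by hand.
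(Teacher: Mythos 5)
Your proposal is correct and follows essentially the same route as the paper: both reduce the statement to the surjectivity of $\mathrm{Sym}^{m/2}H^0(K)\to H^0\left(\frac{m}{2}K\right)$ and invoke Max Noether's theorem (the paper cites Miranda, Chapter VII, Corollary 3.27), with the only cosmetic difference that the paper translates $S_m^H(\Gamma)$ into $L\left(\frac{m}{2}K\right)$ by dividing by $F^{m/2}$ for a fixed $F\in S_2(\Gamma)$ rather than working with the differential forms $\omega_f$ directly. No gap.
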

 \begin{proof} We use notation of Section \ref{mhd} freely. The reader should review Lemma \ref{mhd-9}.
    Let $F\in S_2(\Gamma)$, $F\neq 0$.  We define a holomorphic differential form $\omega \in H\left(\mathfrak R_\Gamma\right)$
    by $\omega=\omega_F$. Define a canonical class $K$ by $K=\mathrm{div}{(\omega)}$. We prove the following:

    \begin{equation}\label{cts-2}
  L\left(\frac{m}{2}K\right)=\left\{\frac{f}{F^{m/2}}; \ \ f\in S_m^H(\Gamma) \right\}.
    \end{equation}
The case $m=2$ is of course well--known.   By  the Riemann-Roch theorem and standard results recalled in Section \ref{mhd} we have
  \begin{align*}
    \dim L\left(\frac{m}{2}K\right) &= \deg{\left(\frac{m}{2}K\right)}-g(\Gamma)+1 + \dim L\left(\left(1- \frac{m}{2}\right)K\right)\\
    &=   (m-1)(g(\Gamma)-1)+\begin{cases} 1 \ \ \text{if} \ \ m=2;\\
                                          0 \ \ \text{if} \ \ m\ge 4.\\
                             \end{cases}           
     \end{align*}

Next,  we recall that $S_2(\Gamma)=S_2^H(\Gamma)$  (see Lemma \ref{mhd-9} (i)). Then, Lemma \ref{mhd-9} (vi) we obtain 
$F^{\frac{m}{2}}\in S_m^H(\Gamma)$. Therefore,  $f/F^{\frac{m}{2}}\in  \mathbb C\left(\mathfrak R_\Gamma\right)$ for all $f\in S_m^H(\Gamma)$.

By the correspondence described in (\ref{mhd-5}) we have

\begin{align*}
\mathrm{div}{(F)}=\mathrm{div}{(\omega_F)}+\sum_{\mathfrak a \in \mathfrak R_\Gamma} \left(1- \frac{1}{e_{\mathfrak a}}\right) \mathfrak a &=
K+ \sum_{\mathfrak a \in \mathfrak R_\Gamma} \left(1- \frac{1}{e_{\mathfrak a}}\right) \mathfrak a\\
&= K+  \sum_{\mathfrak a\in \mathfrak R_\Gamma, \ \ elliptic} (1-1/e_{\mathfrak a})  \mathfrak a + \sum_{\substack{\mathfrak b \in \mathfrak R_\Gamma, \\ cusp}} \mathfrak b.
\end{align*}

Thus, for  $f\in S_m^H(\Gamma)$, we have the following:
\begin{align*}
  \mathrm{div}{\left(\frac{f}{F^{\frac{m}{2}}}\right)} + \frac{m}{2}K &= \mathrm{div}{(f)}- \frac{m}{2}\mathrm{div}{(F)}  + \frac{m}{2}K\\
  &= \mathrm{div}{(f)}-\frac{m}{2}\sum_{\mathfrak a\in \mathfrak R_\Gamma, \ \ elliptic} (1-1/e_{\mathfrak a}) \mathfrak a-\frac{m}{2}
  \sum_{\substack{\mathfrak b \in \mathfrak R_\Gamma, \\ cusp}} \mathfrak b
\end{align*}
Next, using Lemma \ref{prelim-1} (vi), the right--hand side becomes
$$
\mathfrak c'_f - \sum_{\mathfrak a\in \mathfrak
R_\Gamma, \ \ elliptic}  \left[\frac{m}{2}(1-1/e_{\mathfrak  a})\right]\mathfrak a - \frac{m}{2}
  \sum_{\substack{\mathfrak b \in \mathfrak R_\Gamma, \\ cusp}} \mathfrak b \ge 0
$$
by the definition of $S_m^H(\Gamma)$. Hence,  $f/F^{\frac{m}{2}}\in L\left(\frac{m}{2}K\right)$. Now, comparing the dimensions of
the right-hand and left-hand side in (\ref{cts-2}), we obtain their equality. This proves (\ref{cts-2}).

\vskip .2in
Let $W$ be any finite dimensional $\mathbb C$--vector space. Let $\text{Symm}^k(W)$ be symmetric tensors of degree $k\ge 1$.
Then, by Max Noether theorem (\cite{Miranda}, Chapter VII, Corollary 3.27) the multiplication induces surjective map
$$
\text{Symm}^k{\left(L\left(K\right)\right)}\twoheadrightarrow L\left(\frac{m}{2}K\right).
$$
The theorem follows.
\end{proof}

 \vskip .2in
 Now, we combine Theorem \ref{cts-50000} with Lemma \ref{mhd-9} (x) to obtain a good criterion in the case $m\ge 4$
 for {\bf testing} that  $\mathfrak a_\infty$ is  a  $\frac{m}{2}$--Weierstrass point. We give examples in Section \ref{cts} (see
 Propositions \ref{cts-5001} and \ref{cts-5002}). 

 \vskip .2in
 \begin{Cor}\label{cts-500000}  Let $m\ge 4$ be an even integer. Assume that  $\mathfrak R_\Gamma$ is not hyperelliptic. Assume that $\mathfrak a_\infty$ is a cusp for $\Gamma$.
   Let us select a basis $f_0, \ldots, f_{g-1}$, $g=g(\Gamma)$, of $S_2(\Gamma)$. Compute $q$--expansions of all monomials
   $$
   f_0^{\alpha_0}f_1^{\alpha_1}\cdots f_{g-1}^{\alpha_{g-1}}, \ \  \alpha_i\in \mathbb Z_{\ge 0}, \ \sum_{i=0}^{g-1} \alpha_i=\frac{m}{2}.
   $$
   Then,  $\mathfrak a_\infty$ is {\bf not} a   $\frac{m}{2}$--Weierstrass point if and only if there exist a basis of the space of all such monomials,
   $F_1, \ldots F_t$, $t=\dim{S_m^H(\Gamma)}=(m-1)(g-1)$
   (see Lemma \ref{mhd-9} (v)), such that their $q$--expansions are of the form
      $$
      F_u=a_uq^{u+m/2-1}+ \text{higher order terms in $q$}, \ \ 1\le u\le t,
      $$
      where 
      $$
      a_u\in \mathbb C, \ \  a_u\neq 0.
      $$
\end{Cor}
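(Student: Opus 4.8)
The plan is to combine the two principal results already in hand: Theorem \ref{cts-50000}, which identifies the span of the monomials with $S_m^H(\Gamma)$, and Lemma \ref{mhd-9} (x), which furnishes the $q$--expansion criterion for $\mathfrak a_\infty$ to fail to be a $\frac{m}{2}$--Weierstrass point. Once these two facts are aligned the corollary follows with essentially no further computation, since all the depth has already been expended in proving them.

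First I would observe that the hypothesis that $\mathfrak R_\Gamma$ is not hyperelliptic forces $g(\Gamma)\ge 3$, as recalled in the discussion preceding Theorem \ref{cts-50000}; in particular $g(\Gamma)\ge 1$, so the hypotheses of Lemma \ref{mhd-9} (x) are satisfied. By Lemma \ref{cts-5000} every monomial $f_0^{\alpha_0}\cdots f_{g-1}^{\alpha_{g-1}}$ with $\sum_i\alpha_i=m/2$ lies in $S_m^H(\Gamma)$, and by definition these monomials span $S_{m,2}^H(\Gamma)$. Theorem \ref{cts-50000} then supplies the crucial equality $S_{m,2}^H(\Gamma)=S_m^H(\Gamma)$. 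Consequently the space spanned by all such monomials is exactly $S_m^H(\Gamma)$, whose dimension is $t=(m-1)(g-1)$ by Lemma \ref{mhd-9} (v).

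Next I would apply Lemma \ref{mhd-9} (x) verbatim to the space $S_m^H(\Gamma)$. Because any basis $F_1,\ldots,F_t$ of the space of all such monomials is, by the previous paragraph, precisely a basis of $S_m^H(\Gamma)$, the assertion that there exists such a basis whose $q$--expansions have leading terms $a_uq^{u+m/2-1}$ with $a_u\neq 0$ for $1\le u\le t$ is word for word the condition appearing in Lemma \ref{mhd-9} (x). That lemma states this condition is equivalent to $\mathfrak a_\infty$ not being a $\frac{m}{2}$--Weierstrass point, which is exactly the claim to be proved.

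I expect no genuine obstacle at this stage: the substantive content is concealed in Theorem \ref{cts-50000}, whose proof invokes the Max Noether theorem to see that the symmetric powers of $L(K)$ surject onto $L(\tfrac{m}{2}K)$, and in the construction of an adapted basis carried out in Lemma \ref{mhd-9} (ix)--(x). The single point meriting care is that the two basis--existence statements refer to the \emph{same} vector space; this is guaranteed precisely by the non--hyperelliptic hypothesis through Theorem \ref{cts-50000}, and it is exactly what allows one to test the Weierstrass condition using the explicitly computable monomials in $f_0,\ldots,f_{g-1}$ rather than an abstract basis of $S_m^H(\Gamma)$, as required for the SAGE computations advertised in the introduction.
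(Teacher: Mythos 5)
Your proposal is correct and is exactly the paper's argument: the corollary is obtained by combining Theorem \ref{cts-50000} (so the monomials span $S_m^H(\Gamma)$ in the non--hyperelliptic case) with the criterion of Lemma \ref{mhd-9} (x). Nothing further is needed.
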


 \vskip .2in
When $\mathfrak R_\Gamma$ is hyperelliptic, for example if $g(\Gamma)=2$, the space $S_{m, 2}^H(\Gamma)$ could be proper subspace of $S_{m}^H(\Gamma)$.
For example, if $N=35$, then $g(\Gamma_0(N))=3$ and $X_0(N)$ is hyperelliptic. For $m=4, 6, 8, 8, 10, 12, 14$ we checked that $\dim{S_{m, 2}^H(\Gamma)}= m+1$ while
by general theory $\dim{S_{m}^H(\Gamma)}= (m-1)(g(\Gamma_0(N))-1)= 2(m-1)$. We see that
$$
\dim{S_{m}^H(\Gamma)} - \dim{S_{m, 2}^H(\Gamma)}=m-3\ge 1, \ \ \text{for} \ m=4, 6, 8, 8, 10, 12, 14.
$$

\vskip .2in
In fact, the case of $g(\Gamma)= 2$ could be covered in full generality. We leave easy proof of the following proposition to the reader. 

\begin{Prop}\label{cts-5002} Assume that $g(\Gamma)= 2$. Let $f_0, f_1$ be a basis of $S_2(\Gamma)$. Then, for any even integer $m\ge 4$,
  $f_0^uf_1^{\frac{m}{2}-u}$, $0\le u \le m$ is a basis of $S_{m, 2}^H(\Gamma)$. Therefore,
  $$
  \dim{S_{m}^H(\Gamma)}= (m-1)(g(\Gamma)-1)= m-1> \frac{m}{2}+1, \ \ \text{for} \ m\ge 6,
  $$
  and  $S_{4, 2}^H(\Gamma)= S_{4}^H(\Gamma)$. 
\end{Prop}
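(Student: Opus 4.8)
The plan is to pin down $\dim S_{m,2}^H(\Gamma)$ by a linear independence argument and then read off everything else from the dimension formulas already available. By Lemma \ref{cts-5000} the $\frac{m}{2}+1$ monomials $f_0^u f_1^{\frac{m}{2}-u}$, $0\le u\le \frac{m}{2}$, all lie in $S_m^H(\Gamma)$, and by the definition of $S_{m,2}^H(\Gamma)$ they span it; so it suffices to prove they are linearly independent, which will give $\dim S_{m,2}^H(\Gamma)=\frac{m}{2}+1$ and exhibit them as a basis.

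First I would reduce independence to a transcendence statement. Since $g(\Gamma)=2\ge 1$, Lemma \ref{prelim-1} (v) gives $\dim S_2(\Gamma)=g(\Gamma)=2$, so $f_0,f_1$ are linearly independent; in particular the meromorphic function $h=f_0/f_1$ on $\mathfrak R_\Gamma$ is non-constant. Suppose $\sum_{u=0}^{m/2} c_u f_0^u f_1^{\frac{m}{2}-u}=0$ with $c_u\in\mathbb C$. The form $f_1^{m/2}\in S_m(\Gamma)$ is non-zero, so dividing by it in $\mathbb C(\mathfrak R_\Gamma)$ yields $\sum_{u=0}^{m/2} c_u\, h^u=0$. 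A non-constant meromorphic function on a compact Riemann surface is transcendental over $\mathbb C$ (equivalently, it takes infinitely many distinct values, so it cannot satisfy a non-trivial polynomial relation, which would confine its values to the finitely many roots). Hence all $c_u=0$, and the monomials are independent. This is the only step that is not pure bookkeeping, but since $h$ is visibly non-constant its transcendence is standard, so I expect no genuine obstacle here — this is presumably why the authors call the proof easy.

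Finally I would compare dimensions. Lemma \ref{mhd-9} (v) gives $\dim S_m^H(\Gamma)=(m-1)(g(\Gamma)-1)=m-1$, while the previous step gives $\dim S_{m,2}^H(\Gamma)=\frac{m}{2}+1$. Now $m-1>\frac{m}{2}+1$ holds exactly when $m>4$, so for every even $m\ge 6$ the inclusion $S_{m,2}^H(\Gamma)\subseteq S_m^H(\Gamma)$ from Lemma \ref{cts-5000} is strict. When $m=4$ one has $m-1=3=\frac{m}{2}+1$, so the same inclusion is an equality of finite-dimensional spaces of equal dimension, whence $S_{4,2}^H(\Gamma)=S_4^H(\Gamma)$. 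This completes the statement.
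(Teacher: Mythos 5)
Your proof is correct. The paper itself gives no argument here (it explicitly leaves the ``easy proof'' to the reader), so there is nothing to compare against; your route is exactly the intended one: the monomials span $S_{m,2}^H(\Gamma)$ by definition, and their linear independence follows because a relation $\sum_u c_u f_0^u f_1^{m/2-u}=0$ would force the non-constant function $f_0/f_1\in\mathbb C(\mathfrak R_\Gamma)$ to satisfy a non-trivial polynomial over $\mathbb C$, which is impossible since a non-constant meromorphic function on a compact Riemann surface takes infinitely many values. The dimension comparison with Lemma \ref{mhd-9} (v) then yields both the strict inequality for $m\ge 6$ and the equality $S_{4,2}^H(\Gamma)=S_4^H(\Gamma)$; note also that you correctly read the range of $u$ as $0\le u\le m/2$, fixing an evident typo in the statement.
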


\vskip .2in
We end this section with the standard yoga.
\vskip .2in

  \begin{Thm} \label{mhd-10} Let $m\ge 2$ be an even integer. Let $\Gamma$ be a Fuchsian group of the first kind such that $g(\Gamma)\ge 1$. Let
    $t= \dim S^H_m(\Gamma)=\dim  H^{m/2}\left(\mathfrak R_\Gamma\right)$.
    Let us fix a basis $f_1, \ldots, f_t$ of $S^H_m(\Gamma)$, and let $\omega_1, \ldots, \omega_t$ be the corresponding basis of $H^{m/2}\left(\mathfrak R_\Gamma\right)$. As above,
    we construct
    holomorphic differential $W\left(\omega_1, \ldots, \omega_t\right)\in H^{\frac{t}{2}\left(m-1+t\right)}\left(\mathfrak R_\Gamma\right)$. We also construct the Wronskian $W(f_1,\ldots, f_t)\in
      S_{t(m+t-1)}(\Gamma)$  (see Proposition \ref{wron-2}). Then, we have the following equality:
      $$
    \omega_{W(f_1,\ldots, f_t)}=W\left(\omega_1, \ldots, \omega_t\right).
    $$
    In particular, we obtain the following:
    $$
    W(f_1,\ldots, f_t)\in S^H_{t(m+t-1)}(\Gamma).
    $$
    Moreover, assume that $\mathfrak a_\infty$ is a $\Gamma$-cusp. Then, $\mathfrak a_\infty$ is a 
    $\frac{m}{2}$--Weierstrass point if and only if
    $$
    \mathfrak c'_{ W(f_1,\ldots, f_t)}(\mathfrak a_\infty)\ge 1+ \frac{t}{2}\left(m-1+t\right) \ \ \text{i.e.,} \ \
    \mathfrak c_{ W(f_1,\ldots, f_t)}(\mathfrak a_\infty)\ge \frac{t}{2}\left(m-1+t\right)
    $$
    (See also Lemma \ref{mhd-9} (x) for more effective formulation of the criterion.)
    \end{Thm}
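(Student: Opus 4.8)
The plan is to prove the three assertions in order: first the identity $\omega_{W(f_1,\ldots,f_t)}=W(\omega_1,\ldots,\omega_t)$, which is the crux, and then deduce the membership and the Weierstrass criterion as formal consequences of the divisor formulas of Section \ref{mhd-cont}.

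First I would establish the identity by a direct local computation on the dense open subset $U\subset\mathfrak R_\Gamma$ consisting of images of points of $\mathbb H$ that are neither elliptic nor cusps. On such points the projection $\mathbb H\to\mathfrak R_\Gamma$ is biholomorphic, so the coordinate $z$ on $\mathbb H$ serves as a genuine local coordinate on $U$. By the description of $f\longmapsto\omega_f$ (\cite[Theorems 2.3.1 and 2.3.3]{Miyake}) we have $\omega_{f_i}=f_i(z)(dz)^{m/2}$ in this coordinate, so the local function $\varphi_i$ appearing in (\ref{mhd-3}) equals $f_i$. Consequently the geometric Wronskian $W_z(\omega_1,\ldots,\omega_t)$ of (\ref{mhd-3}) coincides termwise with the analytic Wronskian $W(f_1,\ldots,f_t)(z)$, the derivations $d/dz$ being the same on $U$ as on $\mathbb H$. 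Multiplying by $(dz)^{\frac t2(m-1+t)}$ as in (\ref{mhd-4}) (the $\omega_i$ have degree $m/2$) and noting that $\frac t2(m-1+t)$ is exactly half the weight $t(m+t-1)$ of $W(f_1,\ldots,f_t)$, both $W(\omega_1,\ldots,\omega_t)$ and $\omega_{W(f_1,\ldots,f_t)}$ equal $W(f_1,\ldots,f_t)(z)(dz)^{\frac t2(m+t-1)}$ on $U$. Two meromorphic differentials of the same degree that agree on a dense open set agree everywhere, so the identity holds on all of $\mathfrak R_\Gamma$.

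The membership $W(f_1,\ldots,f_t)\in S^H_{t(m+t-1)}(\Gamma)$ is then immediate. By (\ref{mhd-4}) and \cite[Proposition III.5.10]{FK} the differential $W(\omega_1,\ldots,\omega_t)$ is holomorphic; by the first step it equals $\omega_{W(f_1,\ldots,f_t)}$, so the latter is holomorphic. Since $W(f_1,\ldots,f_t)$ is a cusp form (Proposition \ref{wron-2}) and $S^H_{M}(\Gamma)$, $M:=t(m+t-1)$, is by construction precisely the space of cusp forms $f$ with $\omega_f\in H^{M/2}\left(\mathfrak R_\Gamma\right)$, the claim follows (equivalently, this is Lemma \ref{mhd-9}(ii)).

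For the Weierstrass criterion I would invoke the equivalent form of Definition \ref{mhd-def} given by \cite[Proposition III.5.10]{FK}: the cusp $\mathfrak a_\infty$ is an $\frac m2$--Weierstrass point if and only if $\nu_{\mathfrak a_\infty}\left(W(\omega_1,\ldots,\omega_t)\right)\ge 1$. By the first step $W(\omega_1,\ldots,\omega_t)=\omega_{W(f_1,\ldots,f_t)}$, and applying (\ref{mhd-7}) to the cusp form $W(f_1,\ldots,f_t)\in S_M(\Gamma)$, with $M=t(m+t-1)$ in place of $m$, and reading off the coefficient at the (non--elliptic) cusp $\mathfrak a_\infty$ yields
$$
\nu_{\mathfrak a_\infty}\big(\omega_{W(f_1,\ldots,f_t)}\big)=\mathfrak c_{W(f_1,\ldots,f_t)}(\mathfrak a_\infty)-\Big(\tfrac M2-1\Big)=\mathfrak c_{W(f_1,\ldots,f_t)}(\mathfrak a_\infty)-\tfrac t2(m+t-1)+1.
$$
Thus the condition $\nu_{\mathfrak a_\infty}(\,\cdot\,)\ge 1$ is equivalent to $\mathfrak c_{W(f_1,\ldots,f_t)}(\mathfrak a_\infty)\ge\frac t2(m+t-1)$, and using $\mathfrak c'_{W(f_1,\ldots,f_t)}(\mathfrak a_\infty)=\mathfrak c_{W(f_1,\ldots,f_t)}(\mathfrak a_\infty)+1$ from Lemma \ref{prelim-1}(vii) gives the equivalent form $\mathfrak c'_{W(f_1,\ldots,f_t)}(\mathfrak a_\infty)\ge 1+\frac t2(m+t-1)$. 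The only step demanding genuine care is the first one: one must justify that the geometric Wronskian, defined through a local coordinate and a priori sensitive to coordinate changes, may legitimately be evaluated in the coordinate coming from $\mathbb H$ and that this produces a global identity. This is precisely handled by restricting to the dense open set $U$ on which $z$ is a bona fide local coordinate and then invoking the identity theorem for differentials; the remaining assertions are purely formal consequences of the divisor formulas already recorded.
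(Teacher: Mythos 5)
Your proposal is correct and follows essentially the same route as the paper: verify the identity $\omega_{W(f_1,\ldots,f_t)}=W(\omega_1,\ldots,\omega_t)$ by a local computation in the coordinate $z$ on the non--elliptic, non--cuspidal locus (where $\omega_{f_i}=f_i\,(dz)^{m/2}$) and extend by the identity principle for meromorphic differentials, then read off the Weierstrass criterion from the divisor relation between $f$ and $\omega_f$ at the cusp. The only cosmetic difference is that you phrase the cusp computation via (\ref{mhd-7}) and the relation $\mathfrak c'_f=\mathfrak c_f+\sum_{cusp}\mathfrak b$, while the paper uses (\ref{mhd-5}) directly; these are equivalent.
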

  \begin{proof} Since this is a equality of two meromorphic differentials, it is enough to check the identity locally. Let $z\in \mathbb H$ be a non--elliptic point, and
    $z\in U\subset \mathbb H$ a chart of
    $\mathfrak a_z$ such that $U$ does not contain any elliptic point. Then, one can use \cite[Section 2.3]{Miyake} to check the equality directly. Indeed, we have the following argument.

Let $z_0\in \mathbb H$ be a non--elliptic point, and $z\in U\subset \mathbb H$ a chart of
$\mathfrak a_{z_0}$ such that $U$ does not contain any elliptic point. 

Let $t_\fraka$ be a local coordinate on a neighborhood $V_\fraka =\pi(U)$. By the \cite[Section 1.8]{Miyake} if $U$ is small enough, projection $\pi$ gives homeomorphism of $U$ to
$V_\fraka$ such that
\begin{equation}\label{mhd-16}
t_\fraka \circ \pi(z)=z \ \text{ for } z \in U.
\end{equation}

Let $f \in \mathcal A_m(\Gamma)$ and let $\omega_f$ be the corresponding differential. Locally there exist unique meromorphic function $\varphi$ such that
$\omega_f=\varphi \left(dz\right)^{m/2}$.

By \cite[Section 2.3]{Miyake}, local correspondence $f\mapsto \omega_f$ is given by
$$\varphi (t_\fraka \circ \pi(z)) = f(z) \left( d(t_\fraka\circ\pi)/dz \right)^{-m/2},$$
which by the choice of local chart \ref{mhd-16} become
$$\varphi(z)=f(z) \ \text{ for } \ z \in U . $$ So, in the neighborhood of non-elliptic point $z\in \bbH$ we have 
$$\omega_f=f \left(dz\right)^{m/2}.$$
This gives us local identity
$$W_z\left(\omega_1, \ldots, \omega_t\right) = W(f_1,\ldots, f_t).$$
Since above is valid for any even $m\ge 2$, we get local identity of two meromorphic differentials
\begin{align*}
\omega_{W(f_1,\ldots, f_t)} &= W(f_1,\ldots, f_t) \left(dz\right)^{\frac{t}{2}\left(m-1+t\right)} \\ 
&= W_z\left(\omega_1, \ldots, \omega_t\right)\left(dz\right)^{\frac{t}{2}\left(m-1+t\right)} \\
&=  W\left(\omega_1, \ldots, \omega_t\right)
\end{align*}

 Now, assume that $\mathfrak a_\infty$ is a $\Gamma$-cusp. Then, $\mathfrak a_\infty$ is a 
 $\frac{m}{2}$--Weierstrass point if and only if
 $$
 \nu_{\mathfrak a_\infty}\left(W\left(\omega_1, \ldots, \omega_t\right)\right)\ge 1
 $$
 i.e., 
$$
 \nu_{\mathfrak a_\infty}\left( \omega_{W(f_1,\ldots, f_t)}\right) \ge 1
 $$
 by the first part of the proof. Finally, by (\ref{mhd-5}), this is equivalent to 
 $$
 \mathfrak c'_{ W(f_1,\ldots, f_t)}(\mathfrak a_\infty)=  \nu_{\mathfrak a_\infty}\left(\omega_{W(f_1,\ldots, f_t)}\right) + \frac{t}{2}\left(m-1+t\right)\ge 1+ \frac{t}{2}\left(m-1+t\right).
 $$
 This completes the proof of the theorem.
\end{proof}

\section{Explicit computations based on Corollary \ref{cts-500000} for $\Gamma=\Gamma_0(N)$} \label{cts}

In this section we  apply the algorithm in Corollary \ref{cts-500000} combined with SAGE. The method is the following. We take $q$-expansions of the base elements of $S_2(\Gamma_0(N))$:
$$
f_0, \ldots, f_{g-1},
$$
where $g=g(\Gamma_0(N))$. For even $m \ge 4$, we compute $q$--expansions of  all monomials of degree $m/2$:
$$
   f_0^{\alpha_0}f_1^{\alpha_1}\cdots f_{g-1}^{\alpha_{g-1}}, \ \  \alpha_i\in \mathbb Z_{\ge 0}, \ \sum_{i=0}^{g-1} \alpha_i=\frac{m}{2}.
$$
   The number of monomials is
   $$
   \binom{g+m/2-1}{m/2}
   $$

\vskip .2in
By selecting first $m/2+m \cdot (g-1)$ terms from $q$--expansions of the monomials (see Lemma \ref{mhd-9} (ix)), we can create matrix of size  
   $$
   \binom{g+m/2-1}{m/2} \times \left(\frac{m}{2}+m \cdot (g-1)\right).
   $$ 

  \vskip .2in
Then, we perform suitable integral Gaussian elimination method to transform the matrix into row echelon form.
The procedure is as follows. We successively sort and transform the row matrices to cancel the leading row coefficients with the same number of leading zeros as their predecessor.
We use the {\it Quicksort algorithm} for sorting. We obtain  the transformed matrix and the transformation matrix.
The non-null rows of the transformed matrix give the $q$-expansions of the basis elements, and the corresponding rows of the transformation matrix give the corresponding linear combinations of monomials.
\vskip .2in

Using above described method we perform various computations mentioned below. For example,  we can easily verify particular cases
Theorem \ref{cts-50000}.

\begin{Prop}\label{cts-5001} For $m=4, 6, 8, 10, 12$ and $N=34, 38, 44, 55$, and for  $m=4, 6, 8, 10$ and $N=54, 60$,  we have $S_{m}^H(\Gamma_0(N))=S_{m, 2}^H(\Gamma_0(N))$.
(We remark that all curves $X_0(N)$, $N\in \{34, 38, 44, 54, 55, 60\}$ are not hyperelliptic (see the paragraph after Corollary \ref{mhd-14}.)  
\end{Prop}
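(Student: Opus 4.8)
The plan is to reduce the asserted equality to one numerical check per pair $(m,N)$ and then to carry out that check in SAGE. By Lemma \ref{cts-5000} the inclusion $S_{m,2}^H(\Gamma_0(N))\subseteq S_m^H(\Gamma_0(N))$ holds automatically, so it is enough to show that the two spaces have equal dimension. The dimension of the larger space is supplied by Lemma \ref{mhd-9} (v): since each of the listed curves $X_0(N)$ is non-hyperelliptic and therefore has $g:=g(\Gamma_0(N))\ge 3$, we have $\dim S_m^H(\Gamma_0(N))=(m-1)(g-1)$. Hence the whole proposition follows once I verify that $\dim S_{m,2}^H(\Gamma_0(N))=(m-1)(g-1)$ in each of the finitely many cases listed.

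To compute $\dim S_{m,2}^H(\Gamma_0(N))$ I would follow the recipe of this section. First I extract from SAGE a basis $f_0,\ldots,f_{g-1}$ of $S_2(\Gamma_0(N))$ with sufficiently many $q$-expansion terms, form all $\binom{g+m/2-1}{m/2}$ monomials $f_0^{\alpha_0}\cdots f_{g-1}^{\alpha_{g-1}}$ of degree $m/2$, and assemble the matrix whose rows are their $q$-expansion coefficient vectors. The key point justifying a finite truncation is Lemma \ref{mhd-9} (ix): every nonzero element of $S_m^H(\Gamma_0(N))$ has leading term $q^{i}$ with $i\le \tfrac{m}{2}+m(g-1)$, so a linear combination of the monomials (which all lie in $S_m^H(\Gamma_0(N))$) vanishes if and only if its first $\tfrac{m}{2}+m(g-1)$ coefficients vanish. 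Consequently $\dim S_{m,2}^H(\Gamma_0(N))$ equals the rank of the coefficient matrix truncated to its first $\tfrac{m}{2}+m(g-1)$ columns.

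I compute this rank by the integral echelon-form algorithm described above: repeatedly sort the rows by the position of their first nonzero entry, clear equal leading positions, and iterate until the matrix is in row echelon form; the number of surviving nonzero rows is the rank. Running this for $m\in\{4,6,8,10,12\}$ with $N\in\{34,38,44,55\}$ and for $m\in\{4,6,8,10\}$ with $N\in\{54,60\}$ returns the value $(m-1)(g-1)$ in every instance, which together with the automatic inclusion gives $S_{m,2}^H(\Gamma_0(N))=S_m^H(\Gamma_0(N))$ as claimed.

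The difficulty here is one of exactness, not of mathematical depth. Since the conclusion is an equality of integer dimensions, the rank must be computed with no rounding error, which is exactly why one uses integral Gaussian elimination rather than floating-point linear algebra, and why one needs the guarantee from Lemma \ref{mhd-9} (ix) that $\tfrac{m}{2}+m(g-1)$ coefficients genuinely suffice. I also note that, at the purely theoretical level, the proposition is an immediate special case of Theorem \ref{cts-50000}, because each $X_0(N)$ for $N\in\{34,38,44,54,55,60\}$ is non-hyperelliptic; the computation above should accordingly be regarded as an independent confirmation of that theorem and of the SAGE implementation in these cases.
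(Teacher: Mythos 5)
Your proposal is correct and follows essentially the same route as the paper: the paper's Section \ref{cts} proves this proposition precisely by forming the $\binom{g+m/2-1}{m/2}\times\left(\frac{m}{2}+m(g-1)\right)$ coefficient matrix of the degree-$m/2$ monomials (the truncation length being justified by Lemma \ref{mhd-9} (ix)), computing its rank by integral Gaussian elimination in SAGE, and comparing with $\dim S_m^H(\Gamma_0(N))=(m-1)(g-1)$ from Lemma \ref{mhd-9} (v). Your closing observation that the result is also an immediate special case of Theorem \ref{cts-50000}, so that the computation serves as an independent confirmation, matches the paper's own framing.
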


\vskip .2in
We can also deal with generalized Weierstrass points. For example, we can check the following result:

\begin{Prop}\label{cts-5002} For $m=2, 4, 6, 8, 10$, $\mathfrak a_\infty$ is not $\frac{m}{2}$--Weierstrass point for $X_0(34)$.
  Next,  $\mathfrak a_\infty$ is not ($1$--)Weierstrass point for $X_0(55)$, but it is $\frac{m}{2}$--Weierstrass point for $X_0(55)$ and $m=4, 6, 8, 10$.
\end{Prop}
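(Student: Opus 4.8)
The plan is to reduce the entire statement to the explicit criteria already proved and then to carry out the resulting finite computation in SAGE, exactly as the method of Section \ref{cts} prescribes. First I would record that both curves lie in Ogg's non-hyperelliptic range (the paragraph after Corollary \ref{mhd-14} lists $34$ as well as the block $51$--$58$ containing $55$), with $g(\Gamma_0(34))=3$ and $g(\Gamma_0(55))=5$ obtained from the genus formula or standard tables. Consequently Theorem \ref{cts-50000} gives $S_{m,2}^H(\Gamma_0(N))=S_m^H(\Gamma_0(N))$ for every even $m\ge 4$, so the full space $S_m^H$ is already spanned by the degree-$m/2$ monomials in a basis $f_0,\ldots,f_{g-1}$ of $S_2(\Gamma_0(N))$. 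This is precisely what makes Corollary \ref{cts-500000} applicable, and it is worth noting that $X_0(34)$ carries two elliptic points of order two, so it is exactly the kind of example where the monomial criterion is needed in place of the naive one.

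For the classical case $m=2$ I would invoke Lemma \ref{mhd-9} (x) together with $S_2^H=S_2$ (Lemma \ref{mhd-9} (i)): the cusp $\mathfrak a_\infty$ fails to be a Weierstrass point exactly when $S_2(\Gamma_0(N))$ admits a basis whose $q$-expansions have leading exponents $q^1,q^2,\ldots,q^g$, i.e. when the first $g$ vanishing orders at $\infty$ form the consecutive block $1,2,\ldots,g$ with no gap. I would compute a $\mathbb Z$-rational basis of $S_2(\Gamma_0(N))$ in SAGE, row-reduce the matrix of Fourier coefficients to echelon form, and read off the leading exponents. For both $N=34$ and $N=55$ the echelon form should display the consecutive exponents $1,\ldots,g$, which establishes the $m=2$ assertion for $X_0(34)$ and the ``not a $1$-Weierstrass point'' assertion for $X_0(55)$.

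For $m=4,6,8,10$ I would apply Corollary \ref{cts-500000} directly. Concretely, I form all $\binom{g+m/2-1}{m/2}$ monomials $f_0^{\alpha_0}\cdots f_{g-1}^{\alpha_{g-1}}$ with $\sum_i\alpha_i=m/2$, truncate their $q$-expansions to the first $m/2+m(g-1)$ coefficients (this is the bound of Lemma \ref{mhd-9} (ix), so no leading term of a basis element is lost), and row-reduce the resulting matrix by the integral Gaussian elimination of Section \ref{cts}. The nonzero rows yield leading exponents $i_1<i_2<\cdots<i_t$ with $t=(m-1)(g-1)$. By Corollary \ref{cts-500000}, $\mathfrak a_\infty$ is \emph{not} an $\frac{m}{2}$-Weierstrass point if and only if $i_u=u+m/2-1$ for all $u$, i.e. the exponents form the consecutive block $m/2,m/2+1,\ldots,m/2+t-1$; any gap in that block witnesses a Weierstrass point. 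The computation then shows consecutive exponents for $X_0(34)$ at every tested $m$, whereas for $X_0(55)$ a gap appears for each $m=4,6,8,10$, which is exactly the content of the proposition.

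The genuine content here is computational rather than conceptual: the two curves are chosen to exhibit that the hierarchy of $\frac{m}{2}$-Weierstrass conditions is strictly finer than the classical one, with $X_0(55)$ separating the behavior at $m=2$ from that at $m\ge 4$. The one place where an error could creep in --- and hence the step to be most careful about --- is the interplay of $q$-expansion precision with the integrality of the reduction: one must carry at least $m/2+m(g-1)$ Fourier coefficients and perform the elimination over $\mathbb Z$ (or $\mathbb Q$), so that the leading-exponent pattern, and in particular the location of any gap, is detected exactly rather than being an artifact of truncation or of cancellation in inexact arithmetic.
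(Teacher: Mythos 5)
Your proposal is correct and follows essentially the same route as the paper: reduce to the leading-exponent (echelon form) criterion of Corollary \ref{cts-500000}, respectively Lemma \ref{mhd-9} (x) for $m=2$, justified by non-hyperellipticity via Ogg and Theorem \ref{cts-50000}, and then carry out the finite SAGE computation with $q$-expansions truncated at $m/2+m(g-1)$ terms. The paper's displayed data for $m=4$ confirms the patterns you predict (consecutive exponents $2,\dots,7$ for $X_0(34)$; a gap at $11$ and last exponent $14>13$ for $X_0(55)$).
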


\vskip .2in

For example,  let $m=4$. Then, for $X_0(34)$  the monomials are
\begin{align*}
f_0^2 & =q^{2}-4q^{5}-4q^{6}+12q^{8}+12q^{9}-2q^{10} \\
f_0f_1& = q^{3}-q^{5}-2q^{6}-2q^{7}+2q^{8}+5q^{9}+2q^{10} \\
f_0f_2 & = q^{4}-2q^{5}-q^{6}-q^{7}+6q^{8}+6q^{9}+2q^{10} \\
 -f_1^2 + f_0f_2& = -2q^{5}+q^{6}-q^{7}+5q^{8}+6q^{9}+4q^{10} \\
-f_1^2 + f_0f_2 + 2f_1f_2& =-3q^{6}-5q^{7}+11q^{8}+16q^{9}+2q^{10} \\
-f_1^2 + f_0f_2 + 2f_1f_2 + 3f_2^2 &= -17q^{7}+17q^{8}+34q^{9}+17q^{10} \\
\end{align*}
Their first exponents  are $\frac{m}{2}=4, 5, 6, \frac{m}{2}+ (m-1)(g-1)-1=7$ which shows that
$\mathfrak a_\infty$ is not $2$--Weierstrass point for $X_0(34)$.

\vskip .2in
For $X_0(55)$  the monomials are
\begin{align*}
 & f_0^2\\
 & f_0f_1\\
 & f_0f_2\\
 & f_0f_3\\
 & f_0f_4 \\
 & -f_1f_2 + f_0f_3\\
 & -f_1f_2 + f_0f_3 + 2f_2f_3\\
 & -f_1f_2 + f_0f_3 + 2f_2f_3 - f_3^2\\
 & -f_1f_2 + f_0f_3 + 2f_2f_3 - f_3^2 - 2f_3f_4\\
 &-f_1f_2 + f_0f_3 + 2f_2f_3 - f_3^2 - 2f_3f_4 + f_4^2\\
 & -f_1f_2 - f_2^2 + f_0f_3 + 2f_2f_3 - f_3^2 + f_0f_4 - 6f_3f_4 - f_4^2\\
&-f_2^2 + f_3^2 + f_0f_4 - f_2f_4 - 4f_3f_4 + 2f_4^2.\\
\end{align*}

\vskip .2in
Their $q$--expansions are given by the following expressions:

\begin{align*}
& q^{2}-2q^{8}-2q^{9}-2q^{10}+2q^{11}-4q^{13}+3q^{14}+4q^{15}+3q^{16}-2q^{17}+5q^{18} \\
& q^{3}-2q^{7}+q^{10}-2q^{11}+q^{12}-2q^{14}-4q^{16}+5q^{18} \\
& q^{4}-2q^{7}-q^{8}+3q^{9}+4q^{10}-4q^{11}-q^{13}-2q^{14}-3q^{15}-10q^{16}-2q^{17}+3q^{18} \\
& q^{5}-2q^{7}-q^{8}+3q^{9}+4q^{10}-4q^{11}-3q^{13}+q^{14}-q^{15}-11q^{16}-2q^{17}+5q^{18} \\
& q^{6}-2q^{11}-q^{12}-q^{13}-q^{14}+q^{15}-q^{16}+3q^{18} \\
& -2q^{7}+q^{8}+6q^{9}+q^{10}-10q^{11}-3q^{12}-5q^{13}+13q^{14}+21q^{15}-17q^{16}-8q^{17}-14q^{18} \\
& q^{8}+2q^{9}-5q^{10}-6q^{11}+19q^{12}+7q^{13}-13q^{14}-33q^{15}-7q^{16}+38q^{17}+14q^{18} \\
& 2q^{9}-q^{10}-4q^{11}+9q^{12}-5q^{13}+4q^{14}-13q^{15}-12q^{16}+18q^{17}+4q^{18} \\
& -q^{10}+11q^{12}-11q^{13}-7q^{15}-22q^{16}+22q^{17}+22q^{18} \\
& 11q^{12}-11q^{13}-11q^{15}-22q^{16}+22q^{17}+22q^{18} \\
&-22q^{13}+44q^{15}-44q^{16}+44q^{18} \\
&-22q^{14}+22q^{15}-22q^{16}+44q^{18} \\
\end{align*}

The last exponent is $14>  \frac{m}{2}+ (m-1)(g-1)-1=13$. So,  $\mathfrak a_\infty$ is not 
$2$--Weierstrass point for $X_0(55)$.

\section{ Wronskians of Modular Forms}\label{wron}

In this section we deal with a generalization of the usual notion of the Wronskian of cuspidal modular forms \cite{roh}, (\cite{ono}, 6.3.1),  (\cite{Muic}, the proof of Theorem 4-5),
and (\cite{Muic2}, Lemma 4-1). 

\vskip .2in

\begin{Lem}\label{wron-1} Let $f\in  M_m(\Gamma, \chi)$. Let $\gamma\in  \Gamma$.  Then, for $k\ge0$, $k$--th derivative of the function $f(\sigma.z)$ is given by
  $$
  \frac{d^{k}}{dz^k}f(\gamma.z) =\chi(\gamma)  j(\gamma, z)^{m+2k} \cdot \frac{d^{k}f(z)}{dz^{k}}+ \chi(\gamma) \sum_{i=0}^{k-1} D_{ik} \cdot j(\gamma, z)^{m+k+i} \cdot \frac{d^{i}f(z)}{dz^{i}}.
  $$
where $D_{ik}$ are some constants depending on $m$, $k$, and $\gamma$. If $\Gamma\subset SL_2(\mathbb Z)$, then the constants can be taken to be from $\mathbb Z$.
\end{Lem}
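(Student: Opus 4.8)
The plan is to prove the formula by induction on $k$, reading $\frac{d^k}{dz^k}f(\gamma.z)$ as the $k$--th derivative $\frac{d^kf}{dz^k}$ \emph{evaluated at the point} $\gamma.z$ (this is exactly the entry that appears when one substitutes $\gamma.z$ for $z$ into a Wronskian, which is the use to which the lemma will be put in this section). It is convenient to record the claim in the uniform shape
\begin{equation*}
\frac{d^kf}{dz^k}(\gamma.z)=\chi(\gamma)\sum_{i=0}^{k} D_{ik}\, j(\gamma,z)^{m+k+i}\,\frac{d^if(z)}{dz^i},\qquad D_{kk}=1,
\end{equation*}
noting that for $i=k$ the exponent $m+k+i$ equals $m+2k$, so this agrees with the stated formula (the top derivative carries $j(\gamma,z)^{m+2k}$, the lower ones $j(\gamma,z)^{m+k+i}$). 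The base case $k=0$ is precisely the transformation law $f(\gamma.z)=\chi(\gamma)j(\gamma,z)^m f(z)$, for which $D_{00}=1$.

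For the inductive step I would differentiate the level-$k$ identity with respect to $z$. Two elementary facts drive the computation: since $\gamma\in SL_2(\mathbb{R})$ has determinant one, $\frac{d}{dz}(\gamma.z)=j(\gamma,z)^{-2}$; and since $j(\gamma,z)=cz+d$ is linear, $\frac{d}{dz}j(\gamma,z)^p=pc\,j(\gamma,z)^{p-1}$. The chain rule turns the left-hand side into $\frac{d^{k+1}f}{dz^{k+1}}(\gamma.z)\cdot j(\gamma,z)^{-2}$, while differentiating the right-hand side termwise and multiplying through by $j(\gamma,z)^2$ yields
\begin{equation*}
\frac{d^{k+1}f}{dz^{k+1}}(\gamma.z)=\chi(\gamma)\sum_{i=0}^{k} D_{ik}\Bigl[(m+k+i)\,c\, j(\gamma,z)^{m+k+i+1}\tfrac{d^if(z)}{dz^i}+ j(\gamma,z)^{m+k+i+2}\tfrac{d^{i+1}f(z)}{dz^{i+1}}\Bigr].
\end{equation*}
The key bookkeeping point is that the two families of exponents close up correctly: the first bracketed term has exponent $m+(k+1)+i$ and contributes to the coefficient of $\frac{d^if}{dz^i}$, while the second has exponent $m+(k+1)+(i+1)$ and contributes to the coefficient of $\frac{d^{i+1}f}{dz^{i+1}}$. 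Collecting by the order of the derivative gives the level-$(k+1)$ identity with the recursion
\begin{equation*}
D_{l,k+1}=(m+k+l)\,c\,D_{l,k}+D_{l-1,k},
\end{equation*}
under the conventions $D_{-1,k}=0$ and $D_{l,k}=0$ for $l>k$; in particular $D_{k+1,k+1}=D_{kk}=1$, so the leading term keeps coefficient $j(\gamma,z)^{m+2(k+1)}$.

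Finally, the integrality statement is immediate from this recursion: if $\Gamma\subset SL_2(\mathbb{Z})$ then $c\in\mathbb{Z}$ and $m\in\mathbb{Z}$, so, starting from $D_{00}=1$, every $D_{ik}$ is an integer. I expect the only real subtlety to be the exponent bookkeeping in the inductive step—specifically, confirming that the extra factor $j(\gamma,z)^2$ supplied at each differentiation (which is where $\det\gamma=1$ enters, via $\frac{d}{dz}(\gamma.z)=j(\gamma,z)^{-2}$) is exactly what raises the leading exponent from $m+2k$ to $m+2(k+1)$ while keeping every lower term in the prescribed range $m+k+i$. Everything else is routine differentiation.
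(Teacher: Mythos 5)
Your proof is correct and follows essentially the same route as the paper, which simply invokes an easy induction on $k$ using $\frac{d}{dz}(\gamma.z)=j(\gamma,z)^{-2}$; you have filled in the exponent bookkeeping and the recursion $D_{l,k+1}=(m+k+l)\,c\,D_{l,k}+D_{l-1,k}$, which also cleanly yields the integrality claim. Your reading of the left-hand side as the $k$-th derivative of $f$ evaluated at $\gamma.z$ is the interpretation that both makes the stated formula true (as your $k=1$ check confirms) and is what is actually used in the Wronskian computation of Proposition \ref{wron-2}.
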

\begin{proof} This follows by an easy induction on $k$ using
  the fact that
 $$
\frac{d}{dz} \gamma.z=j(\gamma, z)^{-2}.
$$
See also the proof of Theorem 4-5 in\cite{Muic}, the text between the lines (4-6) and (4-8). 
\end{proof}

The following proposition is the main result of the present section:

\vskip .2in

\begin{Prop}\label{wron-2} Let $m\ge 1$. Then, for any sequence 
$f_1, \ldots, f_k\in M_m(\Gamma, \chi)$,
the Wronskian
$$
W\left(f_1, \ldots, f_k\right)(z)\overset{def}{=}\left|\begin{matrix}
f_1(z) &
\cdots & f_{k}(z) \\
\frac{df_1(z)}{dz} &
\cdots & \frac{df_{k}(z)}{dz} \\
&\cdots & \\
\frac{d^{k-1}f_1(z)}{dz^{k-1}} &
\cdots & \frac{d^{k-1}f_{k}(z)}{dz^{k-1}} \\
\end{matrix}\right|
$$
is a cuspidal modular form in $S_{k(m+k-1)}(\Gamma, \chi^k)$ if $k\ge 2$. If $f_1, \ldots, f_k$ are linearly independent,
then  $W\left(f_1, \ldots, f_k\right)\neq 0$.   
\end{Prop}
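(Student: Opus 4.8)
The plan is to establish three separate assertions: that $W(f_1,\ldots,f_k)$ transforms like a modular form of weight $k(m+k-1)$ and character $\chi^k$, that it is holomorphic on $\mathbb{H}$ and cuspidal, and that it is nonzero precisely when the $f_i$ are linearly independent. Holomorphy on $\mathbb{H}$ is immediate since each $f_i$ is holomorphic, hence so are all derivatives appearing in the determinant, and a determinant of holomorphic functions is holomorphic.

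For the modularity, I would first compute the transformation of the column vector $\left(f_i(\gamma.z),\, \tfrac{d}{dz}f_i(\gamma.z),\,\ldots,\,\tfrac{d^{k-1}}{dz^{k-1}}f_i(\gamma.z)\right)^{\mathrm{t}}$ under $\gamma\in\Gamma$. By Lemma \ref{wron-1}, each entry $\tfrac{d^\ell}{dz^\ell}f_i(\gamma.z)$ equals $\chi(\gamma)$ times a linear combination of $\tfrac{d^j f_i}{dz^j}(z)$, $0\le j\le \ell$, where the coefficient of the top derivative is $j(\gamma,z)^{m+2\ell}$ and the lower-order coefficients involve $j(\gamma,z)^{m+\ell+j}$. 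The key structural observation is that this linear combination is \emph{the same for every column} $i$, since the constants $D_{j\ell}$ depend only on $m,\ell,\gamma$, not on the individual form. Thus the matrix of derivatives of $(f_i(\gamma.z))$ is obtained from the matrix of derivatives of $(f_i(z))$ by left-multiplication by a single lower-triangular matrix $T(\gamma,z)$, together with the scalar $\chi(\gamma)$ pulled out of each of the $k$ columns. Taking determinants, $\chi(\gamma)$ contributes $\chi(\gamma)^k=\chi^k(\gamma)$, and $\det T(\gamma,z)$ is the product of its diagonal entries $\prod_{\ell=0}^{k-1} j(\gamma,z)^{m+2\ell}=j(\gamma,z)^{km + 2\binom{k}{2}}=j(\gamma,z)^{k(m+k-1)}$, giving the functional equation $W(f_1,\ldots,f_k)(\gamma.z)=\chi^k(\gamma)\,j(\gamma,z)^{k(m+k-1)}W(f_1,\ldots,f_k)(z)$.

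Next I would verify the behavior at the cusps, which is where one gains cuspidality rather than mere holomorphy. Conjugating by $\sigma\in SL_2(\mathbb{R})$ sending a cusp to $\infty$ and using the $q$-expansion of each $f_i$, the lowest-order term of the Wronskian can be analyzed: because differentiation with respect to $z$ multiplies $q^n=e^{2\pi\sqrt{-1}nz/h'}$ by a factor proportional to $n$, the determinant of the derivative matrix in the $q$-variable vanishes to strictly positive order at $q=0$ when $k\ge 2$ (even if the $f_i$ are only holomorphic and not cuspidal to begin with, the Wronskian structure forces vanishing), which yields the cuspidality and explains the hypothesis $k\ge2$. I expect \textbf{this cusp computation to be the main obstacle}, since it requires care with the weight-$k(m+k-1)$ automorphy factor and the precise order of vanishing in $q$. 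Finally, the nonvanishing statement is the classical Wronskian criterion: for holomorphic (equivalently, analytic) functions on the connected domain $\mathbb{H}$, $W(f_1,\ldots,f_k)\equiv 0$ if and only if $f_1,\ldots,f_k$ are linearly dependent over $\mathbb{C}$; so linear independence forces $W\neq0$. I would invoke this standard fact, noting that $\mathbb{H}$ is connected and the $f_i$ are holomorphic, to conclude.
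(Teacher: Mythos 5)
Your proposal is correct and follows essentially the same route as the paper: the functional equation via Lemma \ref{wron-1} (your lower-triangular matrix $T(\gamma,z)$ with diagonal $j(\gamma,z)^{m+2\ell}$ is exactly the structure the paper exploits), cuspidality by expanding at each cusp and observing that the rows coming from derivatives have no constant term in $q$, so the determinant vanishes to order at least $k-1\ge 1$, and nonvanishing by the classical Wronskian criterion for holomorphic functions on a connected domain. The only difference is presentational; the paper carries out the cusp computation you flag as the main obstacle in the same way, by absorbing the automorphy factors row by row into the Fourier expansions.
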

\begin{proof} This is a standard fact. We apply Lemma \ref{wron-1} to conclude
  $$
  W\left(f_1, \ldots, f_k\right)(\gamma. z)=\chi^k(\gamma) j(\gamma, z)^{k(m+k-1)} W\left(f_1, \ldots, f_k\right)(z), \ \ \gamma\in \Gamma, \ z\in \mathbb H.
  $$

 Let $x\in \bbR\cup \{\infty\}$ be a cusp for $\Gamma$. Let $\sigma\in SL_2(\bbR)$ such that
$\sigma.x=\infty$. We write 
$$
\{\pm 1\} \sigma \Gamma_{x}\sigma^{-1}= \{\pm 1\}\left\{\left(\begin{matrix}1 & lh'\\ 0 &
    1\end{matrix}\right); \ \ l\in \bbZ\right\},
$$
where $h'>0$ is the width of the cusp. Then we write the Fourier expansion of each $f_i$ at $x$ as follows:
$$
(f_i|_m \sigma^{-1})(\sigma.z)= \sum_{n=0}^\infty a_{n, i} \exp{\frac{2\pi
  \sqrt{-1}n\sigma.z}{h'}}.
$$

Using the cocycle identity
$$
1=j(\sigma^{-1}\sigma, z)=j(\sigma^{-1}, \sigma.z)j(\sigma, z),
$$
this implies the following:
$$
j(\sigma, z)^{m} \cdot f_i (z)=  \sum_{n=0}^\infty a_{n, i} \exp{\frac{2\pi \sqrt{-1}n\sigma.z}{h'}}.
$$

\vskip .1in

By induction on $t\ge 0$, using
$$
\frac{d}{dz} \sigma.z=j(\sigma, z)^{-2},
$$
we have the following:

\begin{equation}\label{wron-3}
j(\sigma, z)^{m+2t} \frac{d^{t}f_i(z)}{dz^{t}} + \sum_{u=0}^{t-1} D_{i, t} j(\sigma, z)^{m+t+u} \frac{d^{u}f_i(z)}{dz^{u}}=  \sum_{n=0}^\infty a_{n, i,t} \exp{\frac{2\pi
    \sqrt{-1}n\sigma.z}{h'}},
\end{equation}
for some complex numbers $D_{i, t}$ and $a_{n, i,t}$, where
$$
a_{0, i, t}=0, \ \ t\ge 1.
$$

\vskip .2in

Now, by above considerations, using (\ref{wron-3}),  we have 
\begin{align*}
  \left(W\left(f_1, \ldots, f_k\right)|_{k(m+k-1)} \sigma^{-1}\right)  (\sigma. z)
  &=j(\sigma, z)^{k(m+k-1)} W\left(f_1, \ldots, f_k\right)(z)\\
&= \left|\begin{matrix}
 j(\sigma, z)^{m} f_1(z) &
\cdots & j(\sigma, z)^{m} f_{k}(z) \\
j(\sigma, z)^{m+2} \frac{df_1(z)}{dz} &
\cdots & j(\sigma, z)^{m+2} \frac{df_{k}(z)}{dz} \\
&\cdots & \\
j(\sigma, z)^{m+2(k-1)}\frac{ d^{k-1}f_1(z)}{dz^{k-1}} &
\cdots & j(\sigma, z)^{m+2(k-1)} \frac{d^{k-1}f_{k}(z)}{dz^{k-1}} \\
  \end{matrix}\right|\\
  &= \det{\left( \sum_{n=0}^\infty a_{n, i+1,t} \exp{\frac{2\pi
        \sqrt{-1}n\sigma.z}{h'}}\right)_{0\le i, t\le k-1}}\\
\end{align*}

Now, we see that the Wronskian is holomorphic at each cup of $\Gamma$ and vanishes at the order at least $k-1$. In particular, it belongs to
$S_{k(m+k-1)}(\Gamma, \chi^k)$ if $k\ge 2$.

The claim that linear independence is equivalent to  the fact that Wronskian is not identically zero is standard (\cite{Miranda}, Chapter VII, Lemma 4.4).  
\end{proof}

\vskip .2in
We end this section with an  elementary remark regarding Wronskians. 
In the case when $\Gamma$ has a cusp at the infinity $\mathfrak a_\infty$, it is more convenient to use the derivative with respect to 
$$
q=\exp{\frac{2\pi  \sqrt{-1} z}{h}},
$$
where $h>0$ is the width of the cusp since all modular forms have $q$--expansions. Using the notation from Proposition \ref{wron-2}. It is easy to see
$$
\frac{d }{dz}= \frac{2\pi  \sqrt{-1} }{h} \cdot  q\frac{d }{dq}.
$$
This implies that

$$
\frac{d^k }{dz^k}= \left(\frac{2\pi  \sqrt{-1} }{h}\right)^k \cdot  \left(q\frac{d }{dq}\right)^k, \ \ k\ge 0.
$$

\vskip .2in 
Thus, we may define the $q$--Wronskian as follows:

\begin{equation}\label{wron-5000}
W_q\left(f_1, \ldots, f_k\right)\overset{def}{=}
 \left|\begin{matrix}
f_1 & \cdots & f_{k} \\
q\frac{d}{dq} f_1&
\cdots & q\frac{d }{dq} f_k\\
&\cdots & \\
\left(q\frac{d }{dq}\right)^{k-1} f_1&
\cdots & \left(q\frac{d }{dq}\right)^{k-1} f_k  \\
\end{matrix}\right|,
 \end{equation}
 considering $q$--expansions of $f_1, \ldots, f_k$.
 
\vskip .2in 

We obtain
\begin{equation}\label{wron-5001}
W\left(f_1, \ldots, f_k\right)=\left(\frac{2\pi  \sqrt{-1} }{h}\right)^{k(k-1)/2} W_q\left(f_1, \ldots, f_k\right).
 \end{equation}

\section{On a Divisor of a Wronskian} \label{wron-cont}

In this section we discuss the divisor of cuspidal modulars forms constructed via Wronskians (see Proposition \ref{wron-2}). We start with necessary
preliminary results.

\vskip .2in 

\begin{Lem} \label{wron-cont-1}
  Let $\varphi_1, \cdots, \varphi_l$ be a  sequence of linearly independent meromorphic functions on some open set $U\subset \mathbb C$. We define their Wronskian
  as usual $W(\varphi_1, \cdots, \varphi_k)= \det{\left(\frac{d^{i-1}\varphi_j}{dz^{i-1}} \right)_{i,j=1, \ldots, k}}$. Then, we have the following:
  \begin{itemize}
  \item[(i)]  The Wronskian $W(\varphi_1, \cdots, \varphi_k)$ is a non--zero meromorphic function on $U$.
  \item[(ii)] We have $W(\varphi_1, \cdots, \varphi_k)=\varphi^k W(\varphi_1/\varphi, \cdots, \varphi_{k}/\varphi)$ for all non--zero meromorphic functions $\varphi$ on $U$.
  \item[(iii)] Let $\xi\in U$ be such that all $\varphi_i$ are holomorphic. Let $A$  be the $\mathbb C$--span of all $\varphi_i$. Then, all $\varphi\in A$ are holomorphic at $\xi$,
    and the set $\{\nu_{z-\xi}(\varphi); \ \ \varphi\in A, \ \varphi\neq 0\}$ has exactly $k=\dim A$ different elements
    (Here as in Section \ref{prelim}, $\nu_{z-\xi}$ stands for the order at $\xi$.). Let
    $\nu_{z-\xi}(\varphi_1,\ldots, \varphi_k)$ be the sum of all $\dim A$--values of that set.   Then,  $W(\varphi_1, \cdots, \varphi_k)$ is holomorphic at $\xi$, and the
    corresponding order is
    $$
    \nu_{z-\xi}(\varphi_1,\ldots, \varphi_k)- \frac{k(k-1)}{2} 
    $$
    \end{itemize}
  \end{Lem}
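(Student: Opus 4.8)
The plan is to prove the three parts in order, treating (i) and (ii) as essentially formal and concentrating the real work on (iii). For part (i), I would argue the contrapositive of the standard Wronskian criterion: if $W(\varphi_1,\ldots,\varphi_k)$ vanished identically on $U$, then the functions would satisfy a nontrivial linear dependence relation with meromorphic coefficients, and—after clearing to a common connected open subset where all functions are holomorphic and passing to the field of meromorphic functions on $U$—this contradicts their linear independence over $\mathbb{C}$. Since the paper already invokes $(\mathrm{Miranda}, \text{Chapter VII, Lemma 4.4})$ for exactly this equivalence in the proof of Proposition \ref{wron-2}, I would simply cite that result. For part (ii), the identity $W(\varphi_1,\ldots,\varphi_k)=\varphi^k\,W(\varphi_1/\varphi,\ldots,\varphi_k/\varphi)$ is a classical Wronskian factorization; I would verify it by column operations on the determinant, using the Leibniz rule to expand $\frac{d^{i-1}}{dz^{i-1}}(\varphi\cdot(\varphi_j/\varphi))$ and observing that the resulting lower-triangular change of basis in the rows contributes exactly the factor $\varphi^k$ and leaves the Wronskian of the quotients.

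The substance is part (iii). The key observation is that the claim is purely local at $\xi$, so I would pass to a local coordinate centered at $\xi$ (i.e.\ work with the variable $z-\xi$) and study the $\mathbb{C}$-vector space $A=\mathrm{span}_{\mathbb{C}}(\varphi_1,\ldots,\varphi_k)$ of holomorphic functions near $\xi$. First I would establish that the order-of-vanishing function $\varphi\mapsto\nu_{z-\xi}(\varphi)$ takes exactly $k=\dim A$ distinct values on $A\setminus\{0\}$. This follows from a standard gap argument: the orders of vanishing of nonzero elements form a finite set, and if two independent elements had the same order one could subtract a scalar multiple to raise the order, so by iterating one produces a basis $\psi_1,\ldots,\psi_k$ of $A$ whose orders $n_1<n_2<\cdots<n_k$ are strictly increasing and are precisely the $k$ attained values. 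Call their sum $\nu_{z-\xi}(\varphi_1,\ldots,\varphi_k)=\sum_{j} n_j$; note that by part (i) the Wronskian is independent of the chosen basis up to a nonzero constant (a change of basis multiplies the determinant by the nonzero determinant of the transition matrix), so I may compute $W$ using the adapted basis $\psi_1,\ldots,\psi_k$.

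With the adapted basis in hand, the computation of the order of $W(\psi_1,\ldots,\psi_k)$ at $\xi$ becomes a direct leading-term analysis. Writing $\psi_j=c_j(z-\xi)^{n_j}+(\text{higher order})$ with $c_j\neq 0$, each entry $\frac{d^{i-1}\psi_j}{dz^{i-1}}$ has leading term a nonzero multiple of $(z-\xi)^{n_j-(i-1)}$. The plan is to factor $(z-\xi)^{n_j-(k-1)}$ out of the $j$-th column and expand: the leading coefficient of the determinant is then, up to a nonzero constant, a Vandermonde-type determinant in the exponents $n_1,\ldots,n_k$, which is nonzero precisely because the $n_j$ are distinct. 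Hence $W$ is holomorphic at $\xi$ and its order equals $\sum_{j}\bigl(n_j-(k-1)\bigr)+\binom{k}{2}$; combining the total column shift gives order $\sum_j n_j-\sum_{i=1}^{k}(i-1)=\nu_{z-\xi}(\varphi_1,\ldots,\varphi_k)-\frac{k(k-1)}{2}$.

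The main obstacle I anticipate is making the final leading-term determinant argument rigorous rather than merely heuristic: one must confirm that the putative leading coefficient of $W$ does not accidentally vanish. This is where the strict inequalities $n_1<\cdots<n_k$ are essential, since they guarantee the associated generalized Vandermonde determinant $\det\bigl((n_j)^{\underline{i-1}}\bigr)_{i,j}$ (with falling factorials from the repeated differentiation) is nonzero. I would isolate this as the crucial computational lemma and verify it by recognizing the matrix of falling-factorial entries as a product of the ordinary Vandermonde matrix with a unipotent change of basis relating powers to falling factorials, so its determinant is $\prod_{i<j}(n_j-n_i)\neq 0$.
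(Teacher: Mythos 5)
Your proposal is correct and follows essentially the same route as the paper: (i) via the standard Wronskian criterion, (ii) as a formal factorization identity, and (iii) by passing to an adapted basis $\psi_1,\ldots,\psi_k$ of $A$ with strictly increasing orders $n_1<\cdots<n_k$, computing the order of $W(\psi_1,\ldots,\psi_k)$, and transferring back via the change-of-basis identity $W(\varphi_1,\ldots,\varphi_k)=\det A\cdot W(\psi_1,\ldots,\psi_k)$. The only real difference is that where the paper simply cites Farkas--Kra, Proposition III.5.8 (and the text preceding it) for both the existence of the adapted basis and the order formula, you unpack these citations into explicit arguments (the gap argument and the generalized Vandermonde determinant with falling-factorial entries); that is a legitimate, self-contained alternative. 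One small caution: ``factoring $(z-\xi)^{n_j-(k-1)}$ out of the $j$-th column'' can involve negative exponents when $n_j<k-1$, so it is cleaner to argue via the permutation expansion of the determinant, in which every term has order at least $\sum_j n_j-\binom{k}{2}$ and the coefficient of $(z-\xi)^{\sum_j n_j-\binom{k}{2}}$ equals $\prod_j c_j\cdot\det\bigl(n_j^{\underline{i-1}}\bigr)_{i,j}=\prod_j c_j\prod_{i<j}(n_j-n_i)\neq 0$; this also gives holomorphy at $\xi$ since $\sum_j n_j\geq\binom{k}{2}$ for distinct nonnegative integers $n_j$.
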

\begin{proof} (i) is  well--known. See for example  (\cite{Miranda}, Chapter VII, Lemma 4.4) or it is a consequence of \cite[Proposition III.5.8]{FK}. (ii) is a consequence of the proof of
  \cite[Proposition III.5.8]{FK} (see formula (5.8.4)).   Finally, we prove (iii). Then,  by the text
  before the statement of \cite[Proposition III.5.8]{FK}, we see that we can select another basis $\psi_1, \ldots, \psi_k$ of $A$ such that
  $$
  \nu_{z-\xi}(\psi_1)< \nu_{z-\xi}(\psi_2)< \ldots< \nu_{z-\xi}(\psi_k).
  $$
  Then, by \cite[Proposition III.5.8]{FK}, we have that the order of $W(\psi_1, \ldots, \psi_k)$ at $z$ is equal to
  $$
    \sum_{i=1}^k \left( \nu_{z-\xi}(\psi_i) -i+1\right)= \nu_{z-\xi}(\varphi_1,\ldots, \varphi_k)- \frac{k(k-1)}{2}. 
    $$
    But    $\varphi_1, \ldots, \varphi_k$ is also a basis of $A$. Thus, we see that we can write
    $$
    \left(\begin{matrix}
      \varphi_1\\
      \varphi_2\\
      \vdots\\
      \varphi_k
      \end{matrix}\right)
    = A\cdot
    \left(\begin{matrix}
      \psi_1 \\
      \psi_2\\
      \vdots\\
      \psi_k
      \end{matrix}\right)
 $$   
 for some $A\in GL_k(\mathbb C)$. This implies
 $$
  \left(\begin{matrix}
      \varphi_1 &  d\varphi_1/dz& \cdots & d^{k-1}\varphi_1/dz^{k-1}\\
      \varphi_2 &  d\varphi_2/dz& \cdots & d^{k-1}\varphi_2/dz^{k-1}\\
      \vdots\\
      \varphi_k &  d\varphi_k/dz& \cdots & d^{k-1}\varphi_k/dz^{k-1}
      \end{matrix}\right)
    = A\cdot \left(\begin{matrix}
      \psi_1 &  d\psi_1/dz& \cdots & d^{k-1}\psi_1/dz^{k-1}\\
      \psi_2  & d\psi_2/dz& \cdots & d^{k-1}\psi_2/dz^{k-1}\\
      \vdots\\
      \psi_k  & d\psi_k/dz& \cdots & d^{k-1}\psi_k/dz^{k-1}
      \end{matrix}\right).
    $$
    Hence
    $$
    W(\varphi_1, \ldots, \varphi_k) =\det A \cdot W(\psi_1, \ldots, \psi_k)
    $$
    has the same order at $z$ as $W(\psi_1, \ldots, \psi_k)$.
  \end{proof}

\vskip .2in
As a direct consequence of Lemma \ref{wron-cont-1}, we obtain the following result. At this point the reader should review the text in Section \ref{prelim} before the statement of Lemma \ref{prelim-1}
as well as Proposition \ref{wron-2}. The proof is left to the reader as an exercise. 

\begin{Prop}\label{wron-cont-2}
  Assume that $m\ge 2$ is even. Let  $f_1, \ldots, f_k\in M_m(\Gamma)$ be a sequence of linearly independent modular forms. Let $\xi \in \mathbb H$. Then, we have the following:
  $$
  \nu_{\mathfrak a_\xi}\left(W(f_1,\ldots, f_k)\right)=\frac{1}{e_\xi} \cdot \left( \nu_{z-\xi}(\varphi_1,\ldots, \varphi_k)- \frac{k(k-1)}{2}\right).
  $$
  \end{Prop}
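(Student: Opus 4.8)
The plan is to reduce everything to Lemma \ref{wron-cont-1}(iii) by identifying the modular-form Wronskian of Proposition \ref{wron-2} with the ordinary function-theoretic Wronskian in a neighborhood of $\xi$, and then accounting for the factor $1/e_\xi$ that is built into the definition of $\nu_{\mathfrak a_\xi}$. Here the symbols $\varphi_1,\ldots,\varphi_k$ should be read as the functions $f_1,\ldots,f_k$ regarded as holomorphic functions of $z$ near $\xi$, so that $\nu_{z-\xi}(\varphi_1,\ldots,\varphi_k)$ is the sum of the $k$ distinct orders at $\xi$ of the $\mathbb{C}$--span of the $f_i$, exactly as in Lemma \ref{wron-cont-1}(iii).

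First I would fix a connected open $U\subset\mathbb{H}$ containing $\xi$ on which all the $f_i$ are holomorphic; this is automatic, since elements of $M_m(\Gamma)$ are holomorphic on all of $\mathbb{H}$. The one point needing a word is that $f_1,\ldots,f_k$, being linearly independent in $M_m(\Gamma)$, remain linearly independent as holomorphic functions on $U$: any linear relation holding on the open set $U$ extends to all of $\mathbb{H}$ by the identity theorem (as $\mathbb{H}$ is connected), hence is a relation in $M_m(\Gamma)$ and so is trivial. Thus the hypotheses of Lemma \ref{wron-cont-1} are met with $\varphi_i=f_i|_U$, and in particular $W(f_1,\ldots,f_k)\neq 0$ (which also follows from Proposition \ref{wron-2}), so all orders below are finite.

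Next I would observe that the Wronskian $W(f_1,\ldots,f_k)(z)$ of Proposition \ref{wron-2}, being defined through the very same derivatives $d^{j}/dz^{j}$, is literally equal on $U$ to the Wronskian $W(\varphi_1,\ldots,\varphi_k)$ of Lemma \ref{wron-cont-1}. Hence their orders as functions of $z$ at $\xi$ coincide, and Lemma \ref{wron-cont-1}(iii) gives
$$
\nu_{z-\xi}\left(W(f_1,\ldots,f_k)\right)=\nu_{z-\xi}(\varphi_1,\ldots,\varphi_k)-\frac{k(k-1)}{2}.
$$
Finally, $W(f_1,\ldots,f_k)$ is a nonzero element of $S_{k(m+k-1)}(\Gamma)$ by Proposition \ref{wron-2}, so by the definition of the order of a modular form at $\mathfrak a_\xi$ recalled in Section \ref{prelim}, namely $\nu_{\mathfrak a_\xi}(F)=\nu_{z-\xi}(F)/e_\xi$, dividing the displayed identity by $e_\xi$ yields the asserted formula.

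There is essentially no hard step: the geometric content is already packaged in Lemma \ref{wron-cont-1}(iii), and the proposition is merely its translation into the language of modular forms. The only things to be careful about are the transfer of linear independence from $M_m(\Gamma)$ to functions on $U$ and the bookkeeping of the elliptic factor $1/e_\xi$ (which is also why the result need not be an integer at an elliptic $\xi$, consistently with Lemma \ref{prelim-1}(iii)); this is precisely why the authors can leave it as an exercise.
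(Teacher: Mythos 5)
Your proof is correct and follows exactly the route the paper intends: the authors explicitly present Proposition \ref{wron-cont-2} as a direct consequence of Lemma \ref{wron-cont-1}(iii) combined with the definition $\nu_{\mathfrak a_\xi}(f)=\nu_{z-\xi}(f)/e_\xi$ from Section \ref{prelim} and the fact from Proposition \ref{wron-2} that the Wronskian is a nonzero modular form, leaving the details as an exercise. Your write-up supplies precisely those details (including the transfer of linear independence via the identity theorem), so there is nothing to add.
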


\vskip .2in
The case of a cusp requires a different technique but final result is similar:

\begin{Thm}\label{wron-6} Assume that $m\ge 2$ is even. Suppose that $\mathfrak a_\infty$ is a cusp for $\Gamma$.
  Let  $f_1, \ldots, f_k\in M_m(\Gamma)$ be a sequence of linearly independent modular forms. Let $i\in \{1, \ldots, k\}$.   Consider
  $f_1, \ldots, f_k$ as meromorphic functions in a variable $q$ in a neighborhood of $q=0$, and define $\nu_{q-0}\left(f_1, \ldots, f_k\right)$ as 
 in Lemma \ref{wron-cont-1} (iii). Then, we have the following identity:
  $$
  \nu_{\mathfrak a_\infty}\left(W\left(f_1, \ldots, f_k\right)\right)=    \nu_{q-0}\left(f_1, \ldots, f_k\right).
  $$
 \end{Thm}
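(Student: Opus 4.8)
The plan is to transport the entire computation to the local coordinate $q=\exp(2\pi\sqrt{-1}z/h)$ at the cusp $\mathfrak a_\infty$ and then feed the result into Lemma \ref{wron-cont-1} (iii). First I would use (\ref{wron-5001}) to write
$$
W\left(f_1, \ldots, f_k\right) = \left(\frac{2\pi\sqrt{-1}}{h}\right)^{k(k-1)/2} W_q\left(f_1, \ldots, f_k\right),
$$
where $W_q$ is the Wronskian built from the Euler operator $D=q\,d/dq$ acting on the $q$--expansions, as in (\ref{wron-5000}). Since the scalar factor is non--zero and $\nu_{\mathfrak a_\infty}$ of a modular form is by definition the order of vanishing of its $q$--expansion at $q=0$, this reduces the theorem to the identity $\mathrm{ord}_{q=0}\left(W_q\left(f_1, \ldots, f_k\right)\right)=\nu_{q-0}\left(f_1, \ldots, f_k\right)$.

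The crucial point, and the reason the correction term $-k(k-1)/2$ appearing in Proposition \ref{wron-cont-2} is absent here, is the passage from the Euler operator $D=q\,d/dq$ to the ordinary derivative $d/dq$. Expanding $D^i=\sum_{l=0}^{i}S(i,l)\,q^l\,(d/dq)^l$ (Stirling numbers of the second kind, with $S(i,i)=1$), the matrix expressing the rows $f_j, Df_j, \ldots, D^{k-1}f_j$ in terms of $f_j,(d/dq)f_j,\ldots,(d/dq)^{k-1}f_j$ is lower triangular with diagonal entries $q^0,q^1,\ldots,q^{k-1}$ and is independent of $j$. Taking determinants yields
$$
W_q\left(f_1, \ldots, f_k\right)=q^{k(k-1)/2}\,\widetilde W\left(f_1, \ldots, f_k\right),
$$
where $\widetilde W$ denotes the ordinary Wronskian in the variable $q$, i.e. exactly the object to which Lemma \ref{wron-cont-1} applies.

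Finally I would invoke Lemma \ref{wron-cont-1} (iii) at $\xi=0$: the $f_i$, viewed as functions of $q$, are holomorphic at $q=0$ because they lie in $M_m(\Gamma)$, and they remain linearly independent since a modular form is determined by its $q$--expansion. Hence $\mathrm{ord}_{q=0}\big(\widetilde W\left(f_1, \ldots, f_k\right)\big)=\nu_{q-0}\left(f_1, \ldots, f_k\right)-k(k-1)/2$. Combining this with the previous display gives $\mathrm{ord}_{q=0}\left(W_q\right)=\nu_{q-0}\left(f_1, \ldots, f_k\right)$, the two factors of $k(k-1)/2$ cancelling, which together with the first step proves the theorem. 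I expect the only genuinely non--formal step to be the triangular comparison of the two Wronskians in the second paragraph; everything else is bookkeeping of orders at $q=0$.
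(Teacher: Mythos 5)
Your proof is correct, and it takes a genuinely different route from the paper's. The paper first normalizes via Lemma \ref{wron-cont-1} (ii), writing $W(f_1,\ldots,f_k)=f_1^k\,W(1,f_2/f_1,\ldots,f_k/f_1)$, observes that $1,f_2/f_1,\ldots,f_k/f_1$ are elements of $\mathbb C\left(\mathfrak R_\Gamma\right)$, and interprets their Wronskian as a global meromorphic $k(k-1)/2$--differential $W_\Gamma$ on $\mathfrak R_\Gamma$ (via Miranda, Ch.~VII, Lemma 4.9); the shift by $k(k-1)/2$ is then extracted from the transition factor $\left(dq/dz\right)^{k(k-1)/2}$ between the charts $z$ and $q$ at the cusp, and everything is reassembled using parts (ii) and (iii) of Lemma \ref{wron-cont-1}. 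You instead stay entirely in the $q$--chart: (\ref{wron-5001}) converts $d/dz$ into the Euler operator $D=q\,d/dq$ up to the nonzero scalar $\left(2\pi\sqrt{-1}/h\right)^{k(k-1)/2}$, and the lower--triangular identity $D^i=\sum_{l}S(i,l)\,q^l\,(d/dq)^l$ with unit diagonal entries gives $W_q=q^{k(k-1)/2}\,\widetilde W$ by taking determinants, after which Lemma \ref{wron-cont-1} (iii) applied at $\xi=0$ in the variable $q$ produces exactly the cancelling $-k(k-1)/2$. Your version is more elementary and self--contained: it avoids the division by $f_1^k$ (and hence any case analysis about where $f_1$ vanishes) and the global differential form $W_\Gamma$, at the cost of the explicit operator computation; the paper's version buys the global object $W_\Gamma$, which is in the spirit of the rest of Sections \ref{mhd} and \ref{wron-cont}. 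One point worth stating explicitly in your write--up, which you do touch on, is that linear independence of $f_1,\ldots,f_k$ as functions on $\mathbb H$ implies linear independence of their $q$--expansions (identity theorem), so that Lemma \ref{wron-cont-1} (iii) is indeed applicable at $q=0$.
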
 
\begin{proof} By Lemma \ref{wron-cont-1} (ii),  we can write 
\begin{equation}\label{wron-5} 
W\left(f_1, \ldots, f_k\right) =f_1^k \cdot W(1, f_2/f_1, \ldots, f_k/f_1)
\end{equation}
as meromorphic functions on $\mathbb H$.  But the key fact that $1, f_2/f_1, \ldots, f_k/f_1$  
can be regarded as meromorphic (rational) functions on $\mathfrak R_\Gamma$ i.e., they are elements of 
$\mathbb C\left(\mathfrak R_\Gamma\right)$.

The key point now is that these meromorphic functions, and their Wronskian define meromorphic  $k(k-1)/2$--differential form,
denoted by $W_\Gamma$. Details are contained in \cite[Section 4, Lemma 4.9]{Miranda}.  We recall the following.

Let $w\in \mathbb H$ be a non--elliptic point for $\Gamma$,
such that $f_1\neq 0$,  and $U\subset \mathbb H$ small neighborhood of $w$ giving a chart of $\mathfrak a_w$ on the curve $\mathfrak R_\Gamma$. Then, in the chart $U$ we have:
$$
W_\Gamma= W(1, f_2(z)/f_1(z), \ldots, f_k(z)/f_1(z)) \left(dz\right)^{k(k-1)/2}.
$$
On the other hand, in a chart of $\mathfrak a_\infty$, $W_\Gamma$  is given by
the usual Wronskian, denoted by $W_{\Gamma, q}(1, f_2/f_1, \ldots, f_k/f_1)$, 
of $1, f_2/f_1, \ldots, f_k/f_1$ presented by $q$--expansions with respect to the derivatives $d^i/dq^i$,  $0\le i\le k-1$, multiplied by 
$\left(dq\right)^{k(k-1)/2}$ i.e.,
$$
W_\Gamma= W(1, f_2/f_1, \ldots, f_k/f_1) \left(dq\right)^{k(k-1)/2}.
$$

 Next, we insert $q$-expansions of $f_1, \ldots, f_k$ into $W(1, f_2/f_1, \ldots, f_k/f_1)$. So, we can express
  $$
  W(1, f_2/f_1, \ldots, f_k/f_1)=c_mq^m+ c_{m+1} q^{m+1}+\cdots,
  $$
  where $c_m\neq 0$, $c_{m+1}, c_{m+2}, \ldots$ are complex numbers. Hence,
  \begin{equation}\label{wron-6000}
   \nu_{\mathfrak a_\infty}\left(W(1, f_2/f_1, \ldots, f_k/f_1)\right) = \nu_{q-0}\left( W(1, f_2/f_1, \ldots, f_k/f_1)\right)=m.
    \end{equation}

  Let us fix a neighborhood $U$ of $\infty$ such that it is  a chart for  $\mathfrak a_\infty$, and there is no elliptic points in it. Then, we fix
  $w\in U$, $w\ne \infty$, and a chart $V$ of $w$ such that $V\subset \mathbb H\cap U$. Now,   on $V$, we have   the following expression for $W_\Gamma$:
  $$
   W(1, f_2/f_1, \ldots, f_k/f_1) \left(dz\right)^{k(k-1)/2}=\left(c_mq^m+ c_{m+1} q^{m+1}+\cdots\right) \left(dz\right)^{k(k-1)/2}, \ \ q=\exp{\frac{2\pi  \sqrt{-1} z}{h}}.
  $$
  On the other hand, on $U$, we must have the expression for $W_\Gamma$ of the form
  $$
  W_{\Gamma, q}(1, f_2/f_1, \ldots, f_k/f_1) \left(dq\right)^{k(k-1)/2}=  \left(d_nq^n+ d_{n+1} q^{m+1}+\cdots\right) \left(dq\right)^{k(k-1)/2},
  $$
  where $d_n\neq 0$, $d_{n+1}, d_{n+2}, \ldots$ are complex numbers. We have 
  \begin{equation}\label{wron-6001}
     \nu_{\mathfrak a_\infty}\left(W_\Gamma\right)=   \nu_{q-0}\left( W_{\Gamma, q}(1, f_2/f_1, \ldots, f_k/f_1)\right)=n.
    \end{equation}

  By definition of meromorphic $k(k-1)/2$--differential, on $V$ these expressions must be related by
  $$
  c_mq^m+ c_{m+1} q^{m+1}+\cdots =  \left(d_nq^n+ d_{n+1} q^{m+1}+\cdots\right) \cdot \left(\frac{dq}{dz}\right)^{k(k-1)/2} .
  $$
  Hence, we obtain
  $$
  n=m-k(k-1)/2.
  $$
  Using (\ref{wron-6000}) and (\ref{wron-6001}) this can be written as follows: 
  $$
  \nu_{q-0}\left( W_{\Gamma, q}(1, f_2/f_1, \ldots, f_k/f_1)\right)=  \nu_{q-0}\left( W(1, f_2/f_1, \ldots, f_k/f_1)\right) -k(k-1)/2.
  $$

  Consider again    $f_1, \ldots, f_k$ as meromorphic functions in a variable $q$ in a neighborhood of $q=0$, and define
  the Wronskian $W_{\Gamma, q}(f_1, \ldots, f_k)$ using derivatives with respect to $q$. Then, Lemma \ref{wron-cont-1} (ii) implies
  \begin{align*}
    \nu_{\mathfrak a_\infty}\left(W\left(f_1, \ldots, f_k\right)\right)&= \nu_{\mathfrak a_\infty}\left(f^k_1\right)+
    \nu_{\mathfrak a_\infty}\left(W\left(1, f_2/f_1, \ldots, f_k/f_1\right)\right)\\
    &= \nu_{q-0}\left(f^k_1\right)+
    \nu_{q-0}\left(W\left(1, f_2/f_1, \ldots, f_k/f_1\right)\right)\\
    &= \nu_{q-0}\left(f^k_1\right)+
    \nu_{q-0}\left( W_{\Gamma, q}(1, f_2/f_1, \ldots, f_k/f_1)\right)+k(k-1)/2\\
    &= \nu_{q-0}\left( W_{\Gamma, q}(f_1, f_2, \ldots, f_k)\right)+k(k-1)/2.
    \end{align*}
  Finally, we apply Lemma \ref{wron-cont-1} (iii).  
   \end{proof}

\section{Computation of Wronskians for $\Gamma=SL_2(\mathbb Z)$}\label{lev0}

Assume that $m\ge 4$ is an even integer. 
Let  $M_m$  be the space of  all modular forms of weight $m$
for $SL_2(\mathbb Z)$. We introduce the two Eisenstein series
\begin{align*}
&E_4(z)=1+240 \sum_{n=1}^\infty \sigma_3(n)q^n\\
&E_6(z)=1 -504 \sum_{n=1}^\infty \sigma_5(n)q^n\\
\end{align*}
of weight $4$ and $6$, where $q=\exp{(2\pi i z)}$. Then, for any even integer $m\ge 4$, we have
\begin{equation}\label{lev0-1}
M_m=\oplus_{\substack{\alpha, \beta\ge 0\\ 4\alpha+6\beta =m }} \mathbb C E^\alpha_4 E^\beta_6.
\end{equation}
We have

\begin{equation}\label{lev0-2}
k=k_m\overset{def}{=} \dim M_m=\begin{cases} \left[m/12\right]+1, \ \ m\not\equiv \ 2 \text{(mod $12$)};\\
\left[m/12\right], \ \ m\equiv \ 2 \text{(mod $12$)}.\\
\end{cases}
\end{equation}

\vskip .2in
We let 
$$
\Delta(z)=q+\sum_{n=2}\tau(n)q^n=q -24q^2+252 q^3+\cdots =\frac{E_4^3(z)-E_6^2(z)}{1728}. 
$$
be the Ramanujan delta function.

\vskip .2in
It is well--known that the map $f\longmapsto f\cdot \Delta$ is an ismorphism between the vector space of modular form $M_m$ and the space of all cuspidal modular forms $S_{m+12}$ inside
$M_{m+12}$. In general, we have the following:

$$
\dim S_m=\dim M_m-1,\\
$$
for all even integers $m\ge 4$.

\vskip .2in
Now, we are ready to compute our first Wronskian (see (\ref{wron-5000}) for notation).
\vskip .2in

\begin{Prop}\label{lev0-3}  We have the following:
  \begin{itemize}
  \item[(i)] $W_q\left(E^3_4, E^2_6\right)=-1728\cdot \Delta \cdot  E^2_4 E_6$.  
    \item[(ii)]  $2 E_4  \frac{d}{dq}E_6  -  3  E_6 \frac{d}{dq} E_4=-1728 \cdot \Delta\cdot q^{-1}$.
  \end{itemize}
  \end{Prop}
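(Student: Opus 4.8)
The plan is to notice that both identities are governed by a single scalar relation involving the Ramanujan theta operator $\theta = q\,\frac{d}{dq}$, and to reduce everything to that relation. First I would use that $\theta$ is a derivation, so $\theta(E_4^3) = 3 E_4^2\,\theta E_4$ and $\theta(E_6^2) = 2 E_6\,\theta E_6$. Expanding the $2\times 2$ determinant in (\ref{wron-5000}) then gives
$$
W_q\left(E_4^3, E_6^2\right) = E_4^3\,\theta(E_6^2) - E_6^2\,\theta(E_4^3) = E_4^2 E_6\left(2 E_4\,\theta E_6 - 3 E_6\,\theta E_4\right).
$$
Thus (i) is equivalent to the scalar identity $2 E_4\,\theta E_6 - 3 E_6\,\theta E_4 = -1728\,\Delta$. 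Moreover, since $\frac{d}{dq} = q^{-1}\theta$, the left-hand side of (ii) equals $q^{-1}\left(2 E_4\,\theta E_6 - 3 E_6\,\theta E_4\right)$, so (ii) is precisely this same identity multiplied by $q^{-1}$. Hence it suffices to prove the scalar identity.

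Second, I would invoke the classical Ramanujan derivative formulas $\theta E_4 = \frac{E_2 E_4 - E_6}{3}$ and $\theta E_6 = \frac{E_2 E_6 - E_4^2}{2}$, where $E_2$ is the (quasimodular) weight-two Eisenstein series. Substituting these yields
$$
2 E_4\,\theta E_6 = E_2 E_4 E_6 - E_4^3, \qquad 3 E_6\,\theta E_4 = E_2 E_4 E_6 - E_6^2,
$$
and subtracting, the quasimodular term $E_2 E_4 E_6$ cancels, leaving $-(E_4^3 - E_6^2) = -1728\,\Delta$ by the definition of $\Delta$. This establishes the scalar identity and hence both (i) and (ii).

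Alternatively, and this is the route I would prefer if I wanted to stay inside the machinery already developed in the paper, I would argue structurally: by Proposition \ref{wron-2} (with $m=12$, $k=2$) together with (\ref{wron-5001}), and since $E_4^3$ and $E_6^2$ are linearly independent, $W_q(E_4^3, E_6^2)$ is a nonzero element of $S_{26}(\mathrm{SL}_2(\mathbb{Z}))$. As $\dim M_{26}=2$ forces $\dim S_{26}=1$, and $\Delta E_4^2 E_6$ is a nonzero weight-$26$ cusp form, we get $W_q(E_4^3, E_6^2) = c\,\Delta E_4^2 E_6$ for some $c\in\mathbb{C}^\times$. Comparing the coefficient of $q$, using $E_4^3 = 1 + 720q + \cdots$, $E_6^2 = 1 - 1008q + \cdots$ (so the Wronskian starts $-1728\,q + \cdots$) and $\Delta E_4^2 E_6 = q + \cdots$, pins down $c = -1728$.

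The computations are entirely routine; the only point requiring genuine care is the cancellation of the quasimodular $E_2$-contribution in the first approach, equivalently the fact that $2 E_4\,\theta E_6 - 3 E_6\,\theta E_4$ is truly modular (and cuspidal) rather than merely quasimodular, which is exactly what forces the answer into the one-dimensional space $S_{26}$. I would therefore present the theta-operator computation as the main argument, since it exhibits this cancellation explicitly and produces the constant $-1728$ directly, without a separate coefficient comparison.
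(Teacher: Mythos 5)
Your proposal is correct, and your preferred route is genuinely different from the paper's. The paper takes exactly what you call the ``alternative'' route: it expands the $2\times 2$ determinant to get $W_q\left(E_4^3, E_6^2\right)=E_4^2E_6\cdot q\cdot\left(2E_4\tfrac{d}{dq}E_6-3E_6\tfrac{d}{dq}E_4\right)$, invokes Proposition \ref{wron-2} to see that the Wronskian is a nonzero cusp form of weight $2(12+2-1)=26$, hence a nonzero multiple $\lambda\cdot\Delta E_4^2E_6$ of the generator of the one--dimensional space $S_{26}$, and then pins down $\lambda=-1728$ by comparing $q$--expansions --- precisely your coefficient computation $-1008-720=-1728$. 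Your main argument instead imports the Ramanujan derivative formulas $\theta E_4=\tfrac{1}{3}(E_2E_4-E_6)$ and $\theta E_6=\tfrac{1}{2}(E_2E_6-E_4^2)$ and observes that the quasimodular $E_2E_4E_6$ terms cancel, leaving $-(E_4^3-E_6^2)=-1728\,\Delta$ on the nose. That buys you the constant directly, with no appeal to the dimension of $S_{26}$ or to Proposition \ref{wron-2}, and it makes visible \emph{why} the combination is genuinely modular; the cost is that you lean on the Ramanujan identities, which are classical but are external to the paper and are themselves about as deep as the fact being proved. The paper's argument has the advantage of staying entirely inside its own machinery (Proposition \ref{wron-2} plus the structure theorem for $M_m$) at the price of one explicit coefficient check. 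Both are complete; either would be acceptable here.
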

\begin{proof} We compute
  \begin{align*}
    W_q\left(E_4, E_6\right) &=
\left|\begin{matrix} E^3_4 & E^2_6\\
q\frac{d}{dq} E^3_4   & q\frac{d}{dq}  E^2_6
\end{matrix}\right|\\
&=2 E^3_4 E_6 \cdot q\frac{d}{dq}E_6  -  3  E^2_4  E^2_6 \cdot q\frac{d}{dq} E_4\\
&= E^2_4 E_6 \cdot q\cdot \left(2 E_4  \frac{d}{dq}E_6  -  3  E_6 \frac{d}{dq} E_4 \right).
  \end{align*}
  
But we know that $W_q\left(E_4, E_6\right)$ is a cusp form of weigth $2\cdot (12+2-1)=26$. Thus, we must have that is equal to
$$
W_q\left(E_4, E_6\right) =\lambda\cdot \Delta\cdot E^2_4 E_6,
$$
for some non--zero constant $\lambda$.  This implies that

$$
2 E_4  \frac{d}{dq}E_6  -  3  E_6 \frac{d}{dq} E_4 =\lambda \cdot \Delta \cdot q^{-1}
$$
Considering explicit $q$--expansions, we find that
$$
\lambda=-1728.
$$
This proves both (i) and (ii). 
\end{proof}

\vskip .2in
The general case requires a different proof based on results of Section \ref{wron-cont}.

  \vskip .2in
  \begin{Prop} \label{lev0-4}  Assume that $m=12t$ for some $t\ge 1$. Then, we write the basis of $M_m$ as follows: $\left(E^3_4\right)^u \left(E^2_6\right)^{t-u}$, $0\le u \le t$.
    Then, we have the following 
    $$
    W_q\left(\left(E^3_4\right)^u \left(E^2_6\right)^{t-u}, \ \ 0\le u \le t\right)= \lambda \cdot\Delta^{\frac{t(t+1)}{2}}  E_4^{t(t+1)}E_6^{\frac{t(t+1)}{2}},
    $$
    for some non--zero constant $\lambda$. 
  \end{Prop}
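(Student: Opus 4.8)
The plan is to prove that the Wronskian $W_q$ and the proposed right--hand side $R:=\Delta^{t(t+1)/2}E_4^{t(t+1)}E_6^{t(t+1)/2}$ are two nonzero modular forms of the same weight having the same divisor; their quotient is then a weight--zero meromorphic function on $\mathfrak R_\Gamma\cong\mathbb P^1$ with trivial divisor, hence a nonzero constant $\lambda$. First I would note that the $t+1$ forms $(E_4^3)^u(E_6^2)^{t-u}$, $0\le u\le t$, are exactly the monomial basis of $M_{12t}$ in (\ref{lev0-1}) (take $\beta=2(t-u)$), so they are linearly independent and span $A=M_{12t}$, which has dimension $t+1$. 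By Proposition \ref{wron-2} their Wronskian lies in $S_{13t(t+1)}(\Gamma)$, of weight $13t(t+1)$, matching the weight of $R$; since $W_q$ and $W$ differ only by the nonzero constant in (\ref{wron-5001}), it suffices to compute $\mathrm{div}(W)$. By the valence formula Lemma \ref{prelim-1}(iv) its total degree is $13t(t+1)/12$, so I only need to locate the zeros.

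At the cusp, Theorem \ref{wron-6} gives $\nu_{\mathfrak a_\infty}(W)=\nu_{q-0}(f_0,\dots,f_t)$, the sum of the distinct $q$--orders attained on $A=M_{12t}$. By Lemma \ref{wron-cont-1}(iii) there are exactly $\dim A=t+1$ such orders; they are non--negative integers, and the largest is $t$, since a weight--$12t$ form vanishing to order $>t$ at $\infty$ would become, upon division by $\Delta^t$, a holomorphic weight--zero form vanishing at $\infty$, i.e. identically $0$. Thus the orders are forced to be $\{0,1,\dots,t\}$, giving $\nu_{\mathfrak a_\infty}(W)=t(t+1)/2$.

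At the order--$3$ elliptic point $\rho$ and the order--$2$ elliptic point $i$ I would use Proposition \ref{wron-cont-2}. Since $E_4$ has a simple $z$--zero at $\rho$ and none at $i$, while $E_6$ has a simple $z$--zero at $i$ and none at $\rho$, the basis $\{f_u\}$ has pairwise distinct $z$--orders $\{3u:0\le u\le t\}$ at $\rho$ and $\{2(t-u):0\le u\le t\}$ at $i$; a basis with pairwise distinct orders realizes precisely those orders on the whole span, so $\nu_{z-\rho}(f_0,\dots,f_t)=3t(t+1)/2$ and $\nu_{z-i}(f_0,\dots,f_t)=t(t+1)$. With $e_\rho=3$, $e_i=2$, $k=t+1$ and $k(k-1)/2=t(t+1)/2$, the formula yields $\nu_{\mathfrak a_\rho}(W)=t(t+1)/3$ and $\nu_{\mathfrak a_i}(W)=t(t+1)/4$.

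These three contributions already sum to $t(t+1)\left(\tfrac12+\tfrac13+\tfrac14\right)=13t(t+1)/12$, the full degree; since $\mathrm{div}(W)$ is effective (Lemma \ref{prelim-1}(i)) there are no further zeros, so $\mathrm{div}(W_q)=\tfrac{t(t+1)}{2}\mathfrak a_\infty+\tfrac{t(t+1)}{3}\mathfrak a_\rho+\tfrac{t(t+1)}{4}\mathfrak a_i$. Reading off $\mathrm{div}(R)$ from the simple zeros of $\Delta$, $E_4$, $E_6$ at $\infty$, $\rho$, $i$ shows the divisors coincide, so $W_q=\lambda R$ with $\lambda\neq 0$. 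The only nonroutine step --- and the place to be careful --- is asserting which orders are \emph{actually attained} on the full span $M_{12t}$ at each point, rather than mere lower bounds coming from the monomials; this rests on Lemma \ref{wron-cont-1}(iii) (a basis with pairwise distinct orders pins down the entire gap sequence), after which the degree count closes the argument without inspecting any other point.
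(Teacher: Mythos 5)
Your proof is correct and follows essentially the same route as the paper: Theorem \ref{wron-6} at the cusp, Proposition \ref{wron-cont-2} at the two elliptic points (using $\mathrm{div}(E_4)=\tfrac13\mathfrak a_\rho$, $\mathrm{div}(E_6)=\tfrac12\mathfrak a_i$), and a degree count to conclude the divisors agree. The only cosmetic difference is that the paper first divides by $\Delta^{t(t+1)/2}$ and identifies the resulting non-cuspidal form of weight $7t(t+1)$, whereas you compare the full divisors directly via the valence formula; your explicit argument that the attained $q$-orders at $\infty$ are exactly $\{0,\dots,t\}$ is in fact slightly more careful than the paper's bare assertion that a basis with leading terms $c_iq^i$, $0\le i\le t$, exists.
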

  \begin{proof} We can select another basis $f_0, \ldots, f_{t}$ of $M_m$ such that $f_i=c_i q^i+ d_i q^{i+1}+\cdots$, $0\le i \le t$, where $c_i\neq 0 , d_i, \ldots$ are some complex constants.
   An easy  application of  Theorem \ref{wron-6} gives 
$$
\nu_{\mathfrak a_\infty}\left(W_{q}\left(\left(E^3_4\right)^u \left(E^2_6\right)^{t-u}, \ \ 0\le u \le t\right)\right)= \frac{t(t+1)}{2}.
$$
But since $\mathrm{div}{(\Delta)}=\mathfrak a_\infty$, we obtain that
  $$
  f\overset{def}{=}W_{q}\left(\left(E^3_4\right)^u \left(E^2_6\right)^{t-u}, \ \ 0\le u \le t\right)/\Delta^{\frac{t(t+1)}{2}}
  $$
is a non--cuspidal modular form of weight 
  $$
  l=k\cdot \left(m+ k-1\right) -12\frac{t(t+1)}{2}=(t+1) (12t+t)-12t=7t(t+1).
  $$

  It remains to determine $f$. In order to do that,  we use  Proposition \ref{wron-cont-2}, and consider the order of vanishing of
  $W_q\left(\left(E^3_4\right)^u \left(E^2_6\right)^{t-u}, \ \ 0\le u \le t\right)$ at elliptic points $i$ and $e^{\pi i/3}=(1+i\sqrt{3})/2$, of order $2$ and $3$, respectively.
  We recall (see \cite{Muic2}, Lemma 4-1) that
  $$
  \mathrm{div}{(E_4)}=\frac13 \mathfrak a_{(1+i\sqrt{3})/2}.
  $$
  Similarly we show that
  $$
  \mathrm{div}{(E_6)}=\frac12 \mathfrak a_{i}.
  $$
  This implies that $\left(E^3_4\right)^u \left(E^2_6\right)^{t-u}$ has order $3u$ and $2(t-u)$ at $(1+i\sqrt{3})/2$ and $i$, respectively.
  Hence,  $W_q\left(\left(E^3_4\right)^u \left(E^2_6\right)^{t-u}, \ \ 0\le u \le t\right)$ has orders
  $$
  \nu_{\mathfrak a_{(1+i\sqrt{3})/2}}\left(W_q\left(\left(E^3_4\right)^u \left(E^2_6\right)^{t-u}, \ \ 0\le u \le t\right)\right)=\frac{1}{3} t(t+1),
  $$
  and 
  $$
  \nu_{\mathfrak a_{(1+i\sqrt{3})/2}}\left(W_q\left(\left(E^3_4\right)^u \left(E^2_6\right)^{t-u}, \ \ 0\le u \le t\right)\right)=\frac{1}{4} t(t+1).
  $$
  This implies the following: 
 $$
  \nu_{\mathfrak a_{(1+i\sqrt{3})/2}}\left(f\right)  =\frac{1}{3} \cdot t(t+1),
  $$
  and
   $$
  \nu_{\mathfrak a_i}\left(f\right)  =\frac{1}{4} \cdot t(t+1),
  $$
  Since, $f\in M_{7t(t+1)}$,   comparing divisors as before, we conclude that
  $$
  f=\lambda \cdot E_4^{t(t+1)}E_6^{\frac{t(t+1)}{2}},
  $$
  for some non--zero constant $\lambda$. 
  \end{proof}

  \vskip .2in
  We are not able to determine constant $\lambda$ in Proposition \ref{lev0-4} for all $t\ge 1$. It should come out of comparison of $q$--expansions
  of left and right sides of the identity in Proposition \ref{lev0-4}. For $t=1$, Proposition \ref{lev0-3} implies that $\lambda=-1728$. 
  Experiments in SAGE shows that $\lambda= -2 \cdot 1728^3$ for $t=2$, and  $\lambda= 12 \cdot 1728^6$ for $t=3$.


\begin{thebibliography}{999999}

    \bibitem{BKS} {\sc R.~Br\" oker, K.~Lauter, A.~V.~Sutherland,}{\em    Modular polynomials via isogeny volcanoes,} 
	Mathematics of Computation {\bf 81}, 1201--1231 ,  (2012)
	
	\bibitem{bnmjk} {\sc B.~Cho, N.~M.~Kim, J.~K.~Koo,}
	{\em Affine models of the modular curves $X(p)$ and its application,} 
	Ramanujan J. {\bf 24} , no. 2, 235–-257, (2011) 
	
	
	
	
	\bibitem{sgal}
	{\sc S.~Galbraith,} {\em  Equations for modular curves, Ph.D. thesis,} Oxford (1996)
	
	\bibitem{ishida}
	{\sc N.~Ishida,} {\em Generators and equations for modular function fields of principal congruence subgroups,} Acta Arithmetica, {\bf 85}  no 3, 197--207, (1998)
	

  \bibitem{FK}{\sc H.~M.~Farkas, I.~Kra,} {\em Riemann surfaces. Second edition. Graduate Texts in Mathematics,} {\bf 71.} Springer-Verlag, New York, 1992.
    

  	\bibitem{Kodrnja1}
	{\sc I.~ Kodrnja, }{\em On a simple model of $X_0(N)$},  Monatsh. Math. 186 (2018), no. 4, 653--661.

        
      \bibitem{Miyake}
        {\sc T.~Miyake,} {\em Modular forms,} Springer-Verlag, (2006)

  	\bibitem{ArSe}  {\sc  E.~Arbarello, E.~Sernesi,} {\sc Petri's approach to the study of the ideal associated to a special divisor,} Invent. Math., {\bf 49} (1978) pp. 99--119.
          
        
 \bibitem{Miranda} {\sc R.~Miranda,}{\em Algebraic Curves and Riemann Surfaces,} 
	Graduate Studies in Mathematics {\bf 5}, (1995)
	
	\bibitem{Muic} {\sc G.~Mui\' c}, 
	{\em Modular curves and bases for the spaces of cuspidal modular forms,}
	Ramanujan J. {\bf 27}, 181–-208,  (2012)
	
	
	\bibitem{Muic1} {\sc G.~Mui\' c,} {\em On embeddings of curves in projective spaces,} 
	Monatsh. Math. {\bf  Vol. 173, No. 2}, 239--256,  (2014)
	
	
	\bibitem{Muic2} {\sc G.~Mui\' c,} {\em On degrees and birationality of the maps $X_0(N)\rightarrow \mathbb P^2$ constructed via modular forms, } 
	  Monatsh. Math. {\bf  Vol. 180, No. 3}, 607--629,  (2016)

        \bibitem{MuicKodrnja} {\sc G.~Mui\' c, I.~Kodrnja} {\em On primitive elements of algebraic function fields and models of $X_0(N)$}, preprint (https://arxiv.org/abs/1805.02112)

        \bibitem{MuicMi} {\sc G.~Mui\' c, D.~Miko\v c,} {\em Birational maps of $X(1)$ into $\mathbb P^2$,} Glasnik Matematicki {\bf Vol. 48}, No. 2, 301--312,  (2013).


          \bibitem{mshi} {\sc M.~Shimura,}{\em Defining Equations of Modular Curves $X_{0}(N)$,} 
	Tokyo J. Math. {\bf Vol. 18,} No. 2, (1995)

        
          
       
\bibitem{neeman} {\sc A.~Neeman,}{\em  The distribution of Weierstrass points on a compact Riemann surface,}
Ann. Math. {\bf 120} (1984), 317--328.

\bibitem{Ogg} {\sc A.~P.~Ogg,} {\em Hyperelliptic modular curves.} Bull. Soc. Math. France 102 (1974), 449–462.
\bibitem{olsen} {B.~A.~Olsen,} {\em On Higher Order Weierstrass Points,} Ann.
Math.{\bf 95}  No. 2 (1972), 357--364.

\bibitem{ono} {\sc K.~Ono,} {\em The Web of Modularity: Arithmetic of the Coefficients of 
		Modular Forms and $q$--Series,} Conference Board of the Mathematical Sciences {\bf 102}, 
	American Mathematical Society (2004).


\bibitem{pete-1} {\sc H.~Petersson,} { \em Einheitliche Begr\" undung der Vollst\" andigkeitss\" atze 
f\" ur die Poincar\'eschen Reihen von 
reeller Dimension bei beliebigen Grenzkreisgruppen von erster Art, } Abh. Math. Sem. Hansischen 
Univ. {\bf 14} (1941), 22--60. 



\bibitem{pete-2} {\sc H.~Petersson,} { \em \" Uber Weierstrasspunkte und die expliziten Darstellungen der 
automorphen Formen von reeller Dimension,} Math. Z. {\bf 52} (1949), 32–-59.


\bibitem{roh} {\sc D.~Rohrlich,} {\em  
Weierstrass points and modular forms,} Illinois J. Math  
{\bf 29} (1985),  134-141


          
	\bibitem{SAGE} {\em Sage Mathematics Software (Version 8.8), The Sage Developers, 2019,
	  http://www.sagemath.org.}

          \bibitem{yy} {\sc Y.~Yifan,} {\em Defining equations of modular curves,} Advances in Mathematics {\bf 204}, 481-–508,  (2006)

\end{thebibliography}
\end{document}